\theoremstyle{plain}
\newtheorem{maintheorem}{Theorem}
\newtheorem{maincorollary}[maintheorem]{Corollary}
\newtheorem{theorem}{Theorem }[section]
\newtheorem{proposition}[theorem]{Proposition}
\newtheorem{lemma}[theorem]{Lemma}
\theoremstyle{definition}
\newtheorem{remark}[theorem]{Remark}
\newtheorem*{conjecture}{Conjecture}
\newtheorem{example}[theorem]{Example}
\newtheorem{definition}[theorem]{Definition}
\newtheorem*{Theorem B}{Theorem \cite{Bothe}}
\newcommand{\R}{\ensuremath{\mathbb{R}}}
\newcommand{\diam}{\operatorname{diam}}
\newcommand{\quand}{\quad\text{and}\quad}
\subjclass[2020]{Primary 37C45; Secondary 37D20}
\keywords{Solenoidal attractors, Hausdorff dimension, Transversality}
\begin{document}

\title[Dimension of intrinsically transversal attractors]{Dimension of a class of intrinsically transversal solenoidal attractors in high dimensions}

\author[R.~Bortolotti]{Ricardo Bortolotti }
\address{Ricardo Bortolotti - Departamento de Matemática (DMAT), Universidade Federal de Pernambuco-UFPE, Recife, Brazil}
\email{ricardo.bortolotti@ufpe.br}

\author[E.~Silva]{Eberson Ferreira da Silva }
\address{Eberson Ferreira da Silva - Departamento de Matemática, Universidade Federal Rural de Pernambuco-UFRPE, Recife, Brazil }
\email{eberson.silva@ufrpe.br}

\begin{abstract} 
We study the fractal dimension of a class of solenoidal attractors in dimensions greater or equal than 3, proving that if the contraction is sufficiently strong, the expansion is close to conformal and the attractor satisfy a geometrical condition of transversality between its components, then the Hausdorff and box-counting dimension of every stable section of the attractor have the same value, which corresponds to the zero of the topological pressure as in  Bowen's formula. {We also calculate the dimension of the attractor and prove that it is continuous in this class}.
\end{abstract}

\maketitle

\section{Introduction}
One of the most interesting problems in dynamical systems is the computation of the fractal dimension of hyperbolic sets, what provides information about the complexity of a system  originated from chaotic dynamical systems. 

This problem was originally solved in the setting of self-similar fractals by Moran \cite{moran} and for conformal repellers by Bowen \cite{bowen.conformal}.
Manning and McCluskey \cite{McCluskey-Manning} calculated the dimension of basic sets for $C^1$ Axiom A diffeomorphisms in surfaces,  proving that the dimension of the attractor is the sum of the dimension of its stable and unstable slices. This result was latter extended to conformal hyperbolic sets \cite{barreira}. In all these situations the dimensions can be obtained using thermodynamical formalism methods and corresponds to the zero of a topological pressure, this relation is called Bowen's formula.

In the non-conformal setting, Falconer \cite{falconer.affine} evaluated the dimension of generic self-affine fractals  (limit set of iterated function systems given by linear contractions)  and Simon-Solomyak \cite{simon.solomyak} extend this analysis for generic three-dimensional horseshoes. 

In general the problem of computing the dimension of a non-conformal hyperbolic set is not complete in higher dimensions.  Actually, the dimension of the hyperbolic set may not be even continuous with respect to the dynamics \cite{bdv}, so we  expect to calculate the dimension only for specific examples or under generic conditions.
For more on this problem, one can check the surveys \cite{simon.survey, barreira.gelfert}.

Further results \cite{hasselblat.schmelling, Bothe,K.Simon, reza} allowed to calculate the fractal dimension of three-dimensional solenoidal attractors that satisfy some geometrical transversality condition.

The classical solenoid attractor is given by a skew-product $T:S^{1}\times D^2\to S^{1}\times D^2 $ of the type
\begin{equation}\label{sol SW}
	T(\theta,x,y)=(2\theta \ \operatorname{mod}  1, \lambda_{1}(\theta)x+ f(\theta),\lambda_{2}(\theta)y+g(\theta)).
\end{equation}
The Smale-Williams solenoid corresponds to $\lambda_1(\theta) = \lambda_1$, $\lambda_2(\theta) = \lambda_2$, $f(\theta) = \epsilon \cos (2\pi \theta)$, $g(\theta) = \epsilon \sin (2\pi \theta)$, 
with  $0< \lambda_{1} \leq \lambda_{2} <\min\lbrace \epsilon, 1/2 \rbrace$.   
When $\Delta_T$ is conformal
($\lambda_{1}=\lambda_{2}$), it is easy to calculate  the Hausdorff dimension of the attractor $\Delta_T= \bigcap_{n\geq 0}T^{n}(S^{1}\times D^2)$, which is 
$  1+ \frac{\log 2}{-\log\lambda_2 }$ \cite{Pesin}.
%
%
When $\Delta_T$ is not conformal ($\lambda_{1}<\lambda_{2}$),  Simon \cite{K.Simon} proved that the dimension $\dim\Delta_T $ is also $1+ \frac{\log 2}{-\log\lambda_{2}}$ and  that the dimension of every stable-section $\Delta_{ T}(\theta):=\Delta_{T}\cap (\{\theta\}\times D^{2} )$  of the attractor is equals to 
$\frac{\log 2}{-\log\lambda_{2}},$
for this he used a result of Bothe \cite{Bothe} on attractors that satisfy
%
a geometric condition of transversality between its  components, called intrinsic transversality.

%
%
It is valid that \cite[Theorem A]{Bothe} if $\Delta_{T}$ is  intrinsically transversal and $\lambda_i$ is sufficiently small for $i=1,2$, then $\dim_{H}(\Delta_{T})=1+ \max\{p_{i}\}$
and 
$	\dim_{H}(\Delta_{T}\cap (\{t\}\times{D}^{2}))=p_{i}$ for all $t\in S^{1}, 
$
where  $p_{i}$ is the unique number such that the topological pressure $P(p_{i}\log \lambda_{i})$ is $0$ (Bowen's formula). 
As consequence,  it follows the dimension of the Smale-Williams solenoid attractor because it is intrinsically transversal \cite{K.Simon}.

B. Hasselblatt and J.Schmeling exhibit in \cite{hasselblat.schmelling} the following conjecture:
\begin{conjecture}\label{Conj.H-S}
	\textit{The fractal dimension of a hyperbolic set is the sum of those of its stable and unstable
		slices, where ``fractal" can mean either Hausdorff or upper box dimension.}
\end{conjecture}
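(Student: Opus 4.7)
The plan is to reduce the problem to analysis inside a single Markov rectangle $R$ by exploiting the local product structure of a locally maximal hyperbolic set $\Lambda$. Writing $\Lambda\cap R$ as (approximately) the product of a stable slice $W^s_{\text{loc}}(x)\cap\Lambda$ and an unstable slice $W^u_{\text{loc}}(x)\cap\Lambda$, identified through stable and unstable holonomies, one transfers the computation of $\dim\Lambda$ to that of two Cantor-like sets in transverse directions. Since dimension is countably stable and $\Lambda$ is covered by finitely many Markov rectangles, it suffices to prove the sum formula inside one such $R$.

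First, I would establish the lower bound $\dim_H\Lambda\ge \dim_H(W^s\cap\Lambda)+\dim_H(W^u\cap\Lambda)$ by a Marstrand-type product inequality. The idea is to pick ergodic measures $\mu^s,\mu^u$ whose pointwise dimensions approximate the respective slice dimensions from below (the existence of such measures follows from thermodynamic formalism applied to $T$ and $T^{-1}$ together with a Bowen-type formula, as in \cite{Bothe} in the solenoidal setting), and then transport the product measure $\mu^s\times\mu^u$ to $\Lambda\cap R$ through the product chart. Exact-dimensionality of the resulting measure then yields the desired lower bound via a Frostman-type argument.

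Second, for the upper bound I would cover the unstable slice by $N_u(r)$ balls of radius $r$ and each stable slice intersected with $\Lambda$ by at most $N_s(r)$ balls of comparable radius, and then deduce from the product chart a cover of $\Lambda\cap R$ by $N_u(r)\cdot N_s(r)$ sets of controlled diameter. This directly produces the box-dimension bound, and one passes to the Hausdorff estimate once the slice Hausdorff and box dimensions are shown to coincide --- precisely what the intrinsically transversal hypothesis considered in this paper is designed to secure, via the zero of the pressure $P(p\log\lambda)$.

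The hard part will be controlling the stable and unstable holonomies precisely enough to make the covering argument quantitative in the non-conformal setting. These holonomies are only H\"older continuous in general, and a H\"older map can strictly decrease Hausdorff dimension; this is exactly the mechanism behind the discontinuity examples in \cite{bdv}. To bypass this one needs a genuine geometric transversality between the pieces of $\Lambda$ --- the intrinsic transversality condition playing that role here --- which allows one to treat the holonomy, up to bounded distortion, as an affine-like identification between slices and thus to prevent the dimension from collapsing under the product chart. Without such a structural hypothesis the conjecture as stated may simply fail, so a general proof would have to isolate the correct genericity or transversality assumption under which the sum formula holds; this identification, rather than the symbolic thermodynamics, is the principal obstruction.
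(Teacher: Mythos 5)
This statement is not a theorem of the paper but an open conjecture of Hasselblatt and Schmeling, which the paper quotes precisely because it is \emph{not} known in general. The paper does not (and cannot) give a proof of it; what the paper actually proves is that the conjecture holds for the specific class $\mathcal{T}^*\cap\mathcal{E}^*$ of intrinsically transversal solenoids (Theorem A together with Corollary B). So there is nothing in the paper for your proposal to be checked against, and your writeup is more accurately read as a discussion of the obstruction than as a proof.

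That said, your discussion is essentially sound and your diagnosis of the obstruction is exactly the right one: stable/unstable holonomies of a hyperbolic set are in general only H\"older, and a H\"older map can strictly decrease Hausdorff dimension, which is precisely the mechanism exploited in the discontinuity examples of \cite{bdv}. This means the local-product-structure reduction you outline in your first two paragraphs does not close without a further geometric hypothesis that upgrades the relevant holonomies to (bi-)Lipschitz, at least on large subsets. You correctly identify intrinsic transversality as a candidate hypothesis that does this, and indeed that is what the paper does: the Geometric Lemma (Lemma 3.1) and the constructions of Section 4 produce sub-attractors $\Delta_T'(x')$ on which the unstable holonomy is locally bi-Lipschitz (Theorem \ref{TeoB2}); combined with the Frostman-type lower bound for the sub-attractor slices (Theorem \ref{TeoB1}) and a Fubini/Marstrand slicing inequality (Theorem \ref{Teo5.1}), this yields $\dim_H\Delta_T=l+d_0$ for this class. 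One place where your sketch diverges technically from the paper is the lower bound: you propose an exact-dimensional-measure/Frostman argument via a product measure $\mu^s\times\mu^u$, whereas the paper obtains the lower bound for the full attractor from the slice lower bound by the Fubini-type Theorem \ref{Teo5.1} (an integral inequality for $\mathcal{H}^{s-k}$ over parallel slices), with no construction of an exact-dimensional measure. Both routes are reasonable for the restricted class; neither addresses the general conjecture, and your closing paragraph correctly acknowledges that.
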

They prove in \cite{hasselblat.schmelling} that this Conjecture is valid for a class of three-dimensional solenoids  with weaker assumptions about contraction than those used in \cite{Bothe}. They also show that the conjecture is still valid omitting the transversality condition and assuming that the application $T$ is analytic. However, under this weaker condition of analyticity the dimension of the stable part may not be as given by Bowen's formula (as in Example \ref{non}).

More recently, the work of \cite{hasselblat.schmelling} was extended by R. Mohammadpour, F. Przytycki and M. Rams \cite{reza}   to more general three-dimensional solenoidals attractors. Under the assumptions of intrinsic transversality and
weaker conditions of contraction, they prove that the dimensions of the cross sections are equal to the same value $t_{0}$ that corresponds to the zero of the topological pressure: $P(t_0 \log \|DT\|)=0$ (Bowen's formula). They prove that the Conjecture 
is valid for this class of attractors and, even more, they prove  results regarding the packing measure and Hausdorff measure of the stable foliations with respect to $t_{0}$.

The results mentioned above are for 3-dimensional dynamics. In higher dimensions, there are more technical difficulties for the analysis of solenoidal attractors, for example, the intersection of the strong-stable projections of two  local unstable manifolds may not be just a single point but a submanifold.
A recent result in higher dimensions was obtained in \cite{Bortolotti-Silva} for intrinsically transversal solenoidal attractors using potential-theoretic methods, 
they proved that the dimension of the attractor associated with the dynamics is the sum of the stable  and the unstable slices, but the dimensions of the cross-sections were calculated  only for almost every point in the basis.

In this work, we study a class of solenoid attractors in dimensions greater than or equal to 3, also computing the dimension of each stable section. We use some techniques used in \cite{Bothe} together with ideas presented in \cite{Bortolotti-Silva} to make a higher dimensional  analysis of the projections of the unstable manifolds. The smallest singular value will be presented as an important tool in this analysis. 

The main results of this paper states that if the contraction is sufficiently strong, the expanding map on the basis is close to conformal and the attractor is intrinsically transversal, then we can calculate the dimension of every cross-section of the attractor as the zero given by the topological pressure of the geometrical potential (Bowen's formula) and thus obtain the calculation of the dimension of the attractor. { This also allows to verify that the dimension varies continuously inside this class of attractors.}

\subsection{The Results}
Consider $V=\mathbb{T}^{l}\times E\times F$ and an embedding $T: V\to V$  of class $C^r$, $r \geq 2$,   given by 
\begin{equation}
	T(x,y,z)=(\varphi(x),\nu(x,y), \psi(x,y,z)),
\end{equation}
where $\mathbb{T}^{l}=\mathbb{R}^{l} / \mathbb{Z}^{l}$ is the $l$-dimensional torus, $E\subset\mathbb{R}^{p}$ and $F\subset\mathbb{R}^{d}$ are convex open bounded sets.
Suppose that
$\varphi:\mathbb{T}^{l}\to \mathbb{T}^{l} $ is an expanding map of degree $N\geq 2$,
$\nu:\mathbb{T}^{l}\times \mathbb{R}^{p}\to E$ and $\psi:\mathbb{T}^{l}\times \mathbb{R}^{p} \times \mathbb{R}^{d}\to F$ are $C^{r}$ applications that are both contractions and $\psi$ contacts stronger, that is,
\begin{equation}\label{eq 2.2.1}
	0<\lVert( D_{z}\psi(x,y,z)) |_{Z} \rVert<\lVert( D_{y}\nu(x,y)) |_{Y} \rVert<{1}.
\end{equation}
for   $Y=\{0\}\times\mathbb{R}^{p}$, $Z=\{0\}\times \{0\} \times\mathbb{R}^{d}$ and every  $(x,y,z)$.
We also suppose that
$D_{y}\nu(x,y)|_{Y}$ is conformal, that is, \begin{equation}\label{eq 2.1.1}
	\lVert D_{y}\nu(x,y)|_{Y}  \rVert= \lVert( D_{y}\nu(x,y)|_{Y})^{-1}  \rVert^{-1}.
\end{equation}

Let $\mathcal{T}$ be the set of all $C^{r}$ embeddings  $T:V\to V$  described as above. For every $T\in \mathcal{T}$, the corresponding attractor is the set  $\Delta_{T}=  \bigcap_{n\geq 0}\overline{T^{n}(V)}$ and the stable section in $x\in\mathbb{T}^{l}$ is the set $\Delta_{T}(x)=\Delta_{T}\cap \mathbb{D}(x)$, where $\mathbb{D}(x)= \{x\}\times E\times F$.
%
%
%
We say that $B$ is a component of $\Delta_T$ if it is a $l$-dimensional submanifold   contained in $\Delta_T$. 
Denote $\rho(x,y,z)=(x,y)$ and  $\pi(x,y,z)=x$. 

\begin{definition}\label{def Transv}

The attractor $\Delta_T$ is said \textbf{intrinsically transversal} if for every ball $B \subset \mathbb{T}^{l}$ with radius smaller than $\frac{1}{2}$ and for every two distinct components $B_{1},B_{2}$ of $\Delta_{T}\cap \pi^{-1}(B)$, it is valid that the submanifolds  $\rho(B_{1})$ and $\rho(B_{2})$ are transversal.
\end{definition}

Remind that two submanifolds $S_1, S_2 \subset M$ are said transversal if $T_{p}S_1 + T_{p}S_2 = T_{p}M$ for every $p \in S_1 \cap S_2 $. 

The geometrical condition of intrinsic transversality means that the overlaps between $\rho$-projections of distinct components of the attractor do not occupy much space. 
If the $\rho$-projections of the components of the attractor were disjoint (they are not), it could be easier to prove that the same standard upper bound for the dimension of the attractor is also a lower bound. Intrinsic transversality allows to prove that the overlaps do not affect the calculation of the dimension.

Denote also  $\lambda(x,y):=\lVert D_{y}\nu(x,y) ) |_{Y} \rVert$, $\underline{\lambda}=\inf\{\lambda(x,y)\}$, $\overline{\lambda}=\sup\{\lambda(x,y) \}$, $\overline{\beta}=\sup\lVert D\varphi(x)\rVert$  and $\underline{\beta}= \sup\lVert(D\varphi(x))^{-1}\rVert^{-1}$.    
Consider the sets
\begin{equation}\label{Eq.8}
\mathcal{T}^{*}=\Bigg\{T\in\mathcal{T}: \overline{\lambda}< N^{-max\{l,2\}} 
\text{,} \quad
\overline{\lambda}< \Bigg( \overline{\beta}^{l}\underline{\beta}^{\frac{2\log(N)}{\log(\underline{\beta}\overline{\lambda}^{-1})}- l  }\Bigg)^{\frac{2\log(\underline{\lambda})}{\log(N)}   }\Bigg\}
\end{equation} 
\begin{equation}\label{close.conformal}
\text{and } \quad \mathcal{E}^{*}=\{T\in\mathcal{T}: \overline{\beta}^{l}< N^{\frac{1}{2}}\underline{\beta}^{p} \}. 
\end{equation}

In Section 2, we will define a semi-conjugation  $h:\Sigma_A\to \Delta_T$ of $T$  with a bi-lateral shift $\sigma$, the geometric potential $\phi:\Sigma_A\to \mathbb{R}$     by
$
\phi(\underline{\tilde{a}})=\log\lVert D_{y}\nu(h(\underline{\tilde{a}}))|_{Y }\rVert
$
and $d_0>0$ so that $P(\sigma,d_{0}\phi)=0$ (Bowen's equation).
This number $d_0$ is the expected value for the dimension of every stable section, and this is what we prove for the dynamics that we are considering.

\begin{maintheorem}\label{Teo 5.1}
If $T\in \mathcal{{T}}^{*} \cap \mathcal{E}^* $ and $\Delta_T$ is intrinsically transversal, then:
\begin{equation}\label{dimension}
	\operatorname{dim}_{B}\rho(\Delta_{T}(x))= \operatorname{dim}_{H}\rho(\Delta_{T}(x))=  \dim_{B}(\Delta_{T}(x))=\dim_{H}( \Delta_{T}(x)) = d_{0} 
\end{equation} 
for all $x\in\mathbb{T}^{l}$.	

\end{maintheorem}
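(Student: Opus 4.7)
The plan is to prove the two nontrivial bounds $\dim_B(\Delta_T(x))\le d_0$ and $\dim_H(\rho(\Delta_T(x)))\ge d_0$. Since $\rho$ is $1$-Lipschitz, Hausdorff dimension is monotone under Lipschitz maps, and $\dim_H\le\dim_B$ always, these two bounds force the four quantities in \eqref{dimension} to collapse to the common value $d_0$.

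For the upper bound I would use the symbolic coding $h:\Sigma_A\to\Delta_T$ to pull back the cross-section: for each admissible word $\underline a=(a_0,\dots,a_{n-1})$, the piece $h([\underline a])\cap\mathbb{D}(x)$ is, by the strict contraction \eqref{eq 2.2.1}, contained in a set of diameter at most $C\prod_{i=0}^{n-1}\lambda(T^i\cdot)$. Raising to power $s$ and summing over the cylinders produces the partition function of $s\phi$ on $\Sigma_A$, so Bowen's equation $P(\sigma,d_0\phi)=0$ immediately yields $\dim_B(\Delta_T(x))\le d_0$. This half of the argument does not use transversality.

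The substantive step is the lower bound for $\rho(\Delta_T(x))$. Fix $s<d_0$, so $P(\sigma,s\phi)>0$, let $\mu_s$ be the Gibbs measure of the potential $s\phi$, and push it down to $\eta_s=(\rho\circ h|_{\mathbb{D}(x)})_{*}\mu_s$ on $\rho(\Delta_T(x))$. I would prove a Frostman-type estimate $\eta_s(B(w,r))\lesssim r^s$ and then invoke the mass distribution principle. For a ball $B(w,r)$, choose the generation $n=n(w,r)$ so that a single cylinder $[\underline a^*]$ has $\rho\circ h$-image of diameter comparable to $r$, and estimate the number of other cylinders $[\underline b]$ at generation $n$ with $\rho(h([\underline b]))\cap B(w,r)\neq\emptyset$. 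Distinct such images come from distinct components of $\Delta_T\cap\pi^{-1}(B)$ over a small basis ball $B$, and by Definition~\ref{def Transv} their $\rho$-projections are transverse $C^1$-graphs in $\mathbb{T}^l\times E$. A volume/angle argument in dimension $l+p$ converts this transversality into a polynomial-in-$r^{-1}$ bound on the count; multiplying by the Gibbs weight $\mu_s([\underline b])$ and using the choice of $n(r)$ yields the required $\eta_s(B(w,r))\lesssim r^s$.

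The hard part, compared with the two-dimensional unstable-leaf case treated in \cite{Bothe,hasselblat.schmelling,reza}, is that the transverse intersections are now submanifolds rather than points, so the overlap count has to absorb the non-conformal distortion of $D\varphi^n$ along the basis. The two inequalities defining $\mathcal{T}^*$ in \eqref{Eq.8} are precisely the thresholds at which the ratio $\overline\beta^n/\underline\beta^n$ of extreme singular values of $D\varphi^n$ is dominated by the pressure gap at the critical scale $n(r)$, and the near-conformality condition $\mathcal{E}^*$ in \eqref{close.conformal} keeps the smallest singular value of $D\varphi^n$ close enough to the largest to play the role of a uniform contraction rate in the argument. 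This singular-value comparison, already used in \cite{Bortolotti-Silva}, is what allows the Frostman bound to be verified uniformly in $B(w,r)$, and it is where I expect the bulk of the technical work to lie.
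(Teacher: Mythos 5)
Your decomposition into an upper bound $\dim_B(\Delta_T(x))\leq d_0$ (by covering with symbolic cylinders and summing contraction rates against the pressure) and a lower bound $\dim_H(\rho(\Delta_T(x)))\geq d_0$ is the right architecture, and the upper bound half is essentially what the paper cites from \cite{Bortolotti-Silva} (Proposition~\ref{upper.bound}). The lower-bound strategy, however, has a genuine gap.

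You propose to push a Gibbs measure $\mu_s$ for $s\phi$ ($s<d_0$) forward onto $\rho(\Delta_T(x))$ and verify a Frostman estimate $\eta_s(B(w,r))\lesssim r^{s}$, controlling the cylinder count entering $B(w,r)$ by transversality. This is essentially the potential-theoretic scheme of \cite{Bortolotti-Silva}, and the reason that scheme only gives the conclusion for \emph{Lebesgue almost every} $x$ is exactly the obstruction you would hit here: intrinsic transversality constrains the $\rho$-projections of two components as graphs over a basis ball $B\subset\mathbb{T}^{l}$, which means their overlap projects to a thin ($\omega^{k}$-tubular, codimension-$p$) subset of the basis --- but it does not prevent a \emph{particular} fibre $\{x\}$ from sitting inside that thin set for many pairs of components simultaneously. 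For such a bad $x$, the number of generation-$n$ cylinders whose $\rho$-image lands in $B(w,r)$ is not polynomially bounded in $r^{-1}$, the Frostman inequality fails, and one only recovers the estimate after integrating over $x$. Since the theorem asserts the equality for \emph{every} $x\in\mathbb{T}^{l}$, a pointwise Frostman estimate cannot be closed this way.

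The paper's route around this is the content you are missing. The Geometric Lemma (Lemma~\ref{LG}) uses the tubular-neighbourhood bound from transversality, together with the $\mathcal{T}^{*}$ and $\mathcal{E}^{*}$ inequalities, to show that one can discard at most $N^{\mu n}$ of the $\sim N^{n}$ cylinders of $\mathcal{T}_n$ and be left with a compact $\mathbf{F}$ over which distinct inverse branches have \emph{disjoint} $\rho$-images. Iterating this produces a sub-Cantor set $C'(m)\subset\mathbb{T}^{l}$ and a sub-attractor $\Delta_T'$ for which $\rho|_{\Delta_T'(x')}$ is injective and the $\rho$-images $D_{x',\underline{a}}$ satisfy a Moran-type separation (Propositions~\ref{Prop 5.6},~\ref{Prop 5.7}). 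A Moran argument (Lemma~\ref{Lema 5.6}) then gives $\dim_H\rho(\Delta_T'(x'))\geq\mathbf{d}^{**}(x',m)$, which converges to $d_0$. Crucially, this only treats $x'$ in the special set $C'(m)$. The final step --- the one with no counterpart in your proposal --- is Theorem~\ref{TeoB2}: the unstable holonomy $h:\rho(\Delta_T'(x'))\to\rho(\Delta_T(x))$ is piecewise bi-Lipschitz (using again the separation $\delta_1$ coming from the Geometric Lemma, plus bounded distortion of $\lambda$). This transports the lower bound from the good $x'\in C'(m)$ to \emph{arbitrary} $x\in\mathbb{T}^{l}$. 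Without some analogue of this holonomy transport, your argument at best reproduces the almost-everywhere conclusion already known from \cite{Bortolotti-Silva}.
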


Above we denote $\dim_H(X)$ and $\dim_B(X)$ for the Hausdorff and box dimension of the set $X$ (see Section \ref{Sec2} for the definitions).

As consequence, it follows the dimension of the attractor.

\begin{maincorollary}
If $T\in \mathcal{{T}}^{*} \cap \mathcal{E}^* $ and $\Delta_T$ is intrinsically transversal, then: \begin{equation}\label{cor}
	\dim_{B}(\Delta_{T})=\operatorname{dim}_{H}( \Delta_{T})= \ \operatorname{dim}_{B}(\rho(\Delta_T)) = \operatorname{dim}_{H}(\rho(\Delta_T)) =l+d_{0}.\end{equation}
\end{maincorollary}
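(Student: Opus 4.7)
The plan is to deduce the corollary from Theorem A by fibering $\Delta_T$ over the base $\mathbb{T}^l$ via the Lipschitz projection $\pi(x,y,z)=x$. Since $\dim_H\le \dim_B$ always, and since $\rho$ is Lipschitz (so the dimensions of $\rho(\Delta_T)$ are bounded above by those of $\Delta_T$), it suffices to establish the single upper bound $\dim_B(\Delta_T)\le l+d_0$ and the single lower bound $\dim_H(\rho(\Delta_T))\ge l+d_0$; the four quantities in \eqref{cor} will then be squeezed between them.

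For the upper bound, I would cover $\mathbb{T}^l$ by $O(\epsilon^{-l})$ balls of radius $\epsilon$ and, above each such ball, use Theorem A together with a uniform covering estimate for the stable sections of the form $N(\Delta_T(x),\epsilon)\le C\epsilon^{-(d_0+\delta)}$ with $C$ independent of $x$. Multiplying these covers gives $N(\Delta_T,\epsilon)\le C'\epsilon^{-(l+d_0+\delta)}$, hence $\dim_B(\Delta_T)\le l+d_0$ upon letting $\delta\to 0$. To obtain the needed uniformity I expect to argue through the symbolic description $h\colon\Sigma_A\to\Delta_T$ of Section 2, where cylinders of $\Sigma_A$ provide natural covers of each stable section whose cardinalities are controlled by Birkhoff sums of $d_0\phi$; since $\Sigma_A$ is compact, $\phi$ is continuous, and $P(\sigma,d_0\phi)=0$, distortion and pressure estimates yield a constant $C$ independent of the base point.

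For the lower bound, I would invoke Marstrand's slicing theorem applied to the Lipschitz map $\pi\colon\rho(\Delta_T)\to\mathbb{T}^l$. This map is surjective because $\varphi$ is an expanding map of degree $N\ge 2$ on $\mathbb{T}^l$ and $\Delta_T$ is $T$-invariant, so $\pi(\Delta_T)=\mathbb{T}^l$. By Theorem A every fiber satisfies $\dim_H(\rho(\Delta_T(x)))=d_0$, and Marstrand's theorem then gives $\dim_H(\rho(\Delta_T))\ge l+d_0$. This closes the chain of inequalities and yields the common value $l+d_0$ for all four dimensions in \eqref{cor}.

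The main obstacle is precisely the uniform covering estimate for $\Delta_T(x)$ discussed above: Theorem A provides the box dimension pointwise in $x$, but to couple it with the cover of the base one needs a covering number that grows at rate $\epsilon^{-(d_0+\delta)}$ with a prefactor independent of $x$. As sketched, this should follow from the thermodynamic formalism underlying Theorem A, where the defining Bowen equation $P(\sigma,d_0\phi)=0$ translates, via $h$, into uniform upper bounds on the cardinality of the natural geometric covers of every stable section at each scale.
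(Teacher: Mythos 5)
Your proposal is essentially correct and follows the same route as the paper for the part that actually requires proof. The paper establishes the lower bound $\dim_H(\Delta_T)\ge l+d_0$ exactly as you propose: via a Marstrand-type slicing theorem (stated and proved there as Theorem~\ref{Teo5.1}), which asserts that if $\dim_H(F\cap L_x)\ge t$ for a.e. $x$ in a positive-measure set $E\subset\R^k$ then $\dim_H(F)\ge t+\dim_H(E)$; Theorem A supplies the hypothesis $\dim_H(\Delta_T(x))\ge d_0$ for \emph{every} $x$, so the slicing theorem applies with $t=d_0$, $k=l$. The one substantive difference is that you invoke Marstrand's theorem as a black box, whereas the paper supplies an elementary Fubini-type proof of the precise statement it needs (integrating $\mathcal{H}^{s-k}$ over slices against Lebesgue measure on the base). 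Your discussion of the upper bound, and in particular your concern about obtaining a uniform-in-$x$ covering constant, is moot for this corollary: the bound $\dim_H(\Delta_T)\le\overline{\dim}_B(\Delta_T)\le l+d_0$ is already recorded in Proposition~\ref{upper.bound} (quoting an earlier result), so the proof of Corollary B in the paper consists solely of the slicing argument plus the chain of trivial inequalities you correctly identify ($\dim_H\le\dim_B$, $\rho$ Lipschitz).
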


The value $l+d_0$ for the dimension of the attractor $\Delta_T$ is a consequence of the value $d_0$ for $\Delta_T(x)$ for every $x \in \mathbb{T}^l$. It 
follows from an auxiliary result (Theorem \ref{Teo5.1}) that we will prove in Section 4.

{Denoting $d_0= d_0(T)$, we can notice that this value is continuous with respect to $T$. For this, let $\mathcal{U}$ be the set of the mappings $T=T(\varphi, \nu, \psi)$ satisfying \eqref{eq 2.2.1}, \eqref{eq 2.1.1}, 
$T\in \mathcal{{T}}^{*} \cap \mathcal{E}^* $ and such that $\Delta_T$ is intrinsically transversal.}
\begin{maincorollary}
{	The mapping $d_0: \mathcal{U} \to \R$ is continuous. So it follows that the Hausdorff and box dimensions of the attractor and its stable slices 		
	are continuous with respect to $T$.}

{	Moreover, if $p=1$ then $\mathcal{U}$ is $C^1$-open.}
\end{maincorollary}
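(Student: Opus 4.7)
The plan is to derive continuity of $d_0$ from continuity of the topological pressure. Recall that $d_0=d_0(T)$ is the unique solution of $P(\sigma,d_0(T)\phi_T)=0$ with $\phi_T(\underline{\tilde{a}})=\log\lVert D_y\nu_T(h_T(\underline{\tilde{a}}))|_Y\rVert$. My first observation is that the degree $N$ of the expanding map $\varphi_T:\T^l\to\T^l$ is a topological invariant preserved under $C^0$-small perturbations, so on a $C^1$-neighborhood of a fixed $T_0\in\mathcal{U}$ the subshift of finite type $(\Sigma_A,\sigma)$ from Section~2 can be taken independent of $T$. Standard hyperbolic theory then gives that the semi-conjugation $h_T$ depends continuously on $T$ in $C^0$ (unstable leaves and their coding vary continuously with the dynamics), and $D_y\nu_T$ varies continuously in $C^0$, so $\phi_T$ is a continuous function of $T$ valued in $C(\Sigma_A)$ with the sup-norm.

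Now the topological pressure is $1$-Lipschitz in the sup-norm of the potential, and the bound $\phi_T\le\log\overline{\lambda}<0$ (uniformly on $\Sigma_A$) makes $t\mapsto P(\sigma,t\phi_T)$ strictly decreasing and proper. Hence $(t,T)\mapsto P(\sigma,t\phi_T)$ is jointly continuous with a unique zero $d_0(T)>0$ in $t$, and a routine monotonicity/implicit-function argument yields continuity of $T\mapsto d_0(T)$ at every $T_0\in\mathcal{U}$. Combining this with Theorem~\ref{Teo 5.1} and its corollary, which identify $\dim_{H}(\Delta_T(x))=\dim_{B}(\Delta_T(x))=d_0(T)$ and $\dim_{H}(\Delta_T)=\dim_{B}(\Delta_T)=l+d_0(T)$ for every $T\in\mathcal{U}$, gives the desired continuity of all the dimensions.

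For $C^1$-openness of $\mathcal{U}$ when $p=1$, I would first note that each defining condition of $\mathcal{U}$ is a $C^1$-open strict inequality except the conformality condition \eqref{eq 2.1.1}, which is closed. But when $p=1$ the space $Y=\{0\}\times\R$ is one-dimensional, so $D_y\nu(x,y)|_Y$ is a scalar and conformality is automatic; thus \eqref{eq 2.1.1} imposes no constraint. It remains to verify that intrinsic transversality is $C^1$-open. For this I would upgrade Definition~\ref{def Transv} to a quantitative statement: for $T\in\mathcal{U}$ there is $\delta>0$ such that for every ball $B\subset\T^l$ of radius $<1/2$ and every two distinct components $B_1,B_2$ of $\Delta_T\cap\pi^{-1}(B)$, the tangent planes of $\rho(B_1)$ and $\rho(B_2)$ at any common point make an angle at least $\delta$. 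This uniform lower bound should follow from compactness together with the symbolic description of components in terms of finite cylinders of $\Sigma_A$: refinements of a cylinder stay $C^1$-close to the parent component, so it suffices to check transversality on finitely many ``types'' of pairs, and the $\rho$-projections are graphs with uniform $C^1$ bounds. These projections vary $C^1$-continuously with $T$ (as unstable manifolds do), so the angle bound survives as $\delta/2$ on a small $C^1$-neighborhood of $T_0$, yielding $C^1$-openness of $\mathcal{U}$.

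The main obstacle I anticipate is the quantitative, uniform-in-$B$ transversality bound, because the number of components of $\Delta_T\cap\pi^{-1}(B)$ grows without bound as $\diam B\to 0$. The self-similar structure inherited from the symbolic dynamics, together with uniform $C^1$ bounds on unstable leaves, is what should reduce the verification to a compact family of pairs of projections, after which $C^1$-continuity of the minimal angle as a function of $T$ finishes the argument.
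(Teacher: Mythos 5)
Your continuity argument is essentially the paper's own: both proofs fix the symbolic coding for a $C^1$-neighborhood (the paper invokes structural stability, you invoke topological invariance of the degree, which is the same point), show that the semi-conjugation $h_T$ and hence the potential $\phi_T$ vary continuously in $C^0$, use continuity (Lipschitz-ness) of pressure in the potential, and conclude via uniqueness of the zero of $t \mapsto P(\sigma, t\phi_T)$. The paper's uniqueness step is run as a sequential compactness argument on $d_0(T_n)\in[0,p]$, while you phrase it as a monotonicity/implicit-function argument; these are equivalent. Your observation that $\phi_T\le\log\overline\lambda<0$ gives strict monotonicity and properness is exactly what makes both versions work.

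The one genuine difference concerns the $C^1$-openness of $\mathcal{U}$ when $p=1$. You correctly note that conformality \eqref{eq 2.1.1} is automatic for scalar $D_y\nu|_Y$ and that the remaining inequality conditions are open, and you then propose to \emph{prove} that intrinsic transversality is $C^1$-open via a quantitative compactness argument (uniform angle bound over a compact family of pairs of $\rho$-projected unstable leaves). The paper instead simply cites \cite{Bortolotti-Silva} for the $C^1$-openness of intrinsic transversality and does not re-prove it. Your sketch is in the right spirit — it is essentially the compactness argument behind Proposition \ref{Lema 5.2} reused to get a uniform transversality constant $c_1$ persisting under $C^1$-small perturbation — but it is only an outline of a non-trivial fact that the paper treats as a black box; if you pursued it you would need to handle the passage from infinite words $\underline{a}_\infty,\underline{b}_\infty$ to finite cylinders carefully, exactly as in that proposition. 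Everything else matches the paper's route.
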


The condition
$T \in \mathcal{T}^{*}$ means that the contraction is sufficiently strong and $T \in \mathcal{E}^*$ means that the expanding map is close to conformal. To illustrate these conditions, let us give three class of examples.

\begin{example}
Consider $T:\mathbb{T}^l \times \R^p \times \R^d \to \mathbb{T}^l \times \R^p \times \R^d $ given by $$T(x,y) = (E(x), C(y) + f(x), \psi(x,y,z)  ), $$ where $l\leq p < 2l$, $E$ is an  expanding linear endomorphism that is multiple of the identity, $C$ is a conformal linear isomorphism of $\R^p$ with $\lambda=\|C\|$ sufficiently small so that \eqref{Eq.8} is valid for $\lambda=\overline{\lambda}=\underline{\lambda}$ and $\psi(x,y,\cdot)$ is another contraction with $\| D_z\psi \| < \lambda$. 
So $T\in \mathcal{{T}}^{*} \cap \mathcal{E}^* $. 
\end{example}

\begin{example}\label{non}
Consider $T:\mathbb{T}^l \times \R^p \to \mathbb{T}^l \times \R^p $ given by $$T(x,y) = (\varphi(x), C(y) + f(\varphi(x)) - C(f(x)),$$ where $\varphi$ is an expanding map, $C(x,\dot)$ is a contraction and  $f: \mathbb{T}^l \to \R^p$ is analytic (or just Lipschitz). It follows  that $\Delta_T = \operatorname{graph}(f)$, so its dimension is $l$ and each $\Delta_T(x)$ consists in only one point, then $\dim_H(\Delta_T(x))=0$. In this case we have that $\Delta_T$ is not intrinsically tranversal.

This example shows that the expected dimension given by Bowen's formula is not valid in general, although the Conjecture of Hasselblat-Schmelling is still valid in this case, since $\dim_H \Delta_T = l = l + \dim_H\Delta_T(x)$.
\end{example}


The sets $\mathcal{T}^*$ and  $\mathcal{E}^*$ are open in the $C^1$-topology.  The property of intrinsic transversality is also  $C^1$-open \cite{Bortolotti-Silva}.   The product of intrinsically transversal attractors is also intrinsically transversal \cite{Bortolotti-Silva}.
Intrinsic transversality is common for attractors with strong contraction: in dimension 3 it is $C^1$-generic when $\overline{\lambda} < N^{-2}$ \cite[Theorem B]{Bothe}.  We believe that it is also $C^1$-generic in any dimension under the condition of $\overline{\lambda} $ small enough.

{Putting togheter these results we  give an example as follows.}

\begin{example}
Consider three-dimensional solenoidal attractors given by mappings
\begin{align*}
	T_i(x,y,z) &= (\mu_i x, \lambda y + a_i(x), \tilde \lambda_i z + b_i(x) ) \quad \quad i=1, \cdots, k
\end{align*}
that are intrinsically transversal attractors, where $\mu_i \geq 2$ are integers, $\lambda \in (0,1)$ and $ \tilde \lambda_i <  \lambda  $ for every $i=1,\cdots,k$.

We have that  $T \in \mathcal{E}^*$ if $\max \mu_i^k < (\mu_1\cdots \mu_k)^{1/2} \min \mu_i^k$ (which is valid if $\mu_1=\cdots =\mu_k$), we have that $T \in \mathcal{T}^*$ if we  consider $\lambda$ sufficiently small and we can notice that there exists such functions $a_i$ so that the attractors are intrinsically transversals by \cite[Theorem B]{Bothe}.

So $T= T_1 \times \cdots \times  T_k$ and its perturbations in $\mathcal{U}_T$ are mappings satisfying the hypothesis of Theorem A and Corollary B ($T\in \mathcal{{T}}^{*} \cap \mathcal{E}^* $ and $\Delta_T$ is intrinsically transversal).	
\end{example}

{This paper is organized as follows. In Section 2 we give the definitions of Hausdorff and box dimensions, topological pressure, the codification of the dynamic and the expected value $d_0$ for the dimension. There we also define the smallest singular value, which will allow to analyze the structure of the attractor close to transversal intersections of its components.
In Section 3 we prove the Geometric Lemma, which is the main step in the proof of Theorem A.
In Section 4 we consider subsets of the attractor for which we can calculate the lower bound for its dimension and we analyze the unstable holonomies for points in this subset, proving that they are bi-Lipschitz, this will allow to compute the dimension for every stable section and of the attractor. Finally we notice that the dimension is continuous in this class of solenoids.}



\section{Preliminaries}\label{Sec2}
\subsection{Box counting dimension and Hausdorff dimension}
The notions of fractal dimension that we will use in this work will be box counting dimension and Hausdorff dimension. We will give their definitions and suggest reference \cite{Falconer} for more details.

Consider $X$ a metric space. The \textit{lower} and \textit{upper box-counting dimensions} of $M\subset X$ are, respectively \begin{equation*}
\underline{\operatorname{dim}}_{B}M:=\displaystyle\liminf_{\epsilon\to 0}\dfrac{\log N(M,\epsilon)}{-\log\epsilon}
\quand	\overline{\operatorname{dim}}_{B}M:=\displaystyle\limsup_{\epsilon\to 0}\dfrac{\log N(M,\epsilon)}{-\log\epsilon}.
\end{equation*}
where $N(M,\epsilon)$ denotes the smallest number of balls of radius $\epsilon$ that are needed to cover the set $M$. When $\underline{\operatorname{dim}}_{B}M=\overline{\operatorname{dim}}_{B}M$, the \textbf{box-counting dimension} of $M$ is the limit 
\begin{equation*}
{\operatorname{dim}}_{B}M=\displaystyle\lim_{\epsilon\to 0}\dfrac{\log N(M,\epsilon)}{-\log\epsilon}.
\end{equation*}

The diameter of  $M\subset X$ is given by
$	\diam M=\sup\{\operatorname{d}(p,q); p,q\in M\}$
and the diameter of a collection $\mathcal{M}$ of subsets of $X$ by \begin{equation*}
\diam\mathcal{M}=\sup\{\diam M: M\in \mathcal{M}\}.
\end{equation*} Given $t\geq 0$
{ and a cover $\mathcal{M}$ of $M$},
we define
\begin{equation}
H_t(\mathcal{M}):= \sum_{M\in \mathcal{M}}(\diam M)^{t}
\end{equation}
The $t$\textbf{-dimensional Hausdorff measure} of $M$ is defined  by \begin{equation*}
\mathcal{H}^{t}(M)=\lim_{\epsilon\to 0}\mathcal{H}_{\epsilon}^{t}(M),
\end{equation*} where $$\mathcal{H}^{t}_{\epsilon}(M)=\inf_{\mathcal{M}} {H_t(\mathcal{M})}$$ and the infimumm is taken over all  collections $\mathcal{M}$ that cover the set $M$ and such that  $\diam\mathcal{M}\leq \epsilon$. 

There exists a number $d$ such that $\mathcal{H}^{t}(M)=\infty$ for all $t<d$ and $\mathcal{H}^{t}(M)=0$ for all $t>d$, this number $d$ is the \textbf{Hausdorff dimension} of $M$:
$$\operatorname{dim}_{H}(M)=\inf\{t\geq 0 ; \mathcal{H}^{t}(M)=0 \}=\sup\{t\geq 0 ;\mathcal{H}^{t}(M)=\infty\}.$$
It is possible to show that the following inequality holds \begin{equation}\label{eq.B-H}
\operatorname{dim}_{H}X\leq \underline{\dim}_{B}X\leq \overline{\dim}_{B}X.
\end{equation} In general these inequalities in (\ref{eq.B-H}) can be strict. Equality only occurs in some specific situations.

\subsection{Codification of the dynamics}

First we consider a codification for the expanding map $\varphi$.
Consider $\mathcal{R}=\{ R_1, R_2, \dots , R_{s}\}$ a Markov partition of $\mathbb{T}^{l}$ with respect to $\varphi$ whose diameter $\gamma=\operatorname{diam}(\mathcal{R})$ satisfies  $0<\gamma< \min\{ \frac{1}{2},\alpha\}$, where $\alpha$ is an expansivity constant of $\varphi$ and such that the contractive inverse branches $\varphi^{-1}:\mathbf{B}(x,\gamma)\to \mathbb{T}^{l}$ are well defined in balls of radius $\gamma$.

Consider $\hat{I}=\{1,2,...,s\}$ and $\hat{I}^{n}$ the set of words with letter in $I$ and length  $n$, $1 \leq n \leq \infty$. Define the subset of admissible words $I_{n}:=\{\underline{a}=(a_1,a_2,...,a_n)\in \hat{I}^{n}, \varphi(R_{a_{j}})\cap R_{a_{j+1}}\neq\emptyset \ \forall j=1,...,n-1 \}$.

Now we consider a codification for $T$. 	
For each $\underline{a}=(a_1,...,a_n)\in I_n$, the set $\mathbf{T}_{\underline{a}}^{'}=\cap_{j\geq 0}^{n-1}\varphi^{-j}(R_{a_{j+1}})$  is non-empty if and only if $\underline{a}\in I_n $. We  also denote $\mathbf{T}_{\underline{a}}=\overline{\mathbf{T}_{\underline{a}}^{'}}$.
\begin{definition}
Given $B\subset\mathbb{T}^{l}$, the \textbf{components of $T^{k}(V)\cap \pi^{-1}(B)$} are the sets $T^l(\mathbf{T}_{\underline{a}}\times E \times F) \cap \pi^{-1}(B)$, for  $\underline{a} \in I_k$  such that  $\varphi^{k}(\mathbf{T}_{\underline{a}})\cap B\neq\emptyset$. 
\end{definition}


Consider the mapping $
\pi_{m}:\cup_{n\geq m}I_n \to  I_m$, $ 
\pi_{m}(a_1,a_2,...)= (a_1,a_2,...,a_{m})$.
%
Define also $\tau:I_{\infty}\longrightarrow \mathbb{T}^{l}$ by $\tau(a_1,a_2,...)=\cap_{j\geq 0}\varphi^{-j}(R_{a_{j+1}}), $
which is injective and conjugates $\varphi$ with the {one-sided shift $\sigma^+$} in $I_{\infty}$.

For fixed $n$, the family $\mathcal{T}_{n} := \{ \mathbf{T}_{\underline{a}},   \underline{a}\in I_n\}$ is another Markov partition of $\mathbb{T}^{l}$ and  
the diameter of each element of $\mathcal{T}_{n}$ satisfies $K_{0}\overline{\beta}^{(1-n)}\leq \diam \mathbf{T}_{\underline{a}}^{'} \leq \underline{\beta}^{(1-n)}\gamma,$ where $K_{0}=\min_{1\leq j\leq s}\diam(R_{j})$. 



For each $x\in \mathbb{T}^{l}$, fix a letter  $s(x) $  in $I$ such that $x\in{\overline{R}}_{s(x)}$, 
define the sets
$I^{n}(x):=\{\underline{a}\in I_n:\varphi^{n}(\mathbf{T}_{\underline{a}})\cap {\overline{R}}_{s(x)}\neq \emptyset \}$
and  $I^{\infty}(x)=\{\underline{a}\in I_{\infty}:[\underline{a}]_{n}\in I^{i}(x), \  \forall n \geq 1 \}$. 
Given $\underline{a}\in I^{n}(x)$, denote $\underline{a}(x) $ the point $x_0$ of $\mathbf{T}_{\underline{a}}$ such that  $\varphi^{n}(x_0)=x$, denote also  $[\underline{a}]_{j}$ the truncation of $\underline{a}$ of length $j$.
For $\underline{c}\in I_m$, denote $I^{n}(\underline{c})=\{\underline{a}\in I_n:\varphi^{n}(\mathbf{T}_{\underline{a}})\cap \mathbf{T}_{\underline{c}}\neq \emptyset \}$, $	I^{\infty}(\underline{c})=\{\underline{a}\in I_{\infty}: [\underline{a}]_{n}\in I^{n}(\underline{c}), \ \forall \ n\geq 1 \}$ {and $\tilde M:= \bigcup_{\underline{a} \in I_1} \mathbb{D}(\underline{a}) \times I^{\infty}(\underline{a})$ }.

Given $\underline{a} \in I_n$, the image of $\mathbf{T}_{\underline{a}}\times \{0\}\times\{0\}$ by $T^{n}$ is the graph of the application  $S(\cdot,\underline{a}):\mathbb{D}(\underline{a})\subset\mathbb{T}^{l}\to \mathbb{R}^{p}\times\mathbb{R}^{d}$ given by
\begin{equation}
S(x,\underline{a})=\big(\nu_{[\underline{a}]_{1}(x)} \circ\nu_{[\underline{a}]_{2}(x)}\circ...\circ\nu_{\underline{a}(x)}(0) , \psi_{[\underline{a}]_{1}(x)} \circ\psi_{[\underline{a}]_{2}(x)}\circ...\circ\psi_{\underline{a}(x)}(0,0) \big),
\end{equation}
where $\psi(x,y)=\psi_{x}(y)$,  $\nu(x,z)=\nu_{x}(z)$ and  $\mathbb{D}(\underline{a})=\{ x\in\mathbb{T}^{l}: \underline{a}\in I^{n}(x) \}$.
Since the inverse branches are well defined in balls of radius $\gamma$ for every $x$, we can extend $S(.,\underline{a})$ to a function in $B(x,\gamma)$.

For $\underline{a}\in I^\infty(x)$, the sequence $(S(x,[\underline{a}]_{n}))_{n\geq 1}$ converges uniformly to a function $S(x,\underline{a})$ and the sequence of the  derivatives of order $j$, $1\leq j \leq r$,   $(D^jS(x,[\underline{a}]_{n}))_{n\geq 1}$ converges uniformly to $D^jS(x,\underline{a})$.
Also, there exists a constant $\kappa >0$ such that $\lVert D^j S(x,\underline{a})\rVert \leq \kappa$ for every $\underline{a} \in I_n$ and $x \in \mathbb{D}(\underline{a})$.


\begin{remark}
For $\underline{a}\in I^{n}(x)$,  we have the expression 
$$\rho DS(x,\underline{a})=  D_{x}\nu(x_{1},y_{1})D\varphi(x_{1}))^{-1} +  \sum_{i=1}^{n}(\prod_{j=2}^{i-1}D_{y}\nu(x_{j},y_{j}))D_{x}\nu(x_{i},y_{i})D\varphi^{i}(x_{i}))^{-1},
$$
where $x_{i}=[\underline{a}]_{i}(x), \ y_{i}=\nu_{[\underline{a}]_{i+1}(x)} \circ...\circ\nu_{\underline{a}(x)}(0)$ and $y_{n}=0$.
\end{remark}

\begin{remark} The graphs of $S(.,\underline{a})$ correspond to the components of the attractor, which are unstable manifolds. More precisely:

\begin{enumerate}
	\item  If $(x,y,z)\in\Delta_{T},$ then $(x,y,z)=(x,S(x,\underline{b}))$ for some $\underline{b}\in I^{\infty}(x)$.
	
	\item   If $(x,y,z)\in\Delta_{T}$ and $\underline{a}\in I_n$ is such that $(x,y,z)\in Z_{\underline{a}}$, where  $Z_{\underline{a}}$ is the component described by $\underline{a}$, then there exists  $a_{\infty}\in {I}^{\infty}(x)$, with $\pi_{n}(a_{\infty})=\underline{a}$, and for $n\geq 0$ if we write $x_{-n}=[a_{\infty}]_{n}(x)$ then  $E^{u}_{(x_{-n},y_{-n},z_{-n})}=\operatorname{graf}(DS(x_{-n},\sigma^{n}(a_{\infty})))$. 
\end{enumerate}
\end{remark}


Define the matrix  $A=[a_{ij}]_{s\times s}$ such that $a_{ij}=1$ if $\varphi(R_{i})\cap R_{j}\neq \emptyset$ and $a_{ij}=0,$ otherwise. Denote  $\hat{I}^{\mathbb{Z}}=\{1,...,s\}^{\mathbb{Z}}$ the set of bilateral sequences $\underline{\tilde{a}}=(...,\tilde{a}_{-1},\tilde{a}_{0},\tilde{a}_{1},...)$,  $\tilde{a}_{k}\in \{1,2,...,s\}$.
Consider  $\Sigma_A$ the subset of  $\hat{I}^{\mathbb{Z}}$ of admissible words, that is, 
$\Sigma_A=\{(...,\tilde{a}_{-1},\tilde{a}_{0},\tilde{a}_{1},...)\in \hat{I}^{\mathbb{Z}}: a_{\tilde{a}_{n},\tilde{a}_{n+1}}=1 \  \mbox{ for every } \ n\in\mathbb{Z}\}.$

Let us  see $\Sigma_A$ as a subset of $ I_{-\infty}\times I_{\infty}$, where $I_{-\infty}=\{(...,\tilde{a}_{-2},\tilde{a}_{-1},\tilde{a}_{0}): a_{\tilde{a}_{-n-1},\tilde{a}_{-n}}=1\  \mbox{for all } \ n\in\mathbb{N}\cup\{0\} \},$
and we write each element $\underline{\tilde{a}} \in \Sigma_A$ as a pair  $\underline{\tilde{a}}=(\underline{\tilde{a}}^-,\underline{{a}})$, where $\underline{\tilde{a}}^-=(...,\tilde{a}_{-2},\tilde{a}_{-1},\tilde{a}_{0})$
and $\underline{{a}}=(\tilde{a}_{1},\tilde{a}_{2},...)$.

Define the shift  $\sigma:\Sigma_A\to \Sigma_A$ given by $\sigma((\tilde{a}_{n})_{n\in\mathbb{Z}})=(\tilde{a}_{n+1})_{n\in\mathbb{Z}}$ and the semi-conjugation 
{ $h:\Sigma_A\to \Delta_T$ by 
	$h(\underline{\tilde{a}})=(x,S(x,\underline{a})),$ where $x$ is such that $\varphi^{i}(x)\in R_{{\tilde{a}}_{-i}}$ for every $i\geq 0$ {(i.e., $x= \tau \circ R(\underline{\tilde{a}}^-)$)}. 
	We have that $h$ is surjective and $h\circ {\sigma^{-1} }=T\circ  h$.   
	
	\begin{remark}
		{Above it appeared $\sigma^{-1}$ because in this notation the word $\underline{\tilde{a}}^-$ corresponds to the reversed itinerary of $x$. Considering $R: I_{-\infty} \to I_\infty$ defined by $R(\ldots, \tilde{a}_{-1}, \tilde{a}_{0}) = (\tilde{a}_{0}, \tilde{a}_{1}, \ldots)$ and $\sigma^-(\ldots, \tilde{a}_{-1}, \tilde{a}_{0})=(\ldots, \tilde{a}_{-2}, \tilde{a}_{-1})$ the one-sided shift in $I_{-\infty}$, we have that $\varphi \circ (\tau \circ R) = (\tau \circ R) \circ \sigma^-$.}
	\end{remark}

	\subsection{Topological pressure}
	Let $X$ be a compact metric space and let $f:X\to X$ be a continuous transformation. For each $n\in\mathbb{N}$ we define \begin{equation*}
		\operatorname{d}_{n}(p,q)=\max\{\operatorname{d}(f^{k}(p),f^{k}(q));0\leq k\leq n-1\}
	\end{equation*}Given $\epsilon>0$, a finite set $M\subset X$ is said to be $(n,\epsilon)-$separated if $\operatorname{d}_{n}(p,q)>\epsilon$ for every $p,q\in M$ with $p\neq q$.  
	
	The \textit{topological pressure} of a continuous function $\phi:X\to\mathbb{R}$ (called \textbf{potential}) with respect to  $f$ is defined by
	
	\begin{equation*}
		P(f,\phi)=\lim_{\epsilon\to 0}\limsup_{n\to +\infty}\frac{1}{n}\log\sup_{M}\sum_{p\in M}\exp\sum_{k=0}^{n-1}\phi(f^{k}(p)),
	\end{equation*}
	where the supremum is taken over all $(n,\epsilon)-$separated sets $M\subset X$.

	Consider the geometric potential  geometric potential $\phi:\Sigma_A\to \mathbb{R}$    given  by
	$
	\phi(\underline{\tilde{a}})=\log\lVert D_{y}\nu(h(\underline{\tilde{a}}))|_{Y }\rVert
	$. The topological pressure of $s\phi$ with respect to $\sigma$ is 
	\begin{equation}\label{pressure}
		P(\sigma, s\phi)=\lim_{n\to\infty}\dfrac{1}{n}\log\big(\sum_{\underline{a}\in I_{n}}\exp\big(s\sum_{i=0}^{n-1}\phi(\sigma^{i}(\underline{a}))\big)\big).
	\end{equation}

	Since $P(\sigma, s\phi)$ is decreasing with respect to $s$, $P(s, 0)=\log N$ and $\underset{s\to + \infty}{\lim}P(\sigma, s\phi)= -\infty$, so  there exists a unique $d_{0}>0$ such that $P(\sigma,d_{0}\phi)=0$}

	This number $d_0$ will allow to compute the dimension of the attractor and its stable sections. 
	The upper bounds for the dimensions follows in a standard way:
	%
	\begin{proposition}[Theorem 3.3 in \cite{Bortolotti-Silva}]\label{upper.bound} It is valid that:
		\begin{enumerate}[a)]
			\item 	$\dim_H (\rho(\Delta_{T}(x)))\leq \overline{\dim}_{B}(\rho(\Delta_{T}(x)))\leq d_{0}$ for every $x \in \mathbb{T}^l$; 
			\item 	$\dim_{H}(\Delta_{T}(x))\leq \overline{\dim}_{B}(\Delta_{T}(x))\leq d_{0}$ for every $x \in \mathbb{T}^l$;
			\item	$\dim_{H}(\Delta_{T})\leq \overline{\dim}_{B}(\Delta_{T})\leq  l +d_{0}$.
		\end{enumerate}
	\end{proposition}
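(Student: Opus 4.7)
The plan is to derive all three upper bounds from one Markov-type cover of the attractor, controlling diameters by Birkhoff sums of the geometric potential and cardinalities via the topological pressure. Fix $t>d_{0}$; since $P(\sigma,d_{0}\phi)=0$ and $s\mapsto P(\sigma,s\phi)$ is strictly decreasing, there exist $\delta,C>0$ with
\begin{equation*}
\sum_{\underline{a}\in I_{n}}\exp\bigl(t\,S_{n}\phi(\underline{a})\bigr)\leq C e^{-n\delta}\qquad\text{for every }n\geq 1,
\end{equation*}
where $S_{n}\phi(\underline{a})=\sum_{i=0}^{n-1}\phi(\sigma^{i}(\underline{a}))$. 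This tail estimate is the analytic engine behind the three bounds.

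The first geometric step is to estimate the size of the basic pieces. Each component of $T^{n}(V)\cap\mathbb{D}(x)$ is labelled by some $\underline{a}\in I^{n}(x)$ and lies in a neighborhood of the graph $y=S(x,\underline{a})$; in the $y$-direction it is the image of $E$ under the composition $\nu_{[\underline{a}]_{1}(x)}\circ\cdots\circ\nu_{\underline{a}(x)}$. Since $D_{y}\nu|_{Y}$ is conformal of norm $\lambda=e^{\phi}$, the chain rule together with the uniform bound $\|D^{j}S(\cdot,\underline{a})\|\leq\kappa$ recalled in Section 2 and a standard bounded-distortion argument give a $y$-diameter of at most $C_{1}\exp(S_{n}\phi(\underline{a}))$. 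The contraction hypothesis \eqref{eq 2.2.1} makes $\psi$ contract strictly more, so the same bound governs the full diameter of the piece and, by projection, that of its $\rho$-image.

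For (a) and (b) this yields covers of $\rho(\Delta_{T}(x))$ and $\Delta_{T}(x)$ indexed by $\underline{a}\in I^{n}(x)\subset I_{n}$. Summing $t$-th powers of the diameters against the pressure bound gives
\begin{equation*}
\mathcal{H}^{t}_{\epsilon_{n}}\bigl(\Delta_{T}(x)\bigr)\leq C_{1}^{\,t}\sum_{\underline{a}\in I_{n}}e^{t S_{n}\phi(\underline{a})}\leq C_{2}\,e^{-n\delta}\longrightarrow 0
\end{equation*}
with $\epsilon_{n}=C_{1}\overline{\lambda}^{n}$, so $\dim_{H}\Delta_{T}(x)\leq t$. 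For upper box dimension I would use a stopping-time refinement: given $\epsilon>0$, let $\mathcal{A}(\epsilon)$ be the antichain of minimal admissible words with $\exp(S_{|\underline{a}|}\phi)\leq\epsilon$; bounded distortion forces each such piece to have diameter comparable to $\epsilon$, while summing the pressure bound over lengths yields $\sum_{\mathcal{A}(\epsilon)}e^{t S_{|\underline{a}|}\phi}\leq C$, hence $\#\mathcal{A}(\epsilon)\leq C\epsilon^{-t}$ and $\overline{\dim}_{B}\Delta_{T}(x)\leq t$. The bound for $\rho(\Delta_{T}(x))$ is identical, $\rho$ being $1$-Lipschitz on the diameters in question.

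For (c) I would combine the slice cover with a cover of the base: cover $\mathbb{T}^{l}$ by $\lesssim\epsilon^{-l}$ balls of radius $\epsilon$; over each such ball $\Delta_{T}\cap\pi^{-1}(B)$ is covered by $\lesssim\sup_{x}N(\Delta_{T}(x),\epsilon)\lesssim\epsilon^{-t}$ balls of radius $\epsilon$, yielding $N(\Delta_{T},\epsilon)\lesssim\epsilon^{-l-t}$ and $\overline{\dim}_{B}\Delta_{T}\leq l+t$; letting $t\downarrow d_{0}$ proves the claim. The expected main obstacle is keeping distortion uniform across the attractor: $\phi$ is only Hölder, so the diameter estimate $\exp(S_{n}\phi)$ on cylinders depends on bounded distortion of Birkhoff sums over the expanding base $\varphi$, which in turn follows from $\varphi$ being $C^{r}$-expanding together with the uniform $C^{r}$ bound on $S(\cdot,\underline{a})$; once this is in place, everything else reduces to standard thermodynamic formalism on the subshift $\Sigma_{A}$.
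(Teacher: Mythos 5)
The paper does not prove this proposition; it imports it verbatim as Theorem~3.3 of \cite{Bortolotti-Silva}, so there is no ``paper's own proof'' here to compare against. Judged on its own, your argument is the standard thermodynamic-formalism upper bound and is essentially correct: for $t>d_0$ the strict monotonicity of $s\mapsto P(\sigma,s\phi)$ gives $P(\sigma,t\phi)<0$, hence exponential decay of the partition functions; the diameter of the Markov piece labelled by $\underline{a}\in I^n(x)$ is comparable to $\exp(S_n\phi)$ on the corresponding cylinder (the conformality hypothesis \eqref{eq 2.1.1} and the uniform $C^r$ bound $\lVert D^jS\rVert\leq\kappa$ give the bounded distortion needed for the two-sided comparison, which is exactly what the paper's Lemma~\ref{lema 3.1} records); summing $t$-th powers over $I^n(x)\subset I_n$ kills $\mathcal{H}^t$, and the stopping-time antichain gives the box-counting bound once you observe that the minimality of words in $\mathcal{A}(\epsilon)$ pins the diameter between $\underline{\lambda}\epsilon$ and $\epsilon$, so the antichain sum converges and $\#\mathcal{A}(\epsilon)\lesssim\epsilon^{-t}$.

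The one place you should be more careful is part~(c). Covering $\mathbb{T}^l$ by $\lesssim\epsilon^{-l}$ $\epsilon$-balls and then invoking ``$\sup_x N(\Delta_T(x),\epsilon)$'' does not by itself produce a cover of $\Delta_T\cap\pi^{-1}(B)$, because the slices $\Delta_T(x)$ move as $x$ ranges over $B$. The fix is to cover $\Delta_T\cap\pi^{-1}(B)$ directly by the tube pieces $T^{|\underline{a}|}(\mathbf{T}_{\underline{a}}\times E\times F)\cap\pi^{-1}(B)$ with $\underline{a}$ in the stopping-time antichain $\mathcal{A}(\epsilon)$: the uniform Lipschitz bound $\lVert DS(\cdot,\underline{a})\rVert\leq\kappa$ controls how much the graph of $S(\cdot,\underline{a})$ tilts over $B$, so each such tube has diameter $\lesssim\epsilon$, and there are $\lesssim\epsilon^{-t}$ of them over a fixed $B$. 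Once that is said explicitly, $N(\Delta_T,\epsilon)\lesssim\epsilon^{-l-t}$ and (c) follows. Everything else is fine; (a) indeed follows from (b) by $1$-Lipschitzness of $\rho$, though your direct argument for $\rho(\Delta_T(x))$ works just as well.
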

	
	The difficult part of  Theorem A are the lower bounds for the Hausdorff dimensions (that are enough since $\dim_H (X) \leq \dim_B (X)$ for every set $X$).
	
	\subsection{The smallest singular value}
	
	The smallest singular value shall be a
	useful tool to analyze the structure of the attractor close to the intersection between its components.
	
	\begin{definition}\label{def vs}
		Given a linear transformation $A:\mathbb{R}^{m}\to\mathbb{R}^{n}$, with $m\geq n$, the \textbf{smallest singular value} of $A$ is   
		\begin{equation}
			\textit{\textit{\textbf{m}}}(A):= \sup_{\dim(W)=n} \inf_{v\in W, \lVert v \rVert=1}\lVert A(v)\rVert,
		\end{equation} where the supremum is taken over the $n$-dimensional subspaces $W\subset \mathbb{R}^{m}$.
	\end{definition} 
	
	{Usually the singular values are defined as the non-negative square roots of the eigenvalues of $A^*A$.
		Denoting $\sigma_1 \geq \cdots \geq \sigma_m$ the singular values of $A$, they satisfy the following max-min relation:
		\begin{equation*}
			\sigma_j:= \sup_{\dim(W)=j} \inf_{v\in W, \lVert v \rVert=1}\lVert A(v)\rVert,
		\end{equation*}
		We call $\textbf{\textit{m}}(A)$ the smallest singular value because it is the smallest singular value that is not trivially null,  since $\sigma_m=\cdots = \sigma_{m-n-1}=0$ and $\sigma_{m-n}=\textbf{\textit{m}}(A)$.}
	
	{	A linear transformation $A:\R^m \to \R^n$ is surjective if and only if $\textbf{\textit{m}}(A)>0$, this means that we can understand the smallest singular value as a way to measure how surjetive the transformation is.}
	
	{For linear transformation $T_i: \R^n \to \R^{m-n}$, $i=1,2$, this means that the subspaces $E_i = \operatorname{graph}T_i$ are transversal if and only if $\textit{\textbf{m}}(T_1-T_2)>0$.
		So the value  $\textbf{\textit{m}}(T_1-T_2)$ measures how transversal are the graphs of $T_1$ and $T_2$.}
	
	One property of the smallest singular value that will be useful is the following triangular inequality:
	\begin{equation}\label{triangular}
		\textit{\textbf{m}}(A) \leq \textit{\textbf{m}}(B) + \lVert A-B\rVert.
	\end{equation}

	\section{The Geometric Lemma}
	The main step in the proof of the Theorem A is the following Geometric Lemma, which  is a higher-dimensional extension of Lemma 3.1 in \cite{Bothe}.
	
	\begin{lemma}\label{LG}
		(Geometric Lemma) There is $0<\mu_{0}<\frac{\log \overline \lambda}{2 \log \underline{ \lambda}}$ such that for any $\mu \in\big( \mu_{0} ,\frac{\log \overline \lambda}{2 \log \underline{ \lambda}} \big) $ and for each integer $n$ large, we can find an integer $k$, with $1<k<\frac{n}{2}$, and a proper compact subset $\mathbf{F}$ in $\mathbb{T} ^{l}$ which is the union of at least $sN^{n-1}-N^{\mu n}$ sets in $\mathcal{T}_{n}$ such that for any two points $x_1 , x_2 \in \mathbf{F}$ that satisfy $\varphi^{k-1}(x_1)\neq \varphi^{k-1}(x_2), \ \varphi^{k}(x_1)=\varphi^{k}(x_2)$, we have $\rho(T^{k}(\mathbb{D}(x_1)))\cap \rho(T^{k}(\mathbb{D}(x_2)))=\emptyset$.
	\end{lemma}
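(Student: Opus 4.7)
My plan is to extend \cite[Lemma~3.1]{Bothe} from the two-dimensional stable setting to arbitrary $p\geq 1$, with the smallest singular value $\textit{\textbf{m}}$ defined in Section~2 serving as the correct higher-dimensional substitute for the scalar transversality estimates used there.

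First, I describe the projection $\rho(T^{k}(\mathbb{D}(x)))$. Writing $T^{k}(x,w,v)=(\varphi^{k}(x),\nu^{k}_{x}(w),\psi^{k}_{x}(w,v))$ and using that $\nu$ contracts at rate at most $\overline{\lambda}$, one obtains
\begin{equation*}
\rho(T^{k}(\mathbb{D}(x))) \;=\; \{\varphi^{k}(x)\}\times \nu^{k}_{x}(E),
\end{equation*}
a $p$-dimensional disk of diameter at most $C_{1}\overline{\lambda}^{k}$ centered at $c(x):=\nu^{k}_{x}(0)$. For $x_{1},x_{2}$ with $y:=\varphi^{k}(x_{1})=\varphi^{k}(x_{2})$, disjointness of the two projections is then implied by $|c(x_{1})-c(x_{2})|>2C_{1}\overline{\lambda}^{k}$. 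The hypothesis $\varphi^{k-1}(x_{1})\neq \varphi^{k-1}(x_{2})$ forces the two backward $\varphi$-orbits from $y$ through the $x_{i}$ to differ already at the step adjacent to $y$.

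Second, I quantify $|c(x_{1})-c(x_{2})|$ from below. Setting $y^{(-1)}_{i}:=\varphi^{k-1}(x_{i})$, we have two distinct $\varphi$-preimages of $y$, separated by at least a constant $\delta_{0}>0$ depending only on the Markov partition. Writing $c(x_{i})=\nu_{y^{(-1)}_{i}}(c'(x_{i}))$ for an inner iterate $c'(x_{i})$ and linearizing, the intrinsic transversality of the two components of $\Delta_{T}$ containing the orbits translates (via $\rho S(\cdot,\underline{a})$ being graphs of transversal components) into strict positivity of the smallest singular value $\textit{\textbf{m}}$ of the derivative of the difference of these two graph maps. Propagating this transversality through the $k-1$ intervening $\nu$-iterations, and using the triangular inequality~\eqref{triangular} to transfer bounds between finite truncations and infinite words, I expect an estimate of the form
\begin{equation*}
|c(x_{1})-c(x_{2})| \;\geq\; C_{2}\,\textit{\textbf{m}}\,\underline{\lambda}^{k-1},
\end{equation*}
valid whenever the orbit cells belong to a ``good'' portion of $\mathcal{T}_{n}$ on which $\textit{\textbf{m}}$ admits a uniform positive lower bound.

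Third, I construct $\mathbf{F}$ and count the bad cells. The condition $T\in \mathcal{T}^{*}\cap \mathcal{E}^{*}$ allows me to choose $k$ with $1<k<n/2$ so that $\underline{\lambda}^{k-1}$ dominates $\overline{\lambda}^{k}$ after the singular-value loss; the upper bound $\mu_{0}<\log\overline{\lambda}/(2\log\underline{\lambda})$ records precisely the rate at which this balance succeeds. Declaring a cell of $\mathcal{T}_{n}$ \emph{bad} if it participates in some pair violating the uniform $\textit{\textbf{m}}$ lower bound, the number of bad cells is bounded above by $N^{\mu n}$ via a union bound over pairs of words with matching tails, exploiting intrinsic transversality to keep $\textit{\textbf{m}}$ uniformly positive across the compact attractor away from a thin exceptional set. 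Taking $\mathbf{F}$ as the union of the remaining cells completes the construction. The principal obstacle lies in establishing the uniform positive lower bound on $\textit{\textbf{m}}$ and in carrying out the bad-cell count accurately: in dimension $p\geq 2$, transversality cannot be reduced to a signed scalar as in Bothe's argument, and one must track the full rank-$p$ behavior of $D(\rho S(\cdot,\underline{a})-\rho S(\cdot,\underline{b}))$ using the smallest singular value and its triangular inequality~\eqref{triangular}.
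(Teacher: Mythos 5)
The core conceptual step in your proposal is wrong. You claim that intrinsic transversality, propagated through the $k-1$ intermediate iterations, yields a lower bound $|c(x_{1})-c(x_{2})|\geq C_{2}\,\textit{\textbf{m}}\,\underline{\lambda}^{k-1}$ for the centers of the two projected disks. Transversality does not give this. The $\pi$-projection of an overlap is, by definition, the set of base points $x$ for which the two disks intersect, and at such $x$ the centers may be arbitrarily close — even coincident. What transversality (via the lower bound on $\textit{\textbf{m}}(\rho DS(\cdot,\underline{a})-\rho DS(\cdot,\underline{b}))$ from Propositions~\ref{Lema 5.2} and~\ref{Prop 5.1}) actually gives is geometric information about \emph{which} base points have close centers: $g(x)=\rho S(x,\underline{a})-\rho S(x,\underline{b})$ is a submersion, so the overlap's $\pi$-projection lies in a tubular neighborhood of the codimension-$p$ submanifold $g^{-1}(0)$, with radius of order $\overline{\lambda}^{k}$ (Proposition~\ref{Prop 5.2}). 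This is a statement about the thinness of the bad set in the base, not about a lower bound for center separation at a fixed $x$.

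Consequently your balance ``$\underline{\lambda}^{k-1}$ dominates $\overline{\lambda}^{k}$'' is not the right criterion and in fact cannot hold for $k$ large when $\underline{\lambda}<\overline{\lambda}$, since $(\underline{\lambda}/\overline{\lambda})^{k}\to 0$. The role of $\mu_{0}$ in \eqref{eq.mu} and of the constraint $\mu_{0}<\log\overline{\lambda}/(2\log\underline{\lambda})$ is entirely different: they come from a volume-counting argument. The paper pulls back the tubular neighborhoods of the overlaps by $\varphi^{-k}$, estimates their volume (of order $\underline{\beta}^{-kl}(\omega^{k}+\underline{\beta}^{-(n-k)})^{p}$), multiplies by the number of overlaps ($\lesssim N^{2k}$), and bounds how many cells of $\mathcal{T}_{n}$ (each of volume $\gtrsim \overline{\beta}^{-ln}$) can intersect the resulting set. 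The inequality \eqref{5.3 eq} is exactly the condition under which one can squeeze an integer $k<n/2$ between the two resulting constraints so that the bad-cell count stays below $N^{\mu n}$. Your ``union bound over pairs of words with matching tails'' also misses this mechanism: the bad cells are not cells on which $\textit{\textbf{m}}$ fails to be uniformly positive (intrinsic transversality ensures it is positive on the relevant compact set), but cells that land inside the thin preimages of the tubular neighborhoods.
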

	
	This Section  is dedicated to the proof of this Lemma.

	\subsection{Overlaps between components of the attractor}

	Intrinsic transversality gives a uniform lower bound for 
	$\textbf{\textit{m}}( \rho DS(x,\underline{a})-\rho DS(x,\underline{b}))$
	when $\rho(S(x,\underline{a}))$ is close to $\rho(S(x,\underline{b}))$.

	\begin{proposition}\label{Lema 5.2}
		There are constants $c_{1}>0$, $\delta_1>0$ and  $k_1 \geq 1$ large enough for which the following holds for every $k\geq k_1$ (and $k\leq +\infty$):  let $\underline{a},\underline{b}\in I^{k}(x)$ be   such that $a_1\neq b_1$ and $\lVert \rho(S(x,\underline{a})) - \rho(S(x,\underline{b}))\rVert \leq \delta_1$, then it  is valid that
		\begin{equation}
			\textbf{\textit{m}}\big( \rho DS(x,\underline{a})-\rho DS(x,\underline{b})\big) > 2c_{1}.
		\end{equation}	
	\end{proposition}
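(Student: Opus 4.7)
The plan is to argue by contradiction, promoting the pointwise transversality supplied by Definition \ref{def Transv} to a uniform quantitative estimate through a compactness argument.

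Suppose no triple $(c_1,\delta_1,k_1)$ works. Then for each $n\in\N$ one finds $x_n\in\mathbb{T}^l$, an integer $k_n\geq n$ (allowing $k_n=+\infty$), and admissible words $\underline{a}_n,\underline{b}_n$ of length $k_n$ in $I^{k_n}(x_n)$ with distinct first letters such that
\[
\lVert \rho S(x_n,\underline{a}_n)-\rho S(x_n,\underline{b}_n)\rVert\leq \tfrac1n \quad\text{and}\quad \textit{\textbf{m}}\bigl(\rho DS(x_n,\underline{a}_n)-\rho DS(x_n,\underline{b}_n)\bigr)\leq \tfrac1n.
\]
When $k_n<\infty$ I first extend both words to admissible infinite continuations; the explicit series for $\rho DS$ recorded in Section \ref{Sec2}, together with the uniform fiber contraction $\overline{\lambda}<1$, guarantees that truncation at length $k_n$ perturbs $\rho S$ and $\rho DS$ by $O(\overline{\lambda}^{k_n})\to 0$, so the two inequalities persist (with $1/n$ replaced by $2/n$) once the words lie in $I^\infty$.

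Next I extract limits. By compactness of $\mathbb{T}^l$ and of $I^\infty$ in its product topology, a subsequence yields $x_n\to x_\infty$, $\underline{a}_n\to\underline{a}_\infty$, $\underline{b}_n\to\underline{b}_\infty$. The first letters live in the finite alphabet $\{1,\dots,s\}$ and stay distinct, so a further extraction stabilizes them to letters $a\neq b$. The uniform convergence $S(\cdot,[\underline{a}]_n)\to S(\cdot,\underline{a})$ and $DS(\cdot,[\underline{a}]_n)\to DS(\cdot,\underline{a})$ (consequence of the exponential contraction), together with the triangular inequality \eqref{triangular} for $\textit{\textbf{m}}$, gives
\[
\rho S(x_\infty,\underline{a}_\infty)=\rho S(x_\infty,\underline{b}_\infty) \quad\text{and}\quad \textit{\textbf{m}}\bigl(\rho DS(x_\infty,\underline{a}_\infty)-\rho DS(x_\infty,\underline{b}_\infty)\bigr)=0.
\]
Choose a ball $B\subset\mathbb{T}^l$ of radius $<1/2$ around $x_\infty$, small enough to sit inside $\mathbb{D}(\underline{a}_\infty)\cap\mathbb{D}(\underline{b}_\infty)$. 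The graphs $B_i:=\{(x,S(x,\underline{c}_\infty)):x\in B\}$ with $\underline{c}_\infty=\underline{a}_\infty$ or $\underline{b}_\infty$ are $l$-dimensional submanifolds of $\Delta_T\cap\pi^{-1}(B)$, and they are distinct components in the sense of Definition \ref{def Transv}: indeed, $a\neq b$ means the backward $T$-image of each leaf lies in a different Markov cell of $\mathcal{R}$, so the two local unstable leaves through the common point cannot agree on any neighborhood of $x_\infty$. Intrinsic transversality then forces $\rho(B_1)$ and $\rho(B_2)$ to be transversal at the intersection point $(x_\infty,\rho S(x_\infty,\underline{a}_\infty))$, where the tangent spaces are the graphs of $\rho DS(x_\infty,\underline{a}_\infty)$ and $\rho DS(x_\infty,\underline{b}_\infty)$ viewed as linear maps $\R^l\to\R^p$. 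Transversality of two such graphs in $\R^l\times\R^p$ is equivalent to surjectivity of the difference map, i.e. to $\textit{\textbf{m}}\bigl(\rho DS(x_\infty,\underline{a}_\infty)-\rho DS(x_\infty,\underline{b}_\infty)\bigr)>0$, contradicting the vanishing obtained above. This proves the proposition once $c_1$ is chosen as half of the infimum extracted from the argument.

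The main technical obstacle is certifying that the two limiting local graphs really are \emph{distinct} components of $\Delta_T\cap\pi^{-1}(B)$ rather than two parametrizations of a single leaf; this is exactly where the hypothesis $a_1\neq b_1$ together with the Markov property is crucial. A minor, routine point is the uniform joint continuity of $\rho S$ and $\rho DS$ in $(x,\underline{a})$, which is immediate from the series formula for $\rho DS$ in Section \ref{Sec2} and the bound $\overline{\lambda}<1$.
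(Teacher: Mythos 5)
Your proposal is correct and follows essentially the same strategy as the paper: a compactness argument that upgrades the open condition of intrinsic transversality to a uniform lower bound on the smallest singular value, with the finite-word case reduced to the infinite-word case via the $O(\overline{\lambda}^{k})$ truncation estimate. The paper structures it in stages (first a bound $4c_1$ on the compact set $\tilde\Sigma$ of exact coincidences, then an extension to a small neighborhood $\tilde\Sigma_m$, then the truncation step), whereas you fold all three into a single sequential contradiction, but the content is the same.
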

	
	\begin{proof}
		Applying the Property (P1) to the graphs of $DS(x,\underline{a})$ and  $DS(x,\underline{b})$,
		intrinsic transversality implies that $\textit{\textbf{m}}(\rho DS(x,\underline{a}_\infty)- \rho DS(x, \underline{b}_\infty) )>0$ for every $(x,\underline{a}_\infty,\underline{b}_\infty) \in \tilde \Sigma$, where 
		$$  \tilde \Sigma = \{(x,\underline{a}_\infty,\underline{b}_\infty) : \rho S(x,\underline{a}_\infty) = \rho S(x,\underline{b}_\infty), \underline{a}_\infty, \underline{b}_\infty \in I^\infty(x) \text{ and } [\underline{a}_\infty]_1 \neq [\underline{b}_\infty]_1  \}.$$
		
		Since $\tilde \Sigma$ is compact, there exists a constant $c_1>0$ such that 
		\begin{equation}\label{ineq}
			\textit{\textbf{m}}(\rho DS(x,\underline{a}_\infty)- \rho DS(x, \underline{b}_\infty) )>4c_1
		\end{equation}
		for every $(x,\underline{a}_\infty,\underline{b}_\infty) \in \tilde \Sigma$. 
		
		Considering $\tilde \Sigma_m = \{(x,\underline{a}_\infty,\underline{b}_\infty) : \lVert  \rho S(x,\underline{a}_\infty) - \rho S(x,\underline{b}_\infty) \rVert < 1/m $, $\underline{a}_\infty, \underline{b}_\infty \in I^\infty(x)$ and $[\underline{a}_\infty]_1 \neq [\underline{b}_\infty]_1  \}$, it follows that there exists $m_1$ large so that \eqref{ineq} is valid for every   $(x,\underline{a}_\infty,\underline{b}_\infty) \in \tilde \Sigma_m$, $m \geq m_1$.
		In fact, if for every $m$ there exists $(x_m, \underline{a}_m, \underline{b}_m) \in \tilde \Sigma_m$ for which \eqref{ineq} is not valid, considering $(x_0, \underline{a}_0, \underline{b}_0)$ any accumullation point, then by continuity  we would have $(x_0, \underline{a}_0, \underline{b}_0) \in \tilde \Sigma$ and not satisfying \eqref{ineq}.
		So it follows the Proposition  for $k=\infty$ and $\delta= 1/m_1$.
		
		Take $k_1$ large such that 
		$||\rho S(x,c_{\infty})-\rho S(x,\underline{c})|| \leq \delta/3$ 
		and 
		$||\rho DS(x,c_{\infty})-\rho DS(x,\underline{c})||  \leq c_1$ 
		for every $c_\infty \in I^\infty(x)$, $\underline{c}=[c_\infty]_k$  and $k \geq k_1$. 
		Proposition \ref{Lema 5.2}  follows by triangular inequality and $\delta_1=\delta/3$.
	\end{proof}

	In what follows, we consider $q$ large fixed so that $\diam \mathcal{T}_{q} < \dfrac{c_1}{2\kappa}$. 

	\begin{definition}
		Given $m > 1$ and $\underline{c}\in I_{q}$, $q\geq 1$,  consider two components of $T^{m}(V)\cap \pi^{-1}(\mathbf{T}_{\underline{c}})$ that are in distinct components of $T(V)\cap \pi^{-1}(\mathbf{T}_{\underline{c}})$,
		that is, there exist words $\underline{a}, \underline{b}\in I_m$ with $a_{1}\neq b_{1}$ such that 
		$Z_{\underline{a}}= T^m(\mathbf{T}_{\underline{a}} \times E \times F) $ and $Z_{\underline{b}}= T^m(\mathbf{T}_{\underline{b}} \times E \times F)$. 
		We say that the pair $(Z_{\underline{a}}, Z_{\underline{b}})$ is an \textbf{overlap} of $T^{m}(V)$ over $\mathbf{T}_{\underline{c}}$ if $\rho(Z_{\underline{a}})\cap \rho(Z_{\underline{b}}) \cap  \pi^{-1}(\mathbf{T}_{\underline{c}}) \neq \emptyset$. 
		%
		The set $B= \pi (\rho(Z_ {\underline{a}})\cap \rho(Z_{\underline{b}})) \subset \mathbb{T}^{l}$  will be called the \textbf{$\pi-$projection of the overlap  of the pair $(Z_{\underline{a}}, Z_{\underline{b}})$}.
	\end{definition}

	\begin{proposition}\label{Prop 5.1} For every $k\geq k_{1}$ it is valid the following:
		if $D_1$ and $D_2 $ are components of $T^{k}(V)\cap\mathbb{D}(x_{0})$ that are in different components of $T(V)\cap \mathbb{ D}(x_{0})$ and for which $\rho(D_1)\cap \rho(D_2)\neq \emptyset$, then there is a overlap $(Z_1,Z_2)$  in $ T^{k}(V)$ over some $\mathbf{T}_{\underline{c}}$, with $\underline{c}\in I_{q}$, $D_1\subset Z_1$ and $D_2\subset Z_2$. Moreover, we have 
		\begin{equation}
			\textbf{\textit{m}}\big( \rho DS(x,\underline{a})-\rho DS(x,\underline{b})\big) > c_1 \ \ \ \forall \ \ x\in\mathbf{T}_{\underline{c}}.
		\end{equation}
	\end{proposition}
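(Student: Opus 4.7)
The plan is to bridge the local overlap (which lives only over the point $x_0$) to a genuine overlap over a small Markov rectangle $\mathbf{T}_{\underline{c}} \in \mathcal{T}_q$, and then to propagate the smallest-singular-value bound from Proposition \ref{Lema 5.2} from the single point $x_0$ to the whole of $\mathbf{T}_{\underline{c}}$ using the Lipschitz control on $DS$ provided by the uniform bound $\|D^j S(\cdot, \underline{a})\| \leq \kappa$.

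First I would translate the hypothesis into symbolic terms. Each component $D_i$ of $T^k(V) \cap \mathbb{D}(x_0)$ corresponds to a word $\underline{a}_i \in I^k(x_0)$, with the two words $\underline{a}, \underline{b}$ having $a_1 \neq b_1$ because $D_1, D_2$ lie in distinct components of $T(V) \cap \mathbb{D}(x_0)$. Since $D_i \subset T^k(\mathbf{T}_{\underline{a}_i} \times E \times F)$ is the graph of $S(\cdot, \underline{a}_i)$ restricted to $\{x_0\}$, the assumption $\rho(D_1) \cap \rho(D_2) \neq \emptyset$ means precisely that $\rho S(x_0, \underline{a}) = \rho S(x_0, \underline{b})$. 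Next I would pick the unique $\underline{c} \in I_q$ with $x_0 \in \mathbf{T}_{\underline{c}}$ and set $Z_1 = T^k(\mathbf{T}_{\underline{a}} \times E \times F)$, $Z_2 = T^k(\mathbf{T}_{\underline{b}} \times E \times F)$. The inclusions $D_i \subset Z_i$ are tautological, and the pair $(Z_1, Z_2)$ is an overlap over $\mathbf{T}_{\underline{c}}$ because the point $(x_0, \rho S(x_0,\underline{a}))$ lies in $\rho(Z_1) \cap \rho(Z_2) \cap \pi^{-1}(\mathbf{T}_{\underline{c}})$.

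For the quantitative conclusion, I would first apply Proposition \ref{Lema 5.2} at the base point $x_0$: since $\|\rho S(x_0,\underline{a}) - \rho S(x_0,\underline{b})\| = 0 \leq \delta_1$, we get
\begin{equation*}
\textit{\textbf{m}}\bigl(\rho DS(x_0,\underline{a}) - \rho DS(x_0,\underline{b})\bigr) > 2 c_1.
\end{equation*}
To propagate this to an arbitrary $x \in \mathbf{T}_{\underline{c}}$, I would use the extension of $S(\cdot, \underline{a})$ to $B(x_0,\gamma)$ (which contains $\mathbf{T}_{\underline{c}}$ once $q$ is large enough that $\diam \mathcal{T}_q < \gamma$) and the second-order bound $\|D^2 S(\cdot, \underline{a})\| \leq \kappa$. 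This makes $x \mapsto \rho DS(x, \underline{a})$ Lipschitz with constant $\kappa$, and similarly for $\underline{b}$, giving
\begin{equation*}
\bigl\| (\rho DS(x,\underline{a}) - \rho DS(x,\underline{b})) - (\rho DS(x_0,\underline{a}) - \rho DS(x_0,\underline{b})) \bigr\| \leq 2\kappa \cdot \diam(\mathbf{T}_{\underline{c}}) < 2\kappa \cdot \tfrac{c_1}{2\kappa} = c_1.
\end{equation*}
The reverse triangle inequality \eqref{triangular} for $\textit{\textbf{m}}$ then yields $\textit{\textbf{m}}(\rho DS(x,\underline{a}) - \rho DS(x,\underline{b})) > 2c_1 - c_1 = c_1$ for every $x \in \mathbf{T}_{\underline{c}}$, as required.

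The only subtle point I anticipate is ensuring that the choice of $q$ is compatible with \emph{both} the Lipschitz-loss estimate (which needs $\diam \mathcal{T}_q < c_1/(2\kappa)$, already guaranteed by the paper's choice) \emph{and} the admissibility of the extended functions $S(\cdot, \underline{a}), S(\cdot, \underline{b})$ on all of $\mathbf{T}_{\underline{c}}$ (which needs $\diam \mathcal{T}_q < \gamma$); taking $q$ large enough to satisfy both is automatic since both bounds are of the same flavor. Beyond that the proof is a bookkeeping argument: the real work has already been done in Proposition \ref{Lema 5.2}, which extracted a uniform transversality constant from intrinsic transversality and compactness.
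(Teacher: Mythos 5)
Your proof is correct and follows essentially the same route as the paper's: associate the components $D_1, D_2$ with words $\underline{a}, \underline{b} \in I^k(x_0)$, invoke Proposition~\ref{Lema 5.2} at $x_0$ to get $\textit{\textbf{m}} > 2c_1$, then propagate to all $x \in \mathbf{T}_{\underline{c}}$ using the Lipschitz bound $\|D^2 S\| \leq \kappa$, the choice $\diam \mathcal{T}_q < c_1/(2\kappa)$, and the triangle inequality \eqref{triangular} for $\textit{\textbf{m}}$. Your write-up is in fact slightly more careful than the paper's (which elides the fact that $\rho(D_1)\cap\rho(D_2)\neq\emptyset$ gives $\|\rho S(x_0,\underline{a})-\rho S(x_0,\underline{b})\|$ small rather than literally zero, since $D_i$ are thin slabs around the graphs; both you and the paper would more properly use the bound $\leq 2\overline{\lambda}^k \leq \delta_1$ here).
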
 
	\begin{proof}
		Consider $\underline{a}, \underline{b}\in I^{k} (x_{0})$, with $[\underline{a}]_1\neq [\underline{b}]_1$, that describe the components $D_1$ and $D_2$, respectively. Consider the corresponding pair $(Z_{\underline{a}}, Z_{\underline{b}})$, and
		$t_0=\rho(S(x_{0},\underline{a}))=\rho(S(x_{0},\underline{b}))  \in\rho(\Delta_{T}(x_{0}))$,
		in particular $D_1\subset Z_{\underline{a}}$, $D_2\subset Z_{\underline{b}}$
		and Proposition \ref{Lema 5.2} implies
		$\textbf{\textit{m}}\big( \rho DS(x_{0},\underline{a})-\rho DS(x_{0},\underline{b})\big) \geq 2 c_1$.
		
		Let $\underline{c}\in I_{q}$ be so that $x_{0}\in \mathbf{T}_{\underline{c}} $, by triangular inequality we have:	
		\begin{align*}
			\textbf{\textit{m}}(\rho DS(x,\underline{a})-\rho DS(x,\underline{b}))\geq \textbf{\textit{m}}(\rho DS(x_0,\underline{a})-\rho DS(x,\underline{b}_0))-2\kappa||x - x_{0}||> c_1
		\end{align*}
		for all   $x\in\mathbf{T}_{\underline{c}}$. Then  $(Z_{\underline{a}},Z_{\underline{b}})$ is an overlap of $T^{k}(V)$ over $\textbf{T}_{\underline{c}}$ as we want.

	\end{proof}

	
	
	
	Let us analyze the $\pi$-projection of the overlap and estimate how small it is. 

	\begin{proposition}\label{Prop 5.2}
		Given $\omega \in (\overline{\lambda},1)$, there is a constant $K_1>0$  and an integer $k_2$  such that for every $k\geq k_{2}$ and every overlap in $T^{k}(V)$,
		there is a submanifold $S  \subset \mathbb{T}^{l}  $ of dimension $l-p$ and diameter at most $K_1$ such that 
		the $\pi$-projection of this overlap    is contained in the tubular neighborhood of $S$ of radius of bounded from above by $\omega^ {k }$.
	\end{proposition}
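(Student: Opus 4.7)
The plan is to build the submanifold $S$ as the zero set of the difference $g(x) := \pi_y S(x,\underline{a}) - \pi_y S(x,\underline{b})$ (where $\pi_y$ extracts the $\mathbb{R}^p$ component of $\rho S \in \{x\} \times \mathbb{R}^p$), defined on $\mathbf{T}_{\underline{c}}$, and then to show that the full $\pi$-projection of the overlap is captured by a level set $\{|g| \leq C \overline{\lambda}^k\}$ that lies in a thin tubular neighborhood of $g^{-1}(0)$.

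First I would observe that Proposition \ref{Prop 5.1} gives $\textit{\textbf{m}}(Dg(x)) > c_1$ uniformly on $\mathbf{T}_{\underline{c}}$, and that the uniform bound $\|D^jS(\cdot, \underline{a})\|\leq \kappa$ from Section~2 implies $\|D^2 g\| \leq 2\kappa$. Since $l \geq p$ (which must hold whenever intrinsic transversality can be realized non-trivially at an overlap), the uniform lower bound on the smallest singular value says that $g:\mathbf{T}_{\underline{c}} \to \mathbb{R}^p$ is a submersion, so $S := g^{-1}(0) \cap \mathbf{T}_{\underline{c}}$ is a $C^r$-submanifold of $\mathbb{T}^l$ of dimension $l-p$. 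Its diameter is bounded by $\diam(\mathbf{T}_{\underline{c}}) \leq \diam(\mathcal{T}_q) =: K_1$.

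Next I would quantify the overlap. A point $x$ lies in the $\pi$-projection $B$ of the overlap precisely when $\rho(Z_{\underline{a}}) \cap \rho(Z_{\underline{b}}) \cap \pi^{-1}(x) \neq \emptyset$. Since $Z_{\underline{a}} = T^k(\mathbf{T}_{\underline{a}} \times E \times F)$ and $\|D_y \nu\|_Y \leq \overline{\lambda}$, iterating the contraction shows that the fiber $\rho(Z_{\underline{a}}) \cap \pi^{-1}(x)$ has diameter at most $C_0\overline{\lambda}^k$ for $C_0 = \diam(E)$, centered around $(x, \pi_y S(x,\underline{a}))$; the analogous statement holds for $\underline{b}$. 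Consequently, $x \in B$ forces $|g(x)| \leq 2C_0 \overline{\lambda}^k$, so $B \subset \{x \in \mathbf{T}_{\underline{c}} : |g(x)| \leq 2C_0 \overline{\lambda}^k\}$.

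Finally I would argue that this sublevel set lies in an $(\omega^k)$-tubular neighborhood of $S$ by a quantitative implicit function / Newton argument. Given $x_0$ with $|g(x_0)| =: r$, pick the unique $v \in (\ker Dg(x_0))^\perp$ with $Dg(x_0)v = -g(x_0)$, so $|v| \leq r/c_1$. Using the $C^2$-bound on $g$ together with the uniform $\textit{\textbf{m}}(Dg) > c_1$ and the smallness $\diam(\mathbf{T}_{\underline{c}}) < c_1/(2\kappa)$ fixed before Proposition \ref{Prop 5.1}, a standard Newton iteration converges to some $x_1 \in S$ with $|x_1 - x_0| \leq 2r/c_1$. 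Plugging in $r = 2C_0\overline{\lambda}^k$ yields a neighborhood radius $\leq 4C_0 \overline{\lambda}^k / c_1$, and since $\omega > \overline{\lambda}$ we can pick $k_2$ so that $4C_0\overline{\lambda}^k/c_1 < \omega^k$ for every $k \geq k_2$.

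The step I expect to be the main obstacle is the quantitative inverse-function estimate and verifying that all constants ($c_1$, $\kappa$, $C_0$, $K_1$) are genuinely uniform across different overlap pairs $(\underline{a},\underline{b})$ and different words $\underline{c} \in I_q$ — this uniformity is what allows the same $k_2$ and $K_1$ to work for every overlap, and it relies essentially on compactness of $\tilde \Sigma$ (used in Proposition \ref{Lema 5.2}) together with the choice of $q$ that makes the diameter of Markov pieces fall below $c_1/(2\kappa)$.
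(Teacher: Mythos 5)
Your proposal follows essentially the same approach as the paper: define $g(x) = \rho S(x,\underline{a}) - \rho S(x,\underline{b})$, use Proposition \ref{Prop 5.1} to get that $g$ is a submersion with $\textit{\textbf{m}}(Dg) > c_1$ uniformly, take $S = g^{-1}(0)$, observe that the $\pi$-projection of the overlap lies in $\{|g| \leq C\overline{\lambda}^k\}$, and convert this into a tubular-neighborhood bound of radius comparable to $\overline{\lambda}^k < \omega^k$. The only cosmetic difference is that you run an explicit Newton iteration where the paper simply invokes the local form of submersions, and the paper defines $g$ on a ball $\mathbf{B}(x_0,r_0)$ using the extensions of $S(\cdot,\underline{a})$, $S(\cdot,\underline{b})$ to $\mathbf{B}(x_0,\gamma)$ (a detail you should make explicit so the Newton iterates stay in the domain), but the argument is otherwise the same.
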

	\begin{proof}
		Let $(Z_{\underline{a}},Z_{\underline{b}})$ be an overlap of $T^{k}(V)$ over some $\mathbf{T}_{\underline{c}},  \underline{c}\in I^{q}$.
		Fix $x_{0}\in \pi(\rho(Z_{\underline{a}})\cap \rho(Z_{\underline{b}})) $,  $r_{0}<\min\{\frac{c_{0}}{ 4 \kappa}, \gamma \}$
		and consider $
		g:\mathbf{B}(x_{0},r_{0})\to \mathbb{R}^{p}$ given by
		$$g(x)=\rho S(x,\underline{a})-\rho S(x,\underline{b}),$$	
		where $g$ is defined considering the extensions of $S(x,\underline{a})$ and $S(x,\underline{b})$ to $\mathbf{B}(x_{0}, \gamma)$.  In particular, $g(x_0)=0$. 
		Proposition \ref{Prop 5.1} implies that $g$ is a submersion,
		so $S=  g^{-1}(0)$ is a submanifold of dimension $l-p$ with diameter at most $K_1>0$.
		%
		
		%

		Notice that $\rho(Z_{\underline{c}})\subset\{(x,y)\in\mathbb{T}^{l}\times [-1,1]^{p}:||y-\rho S(x,\underline{c})||\leq \overline{\lambda}^{k} \}$, so 
		%
		if $x\in \pi(\rho(Z_{\underline{a}})\cap \rho(Z_{\underline{b}})) $, then  $||\rho S(x,\underline{a})-\rho S(x,\underline{b})||\leq 2\overline{\lambda}^{k}$, that is, 
		the $\pi$-projection of the overlap $\pi(\rho(Z_{\underline{a}})\cap \rho(Z_{\underline{b}}))$ is contained in $g^{-1}(\mathbf{B }(0,2\overline{\lambda}^{k}))$. 
		
		From the local form of the submersions, we can notice that $g^{-1}(\mathbf{B }(0,2\overline{\lambda}^{k}))$ is contained in a tubular neighborhood of $S$ of radius bounded from above by $K_2  \overline{\lambda}^k$. $K_1$ and $K_2$ can be taken uniform (independent of $k, \underline{a}, \underline{b}, S, x_0$) since $||Dg||$ is uniformly bounded from above and $\textit{\textbf{m}}(Dg)$ is uniformly bounded from below for all $(\underline{a},\underline {b})$ corresponding to overlaps of $T^{k}(V)$ with $k \geq k_1$. The result follows since $K_2 \overline{\lambda}^k < \omega^k$ for every $k \geq k_2$  large enough.

		\end{proof}

		\subsection{Proof of the Geometric Lemma}
		\begin{proof}[Proof of the Geometric Lemma]
			$T \in \mathcal{T}^*$ implies that $N^{2}\underline{\beta}^{-l}\overline{\lambda}^{p}<1$ and

			\begin{equation}\label{eq.mu}
				\mu_{0}:= \dfrac{\dfrac{l\log\overline{\beta}-  p\log\underline{\beta}}{2\log N}- \dfrac{l\log\overline{\beta}}{\log(\underline{\beta}^{-l }\overline{\lambda}^{p}N^{2})} }{\dfrac{1}{2}- \dfrac{2\log  N}{2\log(\underline{\beta}^{- l}\overline{\lambda}^{p}N^{2})} }	< \dfrac{\log\overline{\lambda}}{2\log\underline{\lambda}} < \dfrac{1}{2}.
			\end{equation}

			So $\mu_{0}>\dfrac{l\log\overline{\beta}- p \log\underline{\beta}}{2\log N}>0$ since $T \in \mathcal{E}^*$, and 
			for any $\mu > \mu_{0} $ it is valid that   


			\begin{equation} \label{5.3 eq}
				\dfrac{\mu\log N-l\log\overline{\beta}}{\log(N^{2}\underline{\beta}^{-l}\overline{\lambda}^{p} )} + \dfrac{l\log\overline{\beta}- p \log\underline{\beta}}{2\log N}< \dfrac{\mu}{2}. 
			\end{equation} 
			
			Consider $\omega\in (\overline{\lambda},1)$  close to $\overline{\lambda}$
			such that
			$N^2 \underline{\beta}^{-p}\omega^p<1$ and 
			\begin{equation} \label{5.3 eq.}
				\dfrac{\mu\log N-l\log\overline{\beta}}{\log(N^{2}\underline{\beta}^{-l}\omega^{p} )} + \dfrac{l\log\overline{\beta}- p \log\underline{\beta}}{2\log N}< \dfrac{\mu}{2}. 
			\end{equation} 
			
			Fix an integer $k_{3} = \max\{ k_1, k_2\}$. Given $k\geq k_{3}$, consider $B_1,B_2,...,B_{s^*}$ as the $\pi$-projections of the overlaps of $T^{k}(V)$.
			The number $s^*$  of overlaps of $T^{k}(V)$ is at most $s^2 N^{2k} $. 
			
			
			For each $B_{i}$, $i=1,...,s^*$, take $B^{*}_{i}$ such that $B^{*}_{i}= \varphi^{-k}(B_{i})$, where $\varphi^{-k}$ is one of the inverse branches of $\varphi^{k}$ corresponding to the overlap of $T^{ k}(V)$ associated to $B_i$.
			Each $B_{i}$ is contained in a tubular neighborhood $V_{i}$ of some submanifold $S_{i}$ of dimension $l-p$ and radius bounded from above by $\omega^{k }$.

			%
			%
			
			%
			
			Consider $\hat V_i $ the neighborhood of $V_i$ of radius $\underline{\beta}^{-(n-k)}$ and $V_i^{**} = \varphi^{-k}(\hat V_i^{*})$.
			
			Let $\mathcal{R}_{1i},\mathcal{R}_{2i},...,\mathcal{R}_{mi} $ be the   rectangles of $\mathcal{T}_{n}$ such that $\mathcal{R}_{ji}\cap B^{*}_{i}\neq\emptyset,$  
			then: 
			\begin{equation*}
				\mathcal{R}_{ji} \cap B_i^{*} \neq \emptyset \Rightarrow 
				\varphi^k(\mathcal{R}_{ji}) \cap B_i \neq \emptyset \Rightarrow 
				\varphi^k(\mathcal{R}_{ji}) \subset \hat V_i \Rightarrow 
				\mathcal{R}_{ji} \subset \varphi^{-k}(\hat V_i) = V_i^{**}.
			\end{equation*}
			This implies  that  $\cup_{j=1}^{m} \mathcal{R}_{ji} \subset V^{**}_{i}$    and  $\sum_{j=1}^{m}\operatorname{vol}(\mathcal{R}_{ji})\leq\operatorname{vol} (V^{**}_{i})$. 
			
			Notice that $\hat V_i$ is contained in a tubular neighborhoud of a submanifold with diameter uniformly bounded  and with radius at most $\omega^k + \underline{\beta}^{-(n-k)}$, so we have the following estimates:
			\begin{equation}  \operatorname{vol}(\hat V_i ) \leq K_3 (\omega^k + \underline{\beta}^{-(n-k)})^p	\end{equation}
			for some constant $K_2$.
			We will also use that $ \operatorname{vol}(V_i^{**}) \leq \underline{\beta}^{-kl}  \operatorname{vol}(\hat V_i )$.

			For $\mathcal{R}\in\mathcal{T}_{ n}$, we have $ K_{0}\overline{\beta}^{-n} \leq \operatorname{diam} \mathcal{R}\leq \underline{\beta}^{-n }$.
			So
			%
			$ mK_{0}\overline{\beta}^{-ln}  \leq \sum_{j=1}^{m}\operatorname{vol}(\mathcal{R}_{ji}),$ what implies:
			\begin{equation} m \leq  K^{-1}_{0} \overline{\beta}^{kn} \operatorname{vol}(V_i^{**})  
					\leq  K_3\overline{\beta}^{ln}  \underline{\beta}^{-kl} (  \omega^{kp}+ \underline{\beta}^{-(n-k)p} )			\end{equation}
				for some $K_3>0$.
				
				Considering $B^{*}= \bigcup_{i=1}^{s^{*}}B^{*}_{i}$,
				the number $M$ of rectangles in $\mathcal{T}_{n}$ that intersect $B^{*}$ is bounded from above by 
				\begin{align} 
					M\leq m  s^{*} 
					\leq K_4   ( \overline{\beta}^{ln}\underline{\beta}^{-kl}\omega^{kp}N^{2k} + \overline{\beta}^{ln}\underline{\beta}^{- kl -(n-k)p } N^{2k})
				\end{align}
				for  constant $K_4>0$.   
				%
				Therefore, the number of rectangles in $\mathcal{T}_{n}$ that do not intersect $B^{*}$ is at least 	\begin{equation}sN^{n-1}- K_4 ( \overline{\beta}^{ln}\underline{\beta}^{-kl}\omega^{kp}N^{2k} +  \overline{\beta}^{ln}\underline{\beta}^{- kl -(n-k)p } N^{2k}). 	\end{equation}
				
				Let $\mathbf{F}$ be the closure of the union of rectangles in $\mathcal{T}_n$ that do not intersect $B^{*}$. We have that $\mathbf{F}$ is compact and that for any $x_1, x_2\in F$ it holds that if $\varphi^{k-1}(x_1)\neq \varphi^{k-1}( x_2)$ and $ \varphi^{k}(x_1)=\varphi^{k}(x_2)$, then $\rho(T^{k}(D(x_1)))\cap \rho (T^{k}(D(x_2)))=\emptyset,$ otherwise we would have $\varphi^{k}(x_1)$ in some $B_{i}$.
				
				Now to conclude the Lemma, we need to show that for sufficiently large $n$ we can have $k<\frac{n}{2}$  for which it holds \begin{equation}\label{eq.5.4}
					K_4 \overline{\beta}^{ln}\underline{\beta}^{-k l }\omega^{pk}N^{2k}+ K_4  \overline{\beta}^{ln}\underline{\beta}^{- kl -(n-k)p } N^{2k}. 
				\end{equation}  
				
				Since $N^2\underline{\beta}^{- l}\omega^{p}<1,$ we have: 
				\begin{equation*} 
					K_4\overline{\beta}^{ln}\underline{\beta}^{-k  l}\omega^{pk}N^{2k}< \dfrac{N^{n\mu}}{2}
					\Longleftrightarrow  k>n\dfrac{(\mu\log N-l\log\overline{\beta})}{\log(N^2\underline{\beta}^{-  l}\omega^{p})}-  \dfrac{\log(2K_4)  }{\log(N^2\underline{\beta}^{- l}\omega^{p})}  
				\end{equation*}
				and \begin{equation*} 
					K_4  \overline{\beta}^{ln}\underline{\beta}^{- kl -(n-k)p } N^{2k} < \dfrac{N^{n\mu}}{2}
					\Longleftrightarrow k< n\Big(  \dfrac{\mu}{2}-\dfrac{(l\log\overline{\beta}-p\log\underline{\beta})}{2\log N - (l-p)\log \underline\beta } \Big) - K_5    
				\end{equation*}
				for $K_5 = \frac{\log(2K_4)}{2\log N - (l-p)\log \underline\beta  }$.
				
				Therefore, \eqref{eq.5.4} is valid if we  consider $n$ and $k$ with  \begin{equation*}\label{eq.5.5}
					n\dfrac{(\mu\log N-l\log\overline{\beta})}{\log(N^2\underline{\beta}^{- l}\omega^{p})}-\dfrac{\log(2K_4)  }{\log(N^2\underline{\beta}^{- l}\omega^{p})}< k <  n\Big(  \dfrac{\mu}{2}-\dfrac{(l\log\overline{\beta}-p\log\underline{\beta})}{2\log N} \Big)-   K_5.  
				\end{equation*}
				
				
				By taking $n $ large enough, the difference between the right-hand expression and the left-hand expression above is greater than 1. So there exist $k$ satisfying this inequality and $k<n/2$. 

			\end{proof}


			\section{Consequences of the Geometric Lemma}
			The rest of the proof of the Theorem \ref{Teo 5.1} is similar to the proof of \cite[Theorem A]{Bothe}, we describe it in this Section for completeness. 
			
			The Geometric Lemma allows us to construct sub-attractors that we know how to estimate a lower bound  of its dimension, and the restriction of the unstable holonomy to the $\rho$-projections of these sub-attractors is locally  bi-Lipschitz, this will allow to extend the lower bound for every sable section.
			
			
			\subsection{Subsets with large dimension}

			Consider $m$ a large integer so that Lemma \ref{LG} holds for a fixed value $\mu\in (\mu_{0},\frac{\log\overline\lambda }{2 \log \underline\lambda }) $ and $n=2m$, it gives an integer $k=k(m)<m$ and a compact set $\mathbf{F}=\mathbf{F}(m)\subset \mathbb{T}^{l}$.

			Let $\underline{a}_1, \underline{a}_2, ...,\underline{a}_{t}$ be the words in $I_n$ for which $\mathbf{T}_ {\underline{a}_1}, \mathbf{T}_{\underline{a}_2},..., \mathbf{T}_{\underline{a}_t} \in \mathcal{T} _n$ are not in $\mathbf{F}$. Since $\mathbf{F}$ is the union of at least $sN^{n-1}-N^{\mu n}$ sets in $\mathcal{T}_n$, it follows that $1\leq t\leq N^{\mu n}=N^{2\mu m}$.
			
			\begin{definition}
				Given a word $\underline{a}=(a_1,a_2,..., a_{m})\in I_m$, we say that \textbf{$\underline{a}$ appears in $\underline{\hat{a}}\in I_n$ } if $n\geq m$ and exists $j_{0}\in\{0,1,...,n-m\}$ such that $\hat{a}_{j_{0}+j}=a_{j}$ for $j=1,...,m$.
			\end{definition}
			
			Define 
			$I'_{m}=\{\underline{a}\in I_m: \underline{a} \mbox{ does not appear in }\ \underline{a }_{j}, \ j=1,2,...,t. \}.
			$

			For every $\underline{a}_{j}, \ j=1,2,...,t$, we cannot have all $\pi_{m}(\underline{a}_{j}), \pi_{m}(\sigma(\underline{a}_{j})), ...,  \pi_{m}(\sigma^{m}(\underline{a}_{j}) )$ in $I'_{m}$, so the number $r=r(m) = \# I'_{m}$ satisfies $sN^ {m-1}-(m+1)t\leq r < sN^{m-1}$.
			Changing $\mu$ by a slightly smaller number and $m$ large, we can suppose that 
			%
			\begin{equation}
				sN^{m-1}-N^{2\mu m}< r< sN^{m-1}.
			\end{equation}

			For each $u=1,2,...$, consider $I_{u m}$  as subset of $I_m\times I_{m }\times ...\times I_m$ 
			and 	$I'_{u,m} = (I'_{m}\times ...\times I'_m) \cap I_{um}.$

			
			

			Consider  $I'_{\infty, m}:=\{\underline{a}\in I_{\infty}: \pi_{um}(\underline{a})\in I'_{u,m}, \  \ u=1,2,...\}$ and 
			notice that if $\underline{a}\in I'_{\infty, m}$ then none of the words $\underline{a}_1, \underline{a}_2, ... ,\underline{a}_{t}$ appear in $\underline{a}$.
			%
			
			Consider also the sets:
			\begin{itemize}
				\item $C'=C'(m)=\bigcap_{u=1}^{\infty} \big(\bigcup_{\underline{a }\in I'_{u,m}}\mathbf{T}_{\underline{a}} \big) \subset \mathbb{T}^{l}$;
				
				\item $V'=V'(m)=C'(m)\times E\times F=\pi^{-1}(C'(m))$;
				
				\item $\Delta'_{T}=\Delta_{T}'(m)=\bigcap_{u=1}^{\infty}T^{um}(V')\subset \Delta_{T}\cap V'$;
				
				\item $\Delta'_{T}(x)=\Delta'_{T}(m,x)=\Delta_{T}'(m)\cap \mathbb{D}(x)$. 
			\end{itemize}
			
			It is valid that $\varphi^{m}(C')=C'$ and,  since every $\underline{a}_{j}$  does not appear in any $\underline{a}\in I'_{\infty, m }$, it follows that $\varphi^{i}(C')\subset \mathbf{F}$ for all $i\geq 0$. 
			Notice also that $\tau(I'_{\infty,m}) = C^{'}(m) $ and $\sigma^m(I'_{\infty,m}) = I'_{\infty,m}$.

			\begin{theorem}\label{TeoB1}
				It is valid that
				$$	\liminf_{m\to\infty}\inf_{x'\in C'(m)}(\dim_{H}\rho(\Delta_{T} '(x')))\geq d_{0}.$$
				Moreover, the restriction $\rho|_{\Delta_{T } '(x)}$ is injective for every $x \in C'(m)$.
			\end{theorem}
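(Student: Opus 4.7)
The plan is to combine the Geometric Lemma (Lemma \ref{LG}) with a thermodynamic-formalism computation on the $\sigma^m$-invariant subshift $\Sigma'_{\infty,m}:=I'_{\infty,m}$. First I would prove that $\rho|_{\Delta_T'(x)}$ is injective, turning the dimension question for $\rho(\Delta_T'(x))$ into a question about the coding map. Then I would push forward a suitable Gibbs measure on $\Sigma'_{\infty,m}$ and apply a mass-distribution argument to obtain $\dim_H\rho(\Delta_T'(x'))\geq d_0(m)$, where $d_0(m)$ is the unique value with $P(\sigma^m|_{\Sigma'_{\infty,m}}, d_0(m)\Phi_m)=0$ and $\Phi_m:=\sum_{i=0}^{m-1}\phi\circ\sigma^i$. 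Finally I would show $d_0(m)\to d_0$ as $m\to\infty$ by a continuity-of-pressure argument.

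For the injectivity, take distinct $\underline{a},\underline{b}\in I'_{\infty,m}\cap I^\infty(x)$ and let $j_0$ be the smallest index of disagreement. Set $x_1:=[\underline{a}]_{j_0+k-1}(x)$ and $x_2:=[\underline{b}]_{j_0+k-1}(x)$; by construction $\varphi^{k-1}(x_1)\neq\varphi^{k-1}(x_2)$ and $\varphi^k(x_1)=\varphi^k(x_2)$. Once one verifies $x_1,x_2\in\mathbf{F}$, the Geometric Lemma gives $\rho(T^k(\mathbb{D}(x_1)))\cap\rho(T^k(\mathbb{D}(x_2)))=\emptyset$, and since $T^{j_0-1}$ is an embedding this disjointness lifts to $\rho(S(x,\underline{a}))\neq\rho(S(x,\underline{b}))$. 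The condition $x_i\in\mathbf{F}$ is automatic when $j_0+k-1\geq 2m$ (the $2m$-prefix of the forward coding of $x_i$ then lies inside $\underline{a}\in I'_{\infty,m}$ and the same argument used to prove $\varphi^i(C')\subset\mathbf{F}$ applies); the small-$j_0$ regime must be handled by a splice analysis showing that no bad word $\underline{a}_j$ can appear at any position of the concatenation of a prefix of $\underline{a}$ with the $I'_{\infty,m}$ forward coding of $x$.

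For the lower bound, construct a Gibbs/equilibrium measure $\mu_m$ on $(\Sigma'_{\infty,m},\sigma^m)$ for the potential $d_0(m)\Phi_m$ and push it forward via $\underline{a}\mapsto\rho(S(x',\underline{a}))$ to $\rho(\Delta_T'(x'))$; the injectivity just established makes this push-forward faithful. The Gibbs property $\mu_m([\underline{\alpha}])\asymp\exp(d_0(m)\sum_{i=0}^{um-1}\phi(\sigma^i\tilde{\underline{\alpha}}))$ for $\underline{\alpha}\in I'_{u,m}$, combined with the uniform contraction in \eqref{eq 2.2.1} and a standard mass-distribution argument, then yields $\dim_H\rho(\Delta_T'(x'))\geq d_0(m)$ uniformly in $x'\in C'(m)$. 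From the Geometric Lemma one has $|I_m\setminus I'_m|\leq(m+1)N^{2\mu m}$ with $\mu<1/2$, so $|I'_m|/N^m\to 1$, and continuity of the pressure under this limit yields $d_0(m)\to d_0$, giving the claimed liminf bound.

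The main obstacles I expect are (i) the delicate splice analysis in the small-$j_0$ regime of the injectivity step, which requires a sharper use of the definition of $I'_m$ (perhaps forbidding bad $2m$-words at any position of a concatenation rather than only at $m$-aligned ones), and (ii) the Gibbs/equilibrium measure construction on the restricted subshift $\Sigma'_{\infty,m}$, which rests on checking mixing/irreducibility properties inherited from the original Markov partition. Both are expected to be routine but require some bookkeeping.
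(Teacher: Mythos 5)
Your proposal takes a genuinely different route for the lower bound, so let me compare.

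\textbf{Comparison.} Where you build a Gibbs (equilibrium) measure on the restricted subshift $\Sigma'_{\infty,m}$ for the potential $d_0(m)\Phi_m$ and then apply a mass-distribution argument, the paper avoids thermodynamic formalism for the lower bound altogether. It introduces threshold exponents $\mathbf{d}(x',m),\ \mathbf{d}^*(x',m),\ \mathbf{d}^{**}(x',m)$ defined directly by Moran-type sum equations in the diameters $\diam D_{x',\underline a}$ and proves a Moran-type covering lemma (Lemma~\ref{Lema 5.6}) whose hypotheses (nested cylinders, controlled diameters, a separation condition from Proposition~\ref{Prop 5.7}) deliver $\dim_H(C_*(x',m))\geq \mathbf{d}^{**}(x',m)$. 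The convergence $\inf_{x'}\mathbf{d}^{**}(x',m)\to d_0$ is obtained by direct counting (Proposition~\ref{Lema 5.4}), using only the cardinality bound $\#I'_m\geq N^m - N^{2\mu m}$ and the contraction bounds $\underline\lambda,\overline\lambda$ — no continuity of pressure is needed. This buys the paper two things that your route has to pay for: (a) there is no need to verify mixing/irreducibility of the carved-out SFT $\Sigma'_{\infty,m}$, which (as you flag in obstacle (ii)) is not inherited automatically and could degenerate into several transitive components with unclear pressure; and (b) the covering argument automatically produces a bound that is uniform in $x'\in C'(m)$, whereas a Gibbs measure lives on one fiber at a time and the uniformity would need to be argued separately.

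\textbf{Gaps in the proposal.} (1) In your injectivity step the indexing of $x_1,x_2$ is off: with $j_0$ the first disagreement of $\underline a,\underline b$ and $x_i=[\underline a]_{j_0+k-1}(x)$ you would generally have $\varphi^{j_0-1}(x_1)\neq\varphi^{j_0-1}(x_2)$ and $\varphi^{j_0}(x_1)=\varphi^{j_0}(x_2)$, not the relations at depths $k-1,k$ that the Geometric Lemma needs; one must shift to the window $(a_{j_0-k+1},\dots,a_{j_0})$ before applying the lemma, and this only works directly when $j_0\geq k$. The paper instead proves injectivity (Proposition~\ref{Prop 5.6}) by going to the first $m$-block of disagreement, using $\varphi^m(C')=C'$ and $\varphi^i(C')\subset\mathbf F$ to place the relevant preimages in $\mathbf F$, then obtaining a uniform gap $\delta_1$ via the bijection $\hat T^{(j_0-1)m}$. (2) Your obstacle (i) — fearing that $I'_m$ only forbids bad $2m$-words at $m$-aligned positions — is actually not a problem but you did not notice why: $I'_m$ forbids every length-$m$ subword of the bad $2m$-words $\underline a_j$, and any length-$2m$ window in an $I'_{\infty,m}$-admissible word contains a full aligned $m$-block, so that window can never equal $\underline a_j$. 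This is precisely the paper's remark that no $\underline a_j$ appears anywhere in $\underline a\in I'_{\infty,m}$. (3) Your mass-distribution step needs a separation input analogous to Proposition~\ref{Prop 5.7} (lower bound on the gap between cylinder sets $D_{x',\underline a_1}$, $D_{x',\underline a_2}$ in terms of $\diam$ of the common ancestor); the Gibbs property alone controls measures of cylinders but not of arbitrary small balls, and you do not say where this separation comes from. (4) The pressure-continuity step for $d_0(m)\to d_0$ is more delicate than stated: removing a small fraction of words from the alphabet does not by itself give convergence of the pressure of the restricted SFT unless one handles possible loss of irreducibility; the paper's direct counting in Proposition~\ref{Lema 5.4} is a cleaner way to get this limit.

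So the plan is a legitimate alternative in spirit, but as written it leaves the injectivity index bookkeeping, the separation estimate, and the SFT-mixing/pressure-limit issues unaddressed — each of which the paper resolves (or sidesteps) with a different, more elementary mechanism.
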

			%
			Given $x'=\tau(\underline{a}')\in C'(m)$, with $\underline{a}'\in I'_{\infty, m}$,
			denote 
			\begin{equation}I^{'}_{u,m}(x')=\{\underline{a}\in I'_{u,m}: \underline{a}\underline{a}'\in I'_{\infty, m}\}.	\end{equation}
			{We denote $I_m(x) = I_{1,m}(x)$ and $I'_m(x) = I'_{1,m}(x)$.}
			
			For $\underline{a}\in I_{um}(x)$, denote also
			\begin{equation}
				D_{x,\underline{a}}:= \rho {T}^{um}_{[\underline{a}\underline{a}']_{um}(x)}(\rho(\mathbb{D}([\underline{a}\underline{a}']_{um}(x))))
			\end{equation}

			Define the positive real numbers $\textbf{d}(x',m)$ and   $\textbf{d}^{*}(x',m)$  by the equalities
			\begin{align}
				\sum_{\underline{a}\in I_{m}(x')}\operatorname{diam}(D_{x',\underline{a}})^{ \textbf{d}(x',m)}&=1 \label{d}\\
				\sum_{\underline{a}\in I^{'}_{m}(x')}\operatorname{diam}(D_{x',\underline{a}})^{ \textbf{d}^{*}(x',m)}&=1 \label{d*}
			\end{align}

			For every $x' \in C'(m)$, it is valid that: 
			\begin{equation}\label{ineq}
				\textbf{d}(x',   m)\geq \textbf{d}^{*}(x',  m)
			\end{equation}
			
			We also have for every $x \in \mathbb{T}^l$:
			\begin{equation}\label{d_0}
				d_0 = \underset{ m \to + \infty}{\lim} \mathbf{d}(x,   m).					
			\end{equation}
				
				{					Notice that $\# I_{m}(x') = N^{m}$ and } 
				\begin{equation}\label{contagem}
					{					\# I^{'}_{m}(x') \geq N^{m} - N^{2\mu m} }
				\end{equation}

			\begin{proposition}\label{Lema 5.4} It  is valid that 
				$  \underset{m\to\infty}{\lim} \underset{x \in C'(m)}{\inf}  \textbf{d}^{*}(x,m) = d_0$. 
			\end{proposition}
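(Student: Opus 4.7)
The plan is as follows. From \eqref{ineq} we already have $\mathbf{d}^*(x,m) \leq \mathbf{d}(x,m)$, and \eqref{d_0} says $\mathbf{d}(x,m) \to d_0$ for every $x$; a bounded distortion argument on the products of the fibre derivatives defining $S(x,\underline{a})$ (which is essentially conformal in the $y$-direction by \eqref{eq 2.1.1}) makes this convergence uniform in $x$, so the easy half
\[\limsup_{m\to\infty}\;\sup_{x\in C'(m)}\mathbf{d}^*(x,m)\;\leq\; d_0\]
is immediate. The remaining task is the matching lower bound $\liminf_{m\to\infty}\inf_{x\in C'(m)}\mathbf{d}^*(x,m)\geq d_0$, and my strategy is to show that the words of $I_m(x)\setminus I'_m(x)$ contribute a vanishing amount to the Moran sum \eqref{d}.

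The central estimate will combine \eqref{contagem} (which together with the identity $I'_m(x)=I'_m\cap I_m(x)$ gives $\#(I_m(x)\setminus I'_m(x))\leq N^{2\mu m}$) with the uniform diameter bound $\operatorname{diam}(D_{x,\underline{a}})\leq C\overline{\lambda}^{\,m}$ that comes from $\|D_y\nu|_Y\|\leq \overline{\lambda}$ and the fact that $D_{x,\underline{a}}$ is the $\rho$-image of $\{\underline{a}(x)\}\times E\times F$ under the $m$-fold fibre composition. Writing $s_m:=\mathbf{d}(x,m)$ and subtracting the bad contribution from \eqref{d}, I expect to obtain
\begin{equation*}
\sum_{\underline{a}\in I'_m(x)}\operatorname{diam}(D_{x,\underline{a}})^{s_m}\;\geq\;1-C^{s_m}\bigl(N^{2\mu}\overline{\lambda}^{\,s_m}\bigr)^{m}.
\end{equation*}

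The pivotal step, which I expect to be the main obstacle, is proving the strict inequality $N^{2\mu}\overline{\lambda}^{\,d_0}<1$. Here I will invoke the variational principle evaluated at the measure of maximal entropy: since $\phi\geq \log\underline{\lambda}$, one gets $0=P(\sigma,d_0\phi)\geq \log N+d_0\log\underline{\lambda}$, i.e.\ $d_0\geq \log N/(-\log\underline{\lambda})$. The hypothesis $\mu<\log\overline{\lambda}/(2\log\underline{\lambda})$ from the Geometric Lemma rewrites as $2\mu|\log\underline{\lambda}|<|\log\overline{\lambda}|$, so multiplying the lower bound on $d_0$ by $(-\log\overline{\lambda})$ gives
\begin{equation*}
d_0(-\log\overline{\lambda})\;\geq\;\frac{\log N\cdot(-\log\overline{\lambda})}{-\log\underline{\lambda}}\;>\;2\mu\log N,
\end{equation*}
which is exactly $N^{2\mu}\overline{\lambda}^{\,d_0}<1$. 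Because $s_m\to d_0$ uniformly, this then yields $\epsilon_m:=C^{s_m}(N^{2\mu}\overline{\lambda}^{\,s_m})^{m}\to 0$ uniformly in $x$, and the preceding display becomes $\sum_{I'_m(x)}\operatorname{diam}(D_{x,\underline{a}})^{s_m}\geq 1-\epsilon_m$.

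To finish, I will use the strict monotonicity of $s\mapsto \sum_{I'_m(x)}\operatorname{diam}(D_{x,\underline{a}})^{s}$ together with the uniform upper bound $\operatorname{diam}(D_{x,\underline{a}})\leq C\overline{\lambda}^{\,m}$: these force the exponent $\mathbf{d}^*(x,m)$ making the sum equal $1$ to satisfy $s_m-\mathbf{d}^*(x,m)\leq \delta_m:=-\log(1-\epsilon_m)/\bigl(m(-\log\overline{\lambda})-\log C\bigr)$, which tends to $0$ uniformly in $x$. Therefore $\mathbf{d}^*(x,m)\geq \mathbf{d}(x,m)-\delta_m$, which combined with the upper bound produces $\inf_{x\in C'(m)}\mathbf{d}^*(x,m)\to d_0$. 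The only non-routine ingredient is the strict inequality $N^{2\mu}\overline{\lambda}^{\,d_0}<1$, for which the precise upper bound on $\mu$ provided by the Geometric Lemma is essential; everything else reduces to bookkeeping on Moran sums and uniform bounded distortion.
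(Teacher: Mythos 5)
Your proof is correct, and it takes a genuinely different route from the paper's. The paper argues by contradiction: it supposes $d_0 - \mathbf{d}^*(x_m,m) > \alpha$ along a subsequence of $m$, then derives from the Moran sum defining $\mathbf{d}(x_m,m)$ two estimates --- one forcing $d_0 \leq \frac{2\mu\log N}{-\log\overline\lambda} < \frac{\log N}{-\log\underline\lambda}$, the other forcing $\liminf_m \mathbf{d}^*(x_m,m)\geq \frac{\log N}{-\log\underline\lambda}$ --- and these are incompatible with $\mathbf{d}^*\leq\mathbf{d}\to d_0$. Your proof is direct: you bound the defect $\mathbf{d}(x,m)-\mathbf{d}^*(x,m)$ by an explicit $\delta_m\to 0$, uniformly in $x$, by estimating the contribution of $I_m(x)\setminus I'_m(x)$ to the unit Moran sum; the decay hinges on the strict inequality $N^{2\mu}\overline{\lambda}^{\,d_0}<1$, which you establish by combining the variational principle (applied to the measure of maximal entropy to get $d_0\geq \log N/(-\log\underline{\lambda})$) with the constraint $\mu<\log\overline{\lambda}/(2\log\underline{\lambda})$ from the Geometric Lemma. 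Both arguments rely on the same counting bound $\#\bigl(I_m(x)\setminus I'_m(x)\bigr)\leq N^{2\mu m}$ from \eqref{contagem} and the same restriction on $\mu$, but your argument is more transparent: it avoids the contradiction structure, produces an effective rate $\delta_m$, and makes explicit the lower bound on $d_0$ that the paper's proof uses only implicitly. One point you should not gloss over: the uniformity in $x$ of the convergence $\mathbf{d}(x,m)\to d_0$ in \eqref{d_0} is essential for both proofs (the paper's $x_m$ also varies with $m$), and while it does follow from bounded distortion (Lemma \ref{lema 3.1}), it deserves a sentence rather than a parenthetical remark.
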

			
			\begin{proof}
				We prove  by contradiction. By \eqref{ineq} and \eqref{d_0}, if the Proposition is not valid then we can suppose that 
				$d_0-\textbf{d}^{*}(x_m,m)>\alpha>0$ for infinitely many integers $m$ and some point $x_m \in C'(m)$. For these  $m$ and $x_m$ we have:
				\begin{align*}
					1 &=  \sum_{\underline{a}\in  I'_m(x_m)}\operatorname{diam}(D_{x_m,\underline{a}})^{ \textbf{d}(x_m,m)} + \sum_{\underline{a}\in  I_m(x_m) - I'_m(x_m) }\operatorname{diam}(D_{x_m,\underline{a}})^{ \textbf{d}(x_m,m)}\\
					&\leq 
					\overline{\lambda}^{m  \alpha } \sum_{\underline{a}\in  I'_m(x_m)}\operatorname{diam}(D_{x_m,\underline{a}})^{ \textbf{d}^*(x_m,m)}     + N^{2\mu m}   \overline{\lambda}^{m \textbf{d}(x_m,m)} \\
					&= \overline{\lambda}^{m  \alpha } +  N^{2\mu m}   \overline{\lambda}^{m \textbf{d}(x_m,m)}
				\end{align*}
				%
				
				Since $\mu < \dfrac{\log \overline\lambda}{2\log \underline \lambda}$, we have that  $$d_0=  \limsup_{m\to +\infty} \textbf{d}(x_m,m)\leq \frac{2\mu\log N}{-\log \overline\lambda } < \frac{\log N}{-\log \underline\lambda }.$$

				On the other hand, we also have that
				\begin{align*}
					1& =  \sum_{\underline{a}\in  I'_m(x_m)}\operatorname{diam}(D_{x_m,\underline{a}})^{ \textbf{d}(x_m,m)} + \sum_{\underline{a}\in  I_m(x_m) - I'_m(x_m) }\operatorname{diam}(D_{x_m,\underline{a}})^{ \textbf{d}(x_m,m)}  \\
					&\geq \big( N^{m}- (N^{m}-N^{2\mu m}) \big) \underline\lambda ^{m\textbf{d}^{*}(x_m,m)}   \\
					&= N^{2\mu m} \underline\lambda^{m\textbf{d}^{*}(x_m,m)}
				\end{align*}

				This   implies that 
				$$
				\liminf_{m\to\infty}\textbf{d}^{*}(x_m,m)\geq \frac{\log N}{-\log\underline\lambda}
				$$
				
				Therefore 
				$    \dfrac{\log N}{-\log \underline\lambda }\geq  \underset{m\to\infty}{\liminf}\textbf{d}^{*}(x_m,m)> d_0 + \alpha >  \dfrac{\log N }{-\log\underline\lambda},$
				what is a contradiction.
				
				%
				
			\end{proof}

			In the following, we need some estimates of bounded distortion for $D_y\nu$.  Denote 
			$T^n(x,y,z) 
			=  (\varphi^n(x),  \nu^{n}(x,y),\psi^{n}(x,y,z))$ 
			and  $\tilde{T}^{n}_{x}(y)= \nu^{n}(x,y)$.  
			
			
			\begin{lemma}\label{lema 3.1}
				There exists a constant  $K_6>0$ such that for every $n\geq 1$, $x \in \mathbb{T}^l$ and $\underline{a}\in I^{\infty}(x)$, it is valid that: 
				if $(\varphi^{n-k}(x),y_{k})$ and $(\varphi^{n-k}(x),y_k^{*})$ are in the convex hull of $\tilde{T}^{n-k}_{[\underline{a}]_{n}(x)}(\mathbb{D}([\underline{a}]_{n}(x)))$ for every $k=0,1,...,n-1$, then:
				\begin{equation}
					K_6^{-1}  \leq 	\prod_{k=0}^{n-1}\dfrac{\lambda(\varphi^{n-k}(x),y_{k})}{\lambda(\varphi^{n-k}(x),y^{*}_{k})}\leq K_6
				\end{equation}
				
				{Considering $(x_n,y_n) \in \mathbb{D}([\underline{a}]_n(x) )$ and  $(x_k,y_k) = \tilde T^{n-k}_{x_n}(x_n,y_n) $, it also follows that}
				\begin{equation}\label{diam}
					{	K_6^{-1}  \leq 	\dfrac{\prod_{k=0}^{n-1}\lambda(\varphi^{n-k}(x),y_{k})}{ \diam D_{x,\underline{a} }}\leq K_6 }
				\end{equation}
				
		\end{lemma}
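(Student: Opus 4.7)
The plan is a standard bounded distortion argument for the multiplicative cocycle $\lambda(x,y)=\|D_y\nu(x,y)|_Y\|$, followed by a mean value comparison using the conformality hypothesis \eqref{eq 2.1.1}.

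First I would establish that $\log\lambda$ is Lipschitz in $y$. Since $\nu$ is $C^r$ with $r\geq 2$, the function $\lambda$ is $C^{r-1}$, and by \eqref{eq 2.2.1} it is bounded above by $\overline{\lambda}<1$ and below by $\underline{\lambda}>0$ on the compact set $\mathbb{T}^l\times\overline{E}$. Hence there exists $L>0$ such that $|\log\lambda(x,y)-\log\lambda(x,y^*)|\leq L\|y-y^*\|$. Next I would observe that the distance $\|y_k-y_k^*\|$ shrinks geometrically: both $(\varphi^{n-k}(x),y_k)$ and $(\varphi^{n-k}(x),y_k^*)$ lie in the convex hull of $\tilde T^{n-k}_{[\underline{a}]_n(x)}(\mathbb{D}([\underline{a}]_n(x)))$, which is an $(n-k)$-fold image of a single disk under a map whose fibrewise derivative is bounded in norm by $\overline{\lambda}$; hence
\[
\|y_k-y_k^*\| \leq \overline{\lambda}^{\,n-k}\,\diam(\overline{E}\times\overline{F}).
\]
Summing the Lipschitz estimates and bounding the resulting geometric series gives $\sum_{k=0}^{n-1}|\log\lambda(\varphi^{n-k}(x),y_k)-\log\lambda(\varphi^{n-k}(x),y_k^*)|\leq L\diam(\overline{E}\times\overline{F})/(1-\overline{\lambda})$, which yields the first inequality with an explicit $K_6$.

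For the diameter comparison \eqref{diam}, I would use the chain rule together with the conformality assumption. Writing $\nu^n(p,\cdot)$ as an $n$-fold composition, the derivative $D_y\nu^n(p,y)$ is a product of matrices $D_y\nu(\varphi^i(p),\nu^i(p,y))$, each of which is conformal by \eqref{eq 2.1.1}; a product of conformal matrices is conformal, so $D_y\nu^n(p,y)$ has all singular values equal to $\prod_{i=0}^{n-1}\lambda(\varphi^i(p),\nu^i(p,y))$. By the mean value theorem applied to $\nu^n(p,\cdot)$ on the convex set $E$, and picking two reference points of $\overline{E}$ at positive distance, the diameter of the image $D_{x,\underline{a}}$ is trapped between $c\cdot\inf\|D_y\nu^n\|$ and $\diam(\overline E)\cdot\sup\|D_y\nu^n\|$, where $c>0$ and both the sup and inf are taken over the convex hull. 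Because $D_y\nu^n$ is conformal, inf and sup of its norm differ only by the bounded distortion factor proved in the first part; combining these, $\diam D_{x,\underline{a}}$ is comparable (up to a constant, after possibly enlarging $K_6$) to $\prod_{k=0}^{n-1}\lambda(\varphi^{n-k}(x),y_k)$ along any chosen orbit, which is \eqref{diam}.

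The main obstacle, and the reason conformality appears at all, is the lower bound in \eqref{diam}: one must prevent $\nu^n$ from collapsing the $Y$-direction. Without \eqref{eq 2.1.1} one would only have $\textit{\textbf{m}}(D_y\nu^n)\leq\|D_y\nu^n\|$, and these ratios could in principle blow up with $n$. Conformality of each factor forces $\textit{\textbf{m}}(D_y\nu^n)=\|D_y\nu^n\|$, so the whole argument reduces to the scalar distortion estimate already handled in the first paragraph.
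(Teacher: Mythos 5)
Your proof takes essentially the same route as the paper's (one-line) proof. The paper observes that conformality \eqref{eq 2.1.1} forces $\lambda(x,y)=|\det D_y\nu(x,y)|_Y|^{1/p}$ and then invokes the classical bounded distortion estimate for the Jacobian of a uniformly contracting $C^2$ cocycle; you instead prove the same distortion estimate directly for $\log\lambda$ (Lipschitz continuity in $y$, together with geometric decay $\|y_k-y_k^*\|\lesssim\overline\lambda^{\,n-k}$ along the backward orbit, summed as a geometric series). These are the same argument --- the Lipschitz control of $\log\lambda$ is exactly what underlies the ``standard estimates'' the paper cites --- so your first inequality is in order. The only point to flag is the lower bound in \eqref{diam}: you justify both bounds by the mean value theorem on the convex set $E$, but MVT alone only gives the \emph{upper} bound $\diam D_{x,\underline a}\leq \sup\|D_y\nu^n\|\cdot\diam\overline E$. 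It does not by itself supply a lower bound, because the image of a segment $[p_1,p_2]$ under $\nu^n$ could a priori wind and bring its endpoints close together. A clean fix, consistent with your use of conformality, is the isodiametric inequality: bounded distortion of $|\det D_y\nu^n|$ gives $\operatorname{vol}(\nu^n(E))\asymp |\det D_y\nu^n(y_0)|\operatorname{vol}(E)$, and since a set $A\subset\mathbb R^p$ satisfies $\diam A\geq c_p\operatorname{vol}(A)^{1/p}$, one gets $\diam D_{x,\underline a}\gtrsim |\det D_y\nu^n(y_0)|^{1/p}=\prod_k\lambda$, which is \eqref{diam}. (Alternatively, one can rule out winding through a quantitative inverse function theorem, but that requires a separate second-derivative distortion estimate.) This is a minor patch; your decomposition, use of conformality, and the role you identify for it in forcing the lower bound all match the paper's intent.
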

		\begin{proof}
			By conformality, we have that $$\lambda(x,y)=\lVert D_{y}\nu(x,y) |_{Y} \rVert { = \lVert D_{y}\nu(x,y)^{-1} |_{Y} \rVert ^{-1} } = |\det D_y\nu (x,y)_{|_{Y}}|^{1/p},$$ so the  standard estimates of distortion for $|\det D_y\nu|$ are also valid for $\lambda$
		\end{proof}
		
		{	Let us also consider a positive real numbers $\overline{\lambda}_{x,\underline{a}}$ and  $ \textbf{d}^{**}(x',m)$ defined by}
		\begin{equation}
			{\overline{\lambda}_{x',\underline{a}}:= \sup_{p \in \rho(\mathbb{D}( \underline{a}  (x') ))  } \| D_yT^{m}_{ \underline{a}   (x')} (p) \| \quad , \quad  \text{   for  } \underline{a} \in I_m(x')}  
		\end{equation} 
		and 	
		\begin{equation}\label{d**}
			{\sum_{\underline{a}\in I^{'}_{m}(x')} \overline{\lambda}_{x',\underline{a}}^{ \textbf{d}^{**}(x',m)}= K_6} 	
		\end{equation}
		
		\begin{proposition} { It  is valid that 
				$  \underset{m\to\infty}{\lim}  \underset{x \in C'(m)}{\inf}  \textbf{d}^{**}(x, m) = d_0 $.}
		\end{proposition}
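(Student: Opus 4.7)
The plan is to show that $\mathbf{d}^{**}(x,m)$ and $\mathbf{d}^{*}(x,m)$ differ by $O(1/m)$ uniformly in $x \in C'(m)$, and then invoke Proposition \ref{Lema 5.4}.

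First, I would show that $\overline{\lambda}_{x',\underline{a}}$ and $\diam D_{x',\underline{a}}$ are comparable up to a universal constant. By conformality of $D_y\nu|_Y$ and the chain rule, $\|D_y T^{m}_{\underline{a}(x')}(p)|_Y\|$ factors as $\prod_{k=0}^{m-1}\lambda(\varphi^{k}(\underline{a}(x')),\nu^{k}(\underline{a}(x'),y_0))$ for the orbit starting at the corresponding point, so $\overline{\lambda}_{x',\underline{a}}$ is a supremum of such products. The first estimate in Lemma \ref{lema 3.1} shows that any two such products (for different choices of $y_0$) agree up to the factor $K_6$, hence $K_6^{-1}\overline{\lambda}_{x',\underline{a}}\le \prod \lambda \le \overline{\lambda}_{x',\underline{a}}$ for every admissible product. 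Combined with \eqref{diam} from the same lemma, this yields
\[
K_6^{-2}\,\overline{\lambda}_{x',\underline{a}} \leq \diam D_{x',\underline{a}} \leq K_6\,\overline{\lambda}_{x',\underline{a}}.
\]

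Setting $C = K_6^{2}$, for every $d>0$ the previous double inequality gives
\[
C^{-d}\sum_{\underline{a}\in I'_m(x')}\diam(D_{x',\underline{a}})^{d} \;\leq\; \sum_{\underline{a}\in I'_m(x')} \overline{\lambda}_{x',\underline{a}}^{\,d} \;\leq\; C^{d}\sum_{\underline{a}\in I'_m(x')}\diam(D_{x',\underline{a}})^{d}.
\]
Evaluating at $d = \mathbf{d}^{*}(x',m)$ and using the defining equation \eqref{d*}, we obtain $\sum_{\underline{a}} \overline{\lambda}_{x',\underline{a}}^{\,\mathbf{d}^{*}(x',m)}\in[C^{-\mathbf{d}^{*}},C^{\mathbf{d}^{*}}]$, a quantity uniformly bounded away from $0$ and $\infty$ since $\mathbf{d}^{*}(x',m)$ is uniformly bounded.

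The main step is to convert this control of the sum into control of the exponent. I would note that the logarithmic derivative
\[
\frac{d}{dd}\log\sum_{\underline{a}\in I'_m(x')}\overline{\lambda}_{x',\underline{a}}^{\,d} \;=\; \frac{\sum_{\underline{a}} \overline{\lambda}_{x',\underline{a}}^{\,d}\log\overline{\lambda}_{x',\underline{a}}}{\sum_{\underline{a}} \overline{\lambda}_{x',\underline{a}}^{\,d}}
\]
is a weighted average of the numbers $\log\overline{\lambda}_{x',\underline{a}}$. Since each $\overline{\lambda}_{x',\underline{a}}\leq \overline{\lambda}^{m}$, every such logarithm is $\leq m\log\overline{\lambda}<0$, so the derivative above is bounded in absolute value from below by $m|\log\overline{\lambda}|$. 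Applying the mean value theorem to compare the values of $\sum \overline{\lambda}_{x',\underline{a}}^{\,d}$ at $d=\mathbf{d}^{*}(x',m)$ and $d=\mathbf{d}^{**}(x',m)$ (where by \eqref{d**} this sum equals $K_6$), we obtain
\[
|\mathbf{d}^{**}(x',m) - \mathbf{d}^{*}(x',m)| \;\leq\; \frac{(2\mathbf{d}^{*}(x',m)+1)\log K_6}{m\,|\log\overline{\lambda}|}.
\]
Since $\mathbf{d}^{*}(x',m)$ is uniformly bounded as $m\to\infty$ (Proposition \ref{Lema 5.4}), the right-hand side tends to $0$ uniformly in $x'\in C'(m)$. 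The hard part was obtaining the quantitative lower bound on the logarithmic derivative so that the bound $O(1/m)$ is uniform in $x'$. The conclusion $\lim_{m\to\infty}\inf_{x\in C'(m)}\mathbf{d}^{**}(x,m)=d_0$ now follows directly from Proposition \ref{Lema 5.4}.
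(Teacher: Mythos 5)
Your proof is correct and takes essentially the same approach as the paper's: both use Lemma \ref{lema 3.1} to get $\overline{\lambda}_{x',\underline{a}} \asymp \diam D_{x',\underline{a}}$ and then deduce that $\mathbf{d}^{**}$ and $\mathbf{d}^*$ converge to the same limit because the diameters tend to $0$. The paper's proof stops at ``it follows from \eqref{d*} and \eqref{d**}'' whereas you supply the actual mechanism: a mean-value-theorem estimate on the logarithmic derivative of $d\mapsto\sum_{\underline{a}}\overline{\lambda}_{x',\underline{a}}^{\,d}$, using $\overline{\lambda}_{x',\underline{a}}\le\overline{\lambda}^{\,m}$ to obtain a uniform $\gtrsim m$ lower bound on $|(\log F)'|$ and hence $|\mathbf{d}^{**}-\mathbf{d}^*|=O(1/m)$ uniformly over $x'\in C'(m)$; this is a legitimate and more explicit justification of the step the paper treats as obvious (your extra factor of $K_6$ in the comparison constant is harmless).
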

		\begin{proof}
			{ By Lemma \ref{lema 3.1} it follows that  
				$$K_6^{-1} \leq \frac{ \overline{\lambda}_{x,\underline{a}} }{ \diam D_{x,\underline{a}}} \leq K_6.$$ }
			
			{Since $\underset{m\to \infty}{\lim} \underset{\underline{a} \in I'_m(x')}{\inf} \diam D_{x,\underline{a} }  =0 $, it follows from  \eqref{d*} and \eqref{d**} that  $$\lim_{m\to \infty}\underset{x \in C'(m)}{\inf} \textbf{d}^{**}(x,m) = \lim_{m\to \infty}\underset{x \in C'(m)}{\inf} \textbf{d}^*(x,m) = d_0.$$
			}	
		\end{proof}

		\begin{proposition}\label{Prop 5.6} For every $m$, there exist  $\delta_1=\delta_1(m)$ so that:  
			for every $x_1\neq x_2$ in $C'(m)$ with  $\varphi^{m}(x_1)=\varphi^{m}(x_2)$,
			the distance between 
			$\rho(T^{m}(\Delta'_{T}(x_1)))$ and $\rho(T^{m}(\Delta'_{T}(x_2)))$ is at least $\delta_1$. Moreover, if $x\in C'$ then $\rho|_{\Delta'_{T}(x)}$ is injective.
		\end{proposition}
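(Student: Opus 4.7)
The strategy is to localize the ``time of first collision'' $j^* := \min\{j\geq 1 : \varphi^j(x_1) = \varphi^j(x_2)\} \in \{1,\ldots,m\}$ and to separate the projections at that level, then to push the separation forward through the contraction $T^{m-j^*}$ restricted to the common fibre $\mathbb D(\varphi^{j^*}(x_1))$. On this fibre, the $\rho$-component of $T^{m-j^*}$ is $y\mapsto \nu^{m-j^*}(\varphi^{j^*}(x_1),y)$, which is injective and, by the conformality of $D_y\nu$ together with the distortion estimate of Lemma \ref{lema 3.1}, bi-Lipschitz with lower constant bounded from below by $\underline\lambda^{m-j^*}\geq \underline\lambda^m$. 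Consequently, a positive distance between $\rho T^{j^*}(\Delta'_T(x_1))$ and $\rho T^{j^*}(\Delta'_T(x_2))$ translates into a positive distance of the same order times $\underline\lambda^m$ between $\rho T^m(\Delta'_T(x_1))$ and $\rho T^m(\Delta'_T(x_2))$, yielding the constant $\delta_1(m)$.

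When $j^* \geq k$ I apply the Geometric Lemma \ref{LG} directly: the points $y_i := \varphi^{j^*-k}(x_i)$ lie in $\mathbf F$ because $\varphi^i(C'(m))\subset \mathbf F$ for every $i \geq 0$, and they satisfy $\varphi^{k-1}(y_1)\neq \varphi^{k-1}(y_2)$, $\varphi^k(y_1)=\varphi^k(y_2)$. The Geometric Lemma then gives $\rho T^k(\mathbb D(y_1))\cap \rho T^k(\mathbb D(y_2))=\emptyset$, and the inclusion $T^{j^*}(\mathbb D(x_i))\subset T^k(\mathbb D(y_i))$ (from $T^{j^*-k}(\mathbb D(x_i))\subset \mathbb D(y_i)$) propagates disjointness to $\rho T^{j^*}(\mathbb D(x_i))$, which contains $\rho T^{j^*}(\Delta'_T(x_i))$.

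When $j^* < k$ the points $x_i$ do not lie at enough depth, so I exploit the symbolic codings of points of $\Delta'_T$. Each element of $\Delta'_T(x_i)$ has the form $(x_i, S(x_i, \underline b_i))$ for some $\underline b_i \in I'_{\infty,m}(x_i)$, and the depth-$m$ preimage $w_i := [\underline b_i]_m(x_i)$ lies in $\mathbf F$ because its length-$2m$ word in $\mathcal T_{2m}$ is the concatenation of $[\underline b_i]_m\in I'_m$ with the length-$m$ prefix of the forward-orbit coding of $x_i \in C'(m)$ (also in $I'_m$), and hence belongs to $I'_{2,m}$. Setting $y_i := \varphi^{m+j^*-k}(w_i)$, one verifies that $\varphi^{k-1}(y_i) = \varphi^{j^*-1}(x_i)$ and $\varphi^k(y_i) = \varphi^{j^*}(x_1)$, and that $y_i\in \mathbf F$ by the analogous two-block-window analysis applied to the sequence $\underline b_i \cdot \underline a'_i$ (with $\underline a'_i$ the forward coding of $x_i$), whose length-$m$ windows all lie in $I'_m$ by the defining property of $I'_{\infty,m}(x_i)$. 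The Geometric Lemma then yields $\rho T^k(\mathbb D(y_1))\cap \rho T^k(\mathbb D(y_2))=\emptyset$, and chasing the inclusions $T^m(\{(x_i,S(x_i,\underline b_i))\}) \subset T^{2m}(\mathbb D(w_i)) = T^{m-j^*}(T^{m+j^*}(\mathbb D(w_i)))\subset T^{m-j^*}(T^k(\mathbb D(y_i)))$ transports disjointness to the $T^m$-images. Since there are only finitely many possible pairs $([\underline b_1]_m,[\underline b_2]_m)$ in $(I'_m\cap I^m(x_1))\times (I'_m\cap I^m(x_2))$, and the parameter space of admissible pairs $(x_1,x_2)$ is compact by expansivity, minimizing over this finite family yields the uniform constant $\delta_1(m)>0$.

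For the injectivity of $\rho|_{\Delta'_T(x)}$, two distinct points of $\Delta'_T(x)$ with equal $\rho$-projection would correspond to $\underline b_1\neq \underline b_2\in I'_{\infty,m}(x)$ with $\rho S(x,\underline b_1) = \rho S(x,\underline b_2)$. Letting $n_0$ be the minimal depth at which $[\underline b_1]_{n_0}\neq [\underline b_2]_{n_0}$ and running the same construction with the preimages $[\underline b_i]_{n_0}(x)\in \mathbf F$ in place of $w_i$, and with the appropriate choice of $y_i$ according to whether $n_0\geq k$ or $n_0<k$, the Geometric Lemma forces disjoint $\rho$-projections of the corresponding components of $T^{n_0}(V)$, contradicting the assumed equality. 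The main technical obstacle throughout is the combinatorial verification that the auxiliary preimages $y_i$ and $w_i$ indeed lie in $\mathbf F$; this rests on the structural property that sequences in $I'_{\infty,m}(x)$ concatenated with the forward-orbit coding of $x \in C'(m)$ produce infinite words whose length-$m$ windows all belong to $I'_m$.
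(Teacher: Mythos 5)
Your proposal follows essentially the same strategy as the paper: locate a depth at which the two backward orbits first separate, apply the Geometric Lemma at that depth, and then push the resulting positive separation forward through the contracting $\rho$-dynamics to obtain $\delta_1(m)>0$; the injectivity is deduced by the same mechanism. The case-split on $j^*\geq k$ versus $j^*<k$ is avoidable, since the paper pulls back $m$ more steps using $\varphi^m(C')=C'$ so that the first collision time of the preimages in $C'$ automatically exceeds $m>k$ --- this is exactly your $j^*<k$ branch. One small imprecision worth fixing: $\underline{b}_i\,\underline{a}_i'$ has its length-$m$ windows in $I'_m$ only at offsets that are multiples of $m$, not at every offset; the correct inference that $y_i\in\mathbf{F}$ uses the paper's remark that no bad word $\underline{a}_j$ can appear in a sequence whose multiple-of-$m$ blocks all lie in $I'_m$, because any length-$2m$ window of such a sequence fully contains one of these $I'_m$-blocks, which by definition does not occur as a subword of any $\underline{a}_j$.
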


		\begin{proof} 
			Consider
			$k$ and $\mathbf{F}$ as in Lemma \ref{LG}, for every $x\in\mathbb{T}^{l}$ and any two components $D_1$, $D_2$ of $T(V)\cap \mathbb{D}(x)$ we have 	\begin{equation*}\rho(D_{1}\cap T^{k}(\pi ^{-1}(\mathbf{F})))\cap \rho(D_{2}\cap T^{k}(\pi ^{-1}(\mathbf{F})))= \emptyset .	\end{equation*}
			
			Therefore we can take $\delta = \delta (m)>0$  such that 
			\begin{equation}\label{Prop 5.5}
				d(\rho(T^{k}(\mathbb{D}(x_1))), \rho(T^{k}(\mathbb{D}(x_2))))\geq \delta
			\end{equation}
			for every
			$x_1,x_2\in \mathbf{F}$ such that $\varphi^{k-1}(x_1)\neq \varphi^{k-1}(x_2)$ and $\varphi^{k}(x_1)=\varphi^{k}(x_2)$. We can also have that:  
			\begin{equation*}d(\rho(D_1\cap T^{k}(\pi^{-1}(\mathbf{F}))), \rho(D_2\cap T^{k}(\pi^{-1}(\mathbf{F}))) )\geq \delta 	\end{equation*}
			for any $D_1, D_2$ two distinct components of $T(V)\cap \mathbb{D}(x), \ x\in\mathbb{T}^{l}$. 
			
			Since $\varphi^{m}(C')=C'$, there exist $y_1, y_2 \in C'$ such that $\varphi^{m}(y_1)=x_1$ and $\varphi^{m}(y_2)=x_2$. Then we have  $\varphi^{2m}(y_1)=\varphi^{2m}(y_2)$, and using \eqref{Prop 5.5}, there exists $\delta_1>0$ such that 	\begin{equation*}d(\rho(T^{2m}(\mathbb{D}(y_1))),\rho(T^{2m}(\mathbb{D}(y_2))))\geq \delta_1.	\end{equation*} 
			
			Since  $\rho(T^{m}(\Delta'_{T}(\varphi^{m}(y_{1})))\subset\rho(T^{2m}(\mathbb{D}(y_{1})))$ 
			and $\rho(T^{m}(\Delta'_{T}(\varphi^{m}(y_{2})))\subset\rho(T^{2m}(\mathbb{D}(y_{2})))$, it follows that: 	\begin{equation*}d(\rho(T^{m}(\Delta'_{T}(x_{1}))),\rho(T^{m}(\Delta'_{T}(x_{2}))))\geq \delta_1.	\end{equation*}
			
			Now, given 
			$u_1\neq u_2$ and $\Delta'(x)$, with $x\in C'$.  Consider $j_{0}$ the smallest integer so that  $x_{1}=\pi(T^{-j_{0}m}(u_1))\neq\pi(T^{-j_{0}m}(u_2))=x_2$.  Then: 	\begin{equation*}\varphi^{m}(x_1)=\pi(T^{-(j_{0}-1)m}(u_{1}))=\pi (T^{-(j_{0}-1)m}(u_2))=\varphi^{m}(x_2).	\end{equation*}
			
			It follows that: 	\begin{equation*}d(\rho(T^{-(j_{0}-1)m}(u_1)), \rho(T^{-(j_{0}-1)m}(u_2)))\geq \delta_1,	\end{equation*} and in particular:
			\begin{equation*}\rho(T^{-(j_{0}-1)m}(u_1)\neq \rho(T^{-(j_{0}-1)m}(u_2)).	\end{equation*} 
			
			Define $\hat{T}^{(j_{0}-1)m}:\rho(\Delta'_{T}(\varphi^{m}(x_{1})))\to\rho (\Delta'_{T}(x)) $ given by  $\hat{T}^{(j_{0}-1)m}(\rho(u))=\rho(T^{(j_{0}-1)m}(u)),$ where $u\in\Delta'_{T}(\varphi^{m}(x_1))$.  Notice that $\hat{T}^{(j_{0}-1)m}$ is a  bijection. So:
			\begin{align*}
				\rho(T^{-(j_{0}-1)m}(u_1)&\neq \rho(T^{-(j_{0}-1)m}(u_2))\\
				\Longrightarrow\hat{T}^{(j_{0}-1)m}(\rho(T^{-(j_{0}-1)m}(u_1))&\neq \hat{T}^{(j_{0}-1)m}( \rho(T^{-(j_{0}-1)m}(u_2)))\\ \Longrightarrow \rho(T^{(j_{0}-1)m}(T^{-(j_{0}-1)m}(u_1))&\neq \rho(T^{(j_{0}-1)m}(T^{-(j_{0}-1)m}(u_2))\\
				\Longrightarrow \rho(u_1)&\neq\rho(u_{2}).
			\end{align*}
			
		\end{proof}

		\begin{proposition}\label{Prop 5.7}
			If $\underline{a}_{1}$ and $\underline{a}_{2}$ are in $I'_{u,m}(x')$, with $u\geq 2$, and  $\sigma^{(j-1)m}(\underline{a}_1)\neq \sigma^{(j-1)m}(\underline{a}_2) $, $ \sigma^{jm}(\underline{a}_1)=\sigma^{jm}(\underline{a}_2)$ for some $j\in\{2,...,u\}, $ then 	\begin{equation}d(I_{\underline{a}_{1}},I_{\underline{a}_{2}})\geq   \delta_2    \operatorname{diam}(I_{\sigma^{jm}(\underline{a}_{1})})	\end{equation} for some constant  $\delta_2=\delta_2(m)>0$. 
		\end{proposition}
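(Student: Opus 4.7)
The plan is to reduce Proposition~\ref{Prop 5.7} to the full-fibre separation estimate produced in the proof of Proposition~\ref{Prop 5.6}, applied at an intermediate base point along the common tail of $\underline{a}_1$ and $\underline{a}_2$, and then to transport the resulting separation back to the scale of $I_{\underline{a}_i}$ via the bounded-distortion estimate \eqref{diam} of Lemma~\ref{lema 3.1}.

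Write $\underline{b}:=\sigma^{jm}(\underline{a}_1)=\sigma^{jm}(\underline{a}_2)$, of length $(u-j)m$, and decompose $\underline{a}_i=\underline{g}_i\underline{f}_i\underline{e}_i\underline{b}$, with $\underline{g}_i$ of length $(j-2)m$ (empty when $j=2$) and $\underline{f}_i,\underline{e}_i$ each of length $m$. The hypothesis forces $\underline{e}_1\neq\underline{e}_2$. Setting $w:=\underline{b}(x')$, $v_i:=\underline{e}_i(w)$ and $y_i:=\underline{f}_i(v_i)$, we obtain $v_1\neq v_2$ in $C'(m)$ with $\varphi^m(v_i)=w$, and $y_i\in C'$ with $\varphi^m(y_i)=v_i$ (by $\sigma^m$-invariance of $I'_{\infty,m}$). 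First I would invoke the intermediate estimate $d(\rho\,T^{2m}(\mathbb{D}(y_1)),\rho\,T^{2m}(\mathbb{D}(y_2)))\geq\delta_1(m)$ furnished by the proof of Proposition~\ref{Prop 5.6} applied to $v_1,v_2$ with these pre-images. Setting $Y_i:=\underline{a}_i(x')$ and $Q_i:=T^{jm}_{Y_i}(\mathbb{D}(Y_i))$, the relation $\varphi^{(j-2)m}(Y_i)=y_i$ gives $\rho Q_i\subset\rho\,T^{2m}(\mathbb{D}(y_i))$ inside $\mathbb{D}(w)$, so that $d(\rho Q_1,\rho Q_2)\geq\delta_1(m)$.

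Then I would push this separation forward by $T^{(u-j)m}_{w}$ to reach $\mathbb{D}(x')$. Since $I_{\underline{a}_i}=\rho\,T^{(u-j)m}_{w}(Q_i)$ and the induced $y$-component map $\nu^{(u-j)m}(w,\cdot)$ has conformal derivative by \eqref{eq 2.1.1}, the bounded-distortion estimate \eqref{diam} shows that this map is bi-Lipschitz with both Lipschitz constants comparable, within a factor $K_6^2$, to $\operatorname{diam}(I_{\underline{b}})$. Hence
\[
d(I_{\underline{a}_1},I_{\underline{a}_2}) \;\geq\; K_6^{-2}\,\operatorname{diam}(I_{\underline{b}})\,d(\rho Q_1,\rho Q_2) \;\geq\; \delta_2\,\operatorname{diam}(I_{\sigma^{jm}(\underline{a}_1)})
\]
with $\delta_2(m):=K_6^{-2}\delta_1(m)>0$. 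The delicate point will be the first step, namely extracting from the proof of Proposition~\ref{Prop 5.6} the full-fibre separation $\delta_1(m)$ uniformly over admissible pre-images $y_i\in C'$ (in particular for our specific choice $y_i=\underline{f}_i(v_i)$), and handling the borderline case $j=2$ where $\underline{g}_i$ is empty and $Q_i$ coincides with $T^{2m}(\mathbb{D}(y_i))$ directly; once this uniformity is confirmed, the bounded-distortion step is then routine.
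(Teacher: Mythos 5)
Your proposal is correct and follows essentially the same route as the paper's proof: both invoke the $\delta_1$-separation established in the proof of Proposition~\ref{Prop 5.6} at an intermediate cylinder along the common tail $\sigma^{jm}(\underline{a}_1)=\sigma^{jm}(\underline{a}_2)$, and then transport that separation to the scale of $I_{\underline{a}_i}$ via the bounded-distortion estimate of Lemma~\ref{lema 3.1} (the paper phrases this as bounding the ratio $\prod\lambda(w_k)/\prod\lambda(t_k)$ by $K_6$, you phrase it as $T^{(u-j)m}_w$ being bi-Lipschitz with constants comparable to $\operatorname{diam}(I_{\sigma^{jm}(\underline{a}_1)})$; these are the same thing up to a harmless factor of $K_6$). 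Your decision to step back to the $(j-2)m$-cylinder so that you land exactly on the $T^{2m}$-level estimate $d(\rho\,T^{2m}(\mathbb{D}(y_1)),\rho\,T^{2m}(\mathbb{D}(y_2)))\geq\delta_1$ is in fact slightly cleaner than the paper's shortcut of stating a $T^m$-level separation of the full fibres $\mathbb{D}(x_i)$ before passing to $T^{jm}$; and the uniformity of $\delta_1$ over admissible pre-images $y_i\in C'(m)$ that you flag as delicate does hold, because it traces back to \eqref{Prop 5.5}, which is uniform over pairs in $\mathbf{F}$ and applies since $\varphi^m(y_1)\neq\varphi^m(y_2)$ forces the first divergence index of $y_1,y_2$ to exceed $m>k$.
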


		\begin{proof}
			We have   $I_{\sigma^{jm}(\underline{a}_1)}=I_{\sigma^{jm}(\underline{a}_2)}= \rho(T^{(u-j)m}(\mathbb{D}(\tau(\sigma^{jm}(\underline{a}_1),\underline{a}'))))$. 
			
			Consider $u_1,u_2\in \mathbb{D}(\tau(\sigma^{jm}(\underline{a}_1),\underline{a}'))) $ such that the diameter of $I_{\sigma^{jm}(\underline{a}_{1})}$ is equals to $d(\rho(T^{(u-j)m}(u_1),\rho(T^{(u-j)m}(u_{2})))$.  Then
			\begin{equation*}\operatorname{diam}(I_{\sigma^{jm}(\underline{a}_{1})}) \leq\Big( \prod_{k=1}^{(u-j)m-1}\lambda(t_{k})\Big)  d(u_1,u_2)	\end{equation*} for some $t_{k}\in \rho(T^{(u-j)m-k}(\mathbb{D}(\tau(\sigma^{jm}(\underline{a}_1),\underline{a}'))))$.
			Since $d(u_1,u_2)\leq 2$, it follows that: 
			\begin{equation}\label{eq.5.6}
				\prod_{k=1}^{(u-j)m-1}\lambda(t_{k})  \geq  \dfrac{1}{2}   \operatorname{diam}(I_{\sigma^{jm}(\underline{a}_{1})}).
			\end{equation}
			
			Now, consider $x_1=\pi(\mathbb{D}(\tau(\sigma^{(j-1)m}(\underline{a_{1}}),\underline{a}')))$ and $x_2=\pi(\mathbb{D}(\tau(\sigma^{(j-1)m}(\underline{a_{2}}),\underline{a}')))$.  As $\sigma^{(j-1)m}(\underline{a}_1)\neq \sigma^{(j-1)m}(\underline{a}_2) $, we have that $x_1\neq x_2$ and as $\sigma^{jm}(\underline{a}_1)=\sigma^{jm}(\underline{a}_2)$, we have $\varphi^{m}(x_1)=\varphi^{m}(x_2)$.  Then, applying \eqref{Prop 5.5}, we can conclude that 
			
			\begin{equation*}d(\rho(T^{m}(\mathbb{D}(\tau(\sigma^{(j-1)m}(\underline{a_{1}}),\underline{a}')))),\rho(T^{m} (\mathbb{D}(\tau(\sigma^{(j-1)m}(\underline{a_{2}}),\underline{a}')))))\geq \delta_1.	\end{equation*}
			This gives that 
			\begin{equation*}d(\rho(T^{jm}(\mathbb{D}(\tau(\underline{a}_{1},\underline{a}'))),\rho(T^{jm}(\mathbb{D}(\tau(\underline{a}_{2},\underline{a}'))))\geq \delta_1.	\end{equation*}
			
			On the other hand, we have that
			\begin{equation*}d(I_{\underline{a}_{1}},I_{\underline{a}_{2}})\geq\Big( \prod_{k=1}^{(u-j)m-1}\lambda(w_{k})\Big)d(\rho(T^{jm}(\mathbb{D}(\tau(\underline{a}_{1},\underline{a}'))),\rho(T^{jm}(\mathbb{D}(\tau(\underline{a}_{2},\underline{a}'))))	\end{equation*} for some $w_{k}\in \rho(T^{(u-j)m-k}(\mathbb{D}(\tau(\sigma^{jm}(\underline{a}_1),\underline{a}'))))$.  Then: 
			\begin{equation}\label{eq.5.7}
				d(I_{\underline{a}_{1}},I_{\underline{a}_{2}})\geq  \delta_1   \prod_{k=1}^{(u-j)m-1}\lambda(w_{k}).
			\end{equation}
			
			By (\ref{eq.5.6}) and (\ref{eq.5.7}), we have  	\begin{equation*} d(I_{\underline{a}_{1}},I_{\underline{a}_{2}})\geq  \Big(\prod_{k=1}^{(u-j)m-1} \dfrac{\lambda(w_{k})}{\lambda(t_{k})}\Big) \dfrac{\delta_1}{2} {  \operatorname{diam}(I_{\sigma^{jm}(\underline{a}_{1})})}  \geq \delta_1   \dfrac{ K_6^{-1}}{2} \operatorname{diam}(I_{\sigma^{jm}(\underline{a}_{1})}).	\end{equation*}
			
			The result follows taking $\delta_2= \frac{\delta_1 K_6^{-1}}{2}$. 
			

		\end{proof}

		\begin{lemma}\label{Lema 5.6}
			For each $\underline{a}\in I'_{u,m}(x')$ and $x\in C'(m)$, suppose that the closed subset $D_{x',\underline{ a}}$ of $\rho(\mathbb{D}(x'))$ satisfy:
			\begin{enumerate}
				\item {
					$D_{x',\underline{a}} = T_{\underline{a}(x')}(\mathbb{D}(\underline{a}(x')))$, $\forall \ \underline{a}\in I'_{m}(x') $;
				} 
				
				\item $D_{x',\underline{a},\underline{\tilde{a}}}\subset D_{x',\underline{a}}$, $\forall \ \underline{a}\underline{\tilde{a}}\in I'_{u+1,m}(x') $;
				\item
				For some  $t_{k}$ and $w_{k}$ in  $\rho(\tilde{T}^{um-k}(\mathbb{D}([\underline{a}\underline{a}']_{um}(x'))))$:
				\begin{equation}\label{diam2}
					\prod_{k=1}^{um}\lambda (t_{k})  \leq \frac{\operatorname{diam}(D_{x',\underline{a}})}{\operatorname{diam}(\rho(\mathbb{D}([\underline{a}\underline{a}']_{um}(x'))))}\leq  \prod_{k=0}^{um}\lambda  (w_{k});
				\end{equation}	
				
				\item There is $\delta>0$ such that for any $\underline{a}=(\underline{a_1},...,\underline{a_u}), \ \underline{b}=(\underline{b_1},...,\underline{b_u}) \in I'_{u,m}(x') $, with $u\geq 2$, if the greatest index $j$ such that $(\underline{a_1},...,\underline{a_j})=(\underline{b_1},...,\underline{b_j})$ is less than or equal to $u-2$, so $D_{x',\underline{a}}\cap D_{x',\underline{b}}=\emptyset$ and
				\begin{equation}\label{item3}
					d(D_{x',\underline{a}},D_{x',\underline{b}})\geq \delta \operatorname{diam}(D_{x',\pi_{jm}(\underline{a})}).
				\end{equation} 
			\end{enumerate}
			
			Denote	$\displaystyle C_{*}(x',m)=\bigcap_{u\geq1} \bigcup_{\underline{a}\in I'_{u,m}(x')} D_{x',\underline{a}},  $ then it is valid that  $$\dim_{H}  ( C_{*}(x',m)) \geq \textbf{d}^{**}(x',m).$$
		\end{lemma}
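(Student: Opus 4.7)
The plan is to apply the mass distribution principle. Setting $s := \textbf{d}^{**}(x',m)$, I would construct a Borel probability measure $\mu$ supported on $C_*(x',m)$ satisfying $\mu(B(z,r))\leq C\,r^{s}$ for every $z\in C_*(x',m)$ and every small $r>0$; this immediately gives $\dim_H C_*(x',m)\geq s$.

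To build $\mu$, I would use the Kolmogorov inverse-limit construction on the decreasing family of compact sets $\bigcup_{\underline{a}\in I'_{u,m}(x')}D_{x',\underline{a}}$, whose intersection equals $C_*(x',m)$ by condition $(2)$. Writing $z_j := \tau(\sigma^{jm}(\underline{a},\underline{a}'))$ for the base point associated to the $j$-th block and $Z(z) := \sum_{\underline{b}\in I'_{m}(z)}\overline{\lambda}_{z,\underline{b}}^{s}$, I would assign cylinder weights
$$
\mu(D_{x',\underline{a}})\;=\;\prod_{j=1}^{u}\frac{\overline{\lambda}_{z_{j},\underline{a}_{j}}^{s}}{Z(z_{j})},\qquad \underline{a}=(\underline{a}_{1},\ldots,\underline{a}_{u})\in I'_{u,m}(x'),
$$
so that the transition factors sum to one at each level and the family extends to a Borel probability on $C_*(x',m)$. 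Combining condition $(3)$ with the bounded distortion \eqref{diam}, and using the counting estimate \eqref{contagem} together with the two-sided bounds $\underline{\lambda}\leq \lambda(\cdot)\leq \overline{\lambda}$ to control $Z(z)$ uniformly, I would conclude
$$
\mu(D_{x',\underline{a}})\;\leq\;C_{0}\,\operatorname{diam}(D_{x',\underline{a}})^{s}
$$
uniformly in $u$ and $\underline{a}$ (the accumulated factor $K_{6}^{u}$ in the normalization is harmless since $K_{6}\geq 1$).

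For the ball estimate, given $z\in C_*(x',m)$ and small $r>0$, I would choose the smallest $u$ such that every level-$u$ cylinder containing $z$ has diameter at most $r$. If $D_{x',\underline{b}}$ is any other level-$u$ cylinder that intersects $B(z,r)$, condition $(4)$ forces the agreement prefix between $\underline{b}$ and the cylinder $D_{x',\underline{a}}\ni z$ to have length at least $u-1$: otherwise the two cylinders would be separated by $\delta\cdot\operatorname{diam}(D_{x',\pi_{jm}(\underline{a})})$ for some $j\leq u-2$, which is much larger than $r$ by the uniform contraction bounds. Hence all level-$u$ cylinders meeting $B(z,r)$ are children of a single level-$(u-1)$ ancestor, of which there are at most $\#I'_{m}\leq N^{m}$. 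Summing gives $\mu(B(z,r))\leq N^{m}C_{0}\,r^{s}$, and the mass distribution principle completes the proof.

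The main technical obstacle I foresee is the uniform control of the normalizing factors $Z(z)$ along all base points appearing in orbits of $I'_{\infty,m}(x')$, since the exponent $s=\textbf{d}^{**}(x',m)$ is calibrated only at $x'$. Reconciling this with the Moran-type matching between $\mu(D_{x',\underline{a}})$ and $\operatorname{diam}(D_{x',\underline{a}})^{s}$ requires careful bookkeeping combining \eqref{contagem}, Lemma \ref{lema 3.1}, and the two-sided contraction bounds so that the product $\prod_{j=1}^{u}Z(z_{j})$ stays in a range that does not destroy the desired inequality as $u\to\infty$.
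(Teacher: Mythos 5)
Your plan is a mass-distribution-principle argument, which is a genuinely different route from the paper: the paper instead builds a ``shadow'' Moran set $C^*(x',m)$ by translating the cylinders $D_{x',\underline{a}}$ so that sibling cylinders are actually disjoint, proves $\dim_H C^* \geq \mathbf{d}^{**}$ by a covering-descent contradiction in which the distortion constant $K_6$ enters only once per step, and then transfers the bound to $C_*(x',m)$ via a surjective Lipschitz map $h^*\colon C_*\to C^*$ built from condition~(4). That construction is specifically designed to avoid the two issues your plan runs into.

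First, the parenthetical claim that ``the accumulated factor $K_6^u$ in the normalization is harmless since $K_6\geq 1$'' is wrong, and this is the central difficulty. Lemma~\ref{lema 3.1} gives distortion $K_6$ for a \emph{single} orbit product, so each factor $\overline{\lambda}_{z_j,\underline{a}_j}$ is only within a factor $K_6$ of the contribution of block $j$ to $\operatorname{diam}(D_{x',\underline{a}})$; multiplying $u$ such factors produces $\mu(D_{x',\underline{a}}) \leq K_6^{u+1}\operatorname{diam}(D_{x',\underline{a}})^{s}$. Since $u\to\infty$ and $K_6>1$ in general, this is not of the form $\mu(D)\leq C\,\operatorname{diam}(D)^s$ with a uniform $C$, and Frostman's lemma gives at best $\dim_H\geq s - c/m$ for some $c>0$, not the stated $\dim_H\geq \mathbf{d}^{**}(x',m)$. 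To make a measure-theoretic argument work with uniform constants one would need a Gibbs state for $s\phi$ (where the bounded Jacobian property controls cylinder masses globally rather than block-by-block), not the product measure you write down.

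Second, the measure as written is not Kolmogorov consistent. With $z_j:=\tau(\sigma^{jm}(\underline{a}\underline{a}'))$, the point $z_j$ for $j\leq u$ depends on the \emph{future} blocks $\underline{a}_{j+1},\dots,\underline{a}_u$ as well as $\underline{a}'$. Consequently $\sum_{\underline{b}}\mu(D_{x',\underline{a}\underline{b}})$ does not reduce to $\mu(D_{x',\underline{a}})$: the $j=u+1$ factor sums to $1$, but the remaining factors $\overline{\lambda}_{z_j(\underline{a}\underline{b}),\underline{a}_j}^{s}/Z(z_j(\underline{a}\underline{b}))$ still vary with $\underline{b}$. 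You would need base points determined by the \emph{past} (e.g.\ $z_j=\varphi^{jm}(x')$), but then the connection of $\overline{\lambda}_{z_j,\underline{a}_j}$ to $\operatorname{diam}(D_{x',\underline{a}})$ is lost. Finally, the obstacle you flag yourself --- uniform control of $Z(z)$ when only $Z(x')=K_6$ is calibrated by \eqref{d**} --- is real and unresolved in your sketch; the paper sidesteps it entirely because the covering-descent argument together with the Lipschitz map $h^*$ never requires renormalizing at base points other than $x'$.
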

		
		\begin{proof}[Proof of the Lemma \ref{Lema 5.6}]
			For every $\underline{a}_{i}\in I'_{u,m}(x')$ we will consider closed subsets $J^{*}_{\underline{a}_{i} } \subset  \rho(\mathbb {D}(x'))$ such that $\operatorname{diam}(J^{*}_{\underline{a}_{i}}) = \operatorname{diam}(D_{x',\underline{ a}_{i}})$. 
			We will obtain the sets $J^{*}_{\underline{a}_{i}}$ by a translation of the $D_{x',\underline{a}_{i}}$ as follows.
			
			For every $u$, we define recursively the sets $J^{*}_{\underline{a}}$ by a translation of the sets $D_{x',\underline{a}_{i}}$.  In the step $u+ 1$,
			for each $\underline{a}=(\underline{a}_{1},..., \underline{a}_{u})\in I'_{u,m}(x')$, we define the set $J^{*}_{\underline{a}\underline{b}}:=J^{*}_{\underline{a}_{1}\underline{a}_{2}...{\underline{a}_{u}},\underline{b}}$ making a translation of the set $D_{x',\underline{a}\underline{b}}$ so that $J^{*}_{\underline{a}\underline{b}}\subset J^ {*}_{\underline{a}}$ and $J^{*}_{\underline{a}\underline{b}_{1}}\cap J^{*}_{\underline{a }\underline{b}_{2}}=\emptyset $. Since they are obtained from translations, we have that $J^{*}_{\underline{a}}$ is closed and $\operatorname{diam}(J^{*}_{\underline{a}})=
			\operatorname{diam}(D_{x',\underline{a}})$ for every $ \underline{a}\in I'_{u,m}(x')$.

			Consider: 	\begin{equation*}C^{*}(x',m)=\bigcap_{u\geq1} \bigcup_{\underline{a}\in I'_{u,m}(x')} J^{*}_{\underline{a}}.	\end{equation*}

			{Let us  prove that 
				$\dim_{H}  ( C^{*}(x',m)) \geq \textbf{d}^{**}(x',m)$.  Suppose by contradiction that  $\textbf{d}^{**}(x',m) > \alpha > \dim_{H}  ( C^{*}(x',m))$, then there exist open covers $\mathcal{U}$ of $C^*(x',m)$ with $H_\alpha(\mathcal{U})$ arbitrarilly small (what implies that $\diam \mathcal{U}$ is also arbitrarily small). We can suppose that  $\mathcal{U}$ is finite.}
			{		  Consider $\epsilon_0$ such that if $H_\alpha(\mathcal{U})< \epsilon_0$ then every $U \in \mathcal{U}$ intersects at most one element of $\mathcal{A}:= \{ J^*_{\underline{a}}, \underline{a} \in I'_m(x') \}$.}
			
			{Given $\underline{a} \in A^*_{\underline{a}',m}$, we can write $J^{*}_{\underline{a}} = \phi_{\underline{a}}(\mathbb{D})$, where we identify $\mathbb{D}$ with every $\mathbb{D}(x)$ and $ \phi_{\underline{a}}$ is the composition of $T_{\underline{a}(x)}$ with some translation.
			}
			{For every $\underline{a} \in I'_m(x')$, $\mathcal{U}_{\underline{a}} = \{ U \cap J^*_{\underline{a}}, U \in \mathcal{U} \}$ is an open cover of $J^*_{\underline{a}} \cap C_*(x',m)$, so $\phi_{\underline{a}}^{-1}(\mathcal{U}_{\underline{a}})$ is another open cover of $C^*(x',m)$ with }
			\begin{equation*}
				{H_\alpha(\phi_{\underline{a}}^{-1}(\mathcal{U}_{\underline{a}} )) \leq \sup_{x \in J^{*}_{\underline{a}}}  \|D\phi_{\underline{a}}^{-1}(x)\|^\alpha   H_\alpha(\mathcal{U}_{\underline{a}}) }
			\end{equation*}
			
			{Notice that by \eqref{diam}, \eqref{diam2} and the Chain rule, we have that  $\sup_{x \in J^{*}_{\underline{a}}}  \|D\phi_{\underline{a}}^{-1}(x)\| \leq K_6  \diam(D_{x',\underline{a}})$.}
			{It must follow that $H_\alpha(\phi_{\underline{a}_0}^{-1}(\mathcal{U}_{\underline{a}_0} )) < \epsilon_0$ for some $\underline{a}_0$, otherwise  }
			$${  H_\alpha(\mathcal{U})  \geq \sum_{\underline{a}\in I^{'}_{m}(x')} H_\alpha(\mathcal{U}_{\underline{a}} ) \geq \sum_{\underline{a}\in I^{'}_{m}(x')}  \sup_{x \in J^{*}_{\underline{a}}}  \|D\phi_{\underline{a}}^{-1}(x)\|^{-\alpha} H_\alpha(\phi_{\underline{a}}^{-1}(\mathcal{U}_{\underline{a}} ))
			}$$
			$${ \geq \epsilon_0 K_6^{-1} \sum_{\underline{a}\in I^{'}_{m}(x')} \diam(D_{x',\underline{a}'})^{\alpha} > \epsilon_0 
			}$$
			
			{Above we used that $\alpha < \mathbf{d}^*(x',m)$ to obtain the strict inequality. 	So $\tilde {\mathcal{U}} := H_\alpha(\phi_{\underline{a}_0}^{-1}(\mathcal{U}_{\underline{a}_0} ))$	is another open cover with fewer elements than $\mathcal{U}$  and $H_\alpha(\tilde{\mathcal{U}}) < \epsilon_0$. Repeating this process, it will give an open cover of $C^*(x',m)$ without elements, what is a contradiction. So we have proved that $\dim_{H}  ( C^{*}(x',m)) \geq \textbf{d}^{**}(x',m)$.
			}


			Now, let us define a surjective transformation $h^{*}: C_{*}(x',m)\to C^{*}(x',m)$ that is Lipschitz continuous. This will imply that  	\begin{equation*}\dim_{H}(C_{*}(x',m))\geq \dim_{H}(h^{*}(C_{*}(x',m)))=\dim_{H}(C^{*}(x',m))\geq \textbf{d}^{**}(x',m). 	\end{equation*}
			
			Given $c_{*}\in C_{*}(x',m)$, for every $u=1,2,...$ there is $\underline{a}\in I'_{u,m}(x')$ such that $c_{*}\in D_ {x',\underline{a}}$.  Define $h^{*}(c_{*})=\bigcap_{u=1}^{\infty}J^{*}_{\underline{a }}$, which is well defined because $\{J^{*}_{\underline{a}}\}$ is a decreasing  sequence of compact sets and $\lim_{u\to\infty}\operatorname{diam}(J^{*}_{\underline{a}})=0$.
			Moreover, $h^{*}$ is a bijection between $C_{*}(x',m)$ and $C^{*}(x',m)$.
			
			Given $ c_{*} \neq c^{*} $ in $ C_{*}(x',m) $, consider $ u_{0} $ the greatest index such that $h^{*} (c_{*})$ and $h^{*} (c^{*}) $ are in the same $J^{*}_{\underline{a}_{1}, \underline{a}_{2}, ..., \underline{a}_{u_{0}}}. $ So: 	\begin{equation*}\operatorname{dist}(h^{*}(c_{*}),h^{*}(c^{*}))\leq \operatorname{diam}(J^{*}_{\underline{a}_{1},\underline{a}_{2},...,\underline{a}_{u_{0}}}).	\end{equation*} 
			
			By \eqref{item3}, we have that 	\begin{equation*}d(c_{*},c^{*})\geq \delta \operatorname{diam}(D_{x',\pi_{u_{0}m}(\underline{a})})=\delta \operatorname{diam}(J^{*}_{\underline{a}_{1},\underline{a}_{2},...,\underline{a}_{u_{0}}}).	\end{equation*} Therefore
			$d(h^{*}(c_{*}),h^{*}(c^{*}))\leq \delta^{-1}d(c_{*},c^{*})$ what completes this proof.

		\end{proof}
		
		Now we can finish the proof of Theorem \ref{TeoB1}.
		
		\begin{proof}[Proof of Theorem \ref{TeoB1}]
			
			For any arbitrary $x'_{m} = \tau(\underline{a}'_{m}) \in C'(m)$, $\underline{a}'_{m} \in I'_{\infty, m}$,  
			%
			%
			the sets 	\begin{equation*}D_{\underline{a}}=\rho(T^{um}(\mathbb{D}(\tau(\underline{a},\underline{a}'_{m}))))	\end{equation*} satisfy Properties (1) and (2) above, and Proposition \ref{Prop 5.7} implies Property (3).
			
			Since $  \rho(\Delta'_{T}(x'_{m}))=\bigcap_{u=1}^{\infty} \bigcup_{\underline{a}\in I'_{u,m}(x'_m)} D_{\underline{a}},$    Lemma \ref{Lema 5.6} implies that 
			%
			\begin{equation*}\liminf_{m\to\infty} \dim_{H}(\rho(\Delta'_{T}(x'_{m}))) \geq  \liminf_{m\to\infty}  \textbf{d}^{**}(x,m)  = d_0.	\end{equation*}		
			
		\end{proof}

		\subsection{Unstable holonomy}\label{U.H}
		
		Consider an integer $m$ large so that  {Theorem \ref{TeoB1}} is valid.
		Given $x'\in C'(m)$ and any $x\in\mathbb{T}^{l},$ denote  $x_c= x_{c}(x', x)$ the middle point of the geodesic segment joining $x'$ to $x$. For each $t'\in \Delta'_{T}(x')$, take $t_{c} \in\Delta_{T}(x_{c})\cap W^{u}(t') $ such that $t_c=t_c(x_c,t')$ is the closest point to $t'$ in $W^{u}(t')$.  Define, for $t^{*}\in V$ and $L\geq 0$, the unstable set
		\begin{equation*} W^{u}_{L}(t^*):=\{t\in W^{u}(t^{*}): d(\pi(T^{-n}(t)),\pi(T^{-n}(t^{*})))\leq L, \ \forall  \ n\geq 0\}.	\end{equation*}

		We have that
		$t'\in W^{u}_{\frac{d}{2}}(t_{c})$  for  $\operatorname{d}=d(x',x)$,
		%
		%
		%
		%
		%
		%
		%
		%
		%
		therefore for each $t'\in\Delta'(x')$ we can associate a single $t\in W^{u}_{\frac{d}{2}}(t_{c})\cap\Delta(x)$. That is, we can define the following application:
		\begin{equation*}\tilde{h}:\Delta'_{T}(x') \longrightarrow  \Delta_{T}(x)\quad \mbox{with} \quad \tilde{h}(t')= t\in W^{u}_{\frac{d}{2}}(t_{c}) \cap\Delta_{T}(x).	\end{equation*}
		
		The map $\tilde{h}$ is the unstable holonomy from $\Delta'_{T}(x')$ to $\Delta_{T}(x)$. 
		By {Theorem \ref{TeoB1}}, we have that $\rho|_{\Delta'_{T}(t')}$ injective for each $t'\in C'(m)$.  Thus, the following application: 	\begin{equation*}h:=\rho \tilde{h}\rho^{-1}:\rho(\Delta'_{T}(x'))\to\rho(\Delta_{T}(x))	\end{equation*}
		is well defined. 
		%
		For  this $h$, we have the following result:
		\begin{theorem}\label{TeoB2}
			For every $x'\in C'(m)$, with $m $ large enough, there is a finite partition of $\rho(\Delta_{T}'(x'))$ in disjoint compact subsets $E_1, E_2, ..., E_{q}$ such that $h|_{E_{i}}, \ i=1,2,...,q$, is injective and has a continuous Lipschitz inverse.
		\end{theorem}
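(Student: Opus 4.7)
The plan is to take the partition induced by the level-$n_0$ Markov pieces for $n_0$ large enough and verify bi-Lipschitz behavior on each piece, combining bounded distortion with the separation estimates already obtained.

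For each $\underline{c}\in I_{n_0}$ appearing as a prefix of some code in $I'_{\infty,m}$ compatible with $x'$, I would set
$$E_{\underline{c}} := \rho\bigl(\Delta'_T(x') \cap T^{n_0}(\mathbf{T}_{\underline{c}} \times E \times F)\bigr).$$
These are finitely many compact sets, mutually disjoint by the injectivity of $\rho|_{\Delta'_T(x')}$ from Proposition~\ref{Prop 5.6}, and their union equals $\rho(\Delta'_T(x'))$. On each $E_{\underline{c}}$, any two points come from codes $\underline{a}'_1, \underline{a}'_2\in I'_\infty(x')$ sharing the prefix $\underline{c}$, so their unstable leaves both lie in the same depth-$n_0$ component $Z_{\underline{c}}$.

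The Lipschitz bound for $h|_{E_{\underline{c}}}$ would follow by writing, in coordinates, $h(\rho S(x',\underline{a}'))=\rho S(x,\underline{a}')$ and applying the fundamental theorem of calculus to the difference $\rho S(\cdot,\underline{a}'_1)-\rho S(\cdot,\underline{a}'_2)$ along a geodesic in $\mathbf{T}_{\underline{c}}$ from $x'$ to $x$. Using the series expression for $\rho DS$ given in the Remark after the codification together with the distortion Lemma~\ref{lema 3.1}, the derivative of the difference is controlled by $|\rho S(x',\underline{a}'_1)-\rho S(x',\underline{a}'_2)|$ times a factor small with $n_0$, which yields the uniform multiplicative bound. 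For the reverse direction, Proposition~\ref{Prop 5.7} already provides a uniform separation $d(D_{x',\underline{a}},D_{x',\underline{b}})\geq\delta_2\operatorname{diam}(D_{x',\underline{c}})$ at every Markov scale, and bounded distortion transports this separation from the fiber over $x'$ to the fiber over $x$ at the cost of only a multiplicative constant.

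The hard part will be to quantify how the ``tilt'' of the common component bounds the shift between fibers against the transverse separation of distinct unstable leaves through $x'$. When the codes $\underline{a}'_1$ and $\underline{a}'_2$ agree on a long prefix past $\underline{c}$, both quantities decay at the same geometric rate and bounded distortion closes the argument. When they diverge right after $\underline{c}$, one has to invoke Propositions~\ref{Lema 5.2} and~\ref{Prop 5.1} for transversality of the graphs of $DS$, together with Proposition~\ref{Prop 5.2} ensuring that $\pi$-projections of overlaps are confined to arbitrarily thin tubular neighborhoods (hence excluded by $C'(m)$ at deep enough level). Choosing $n_0$ larger than the thresholds $k_1,k_2$ supplied by those propositions and relabeling the nonempty $E_{\underline{c}}$ as $E_1,\ldots,E_q$ delivers the required finite partition with uniform bi-Lipschitz constants.
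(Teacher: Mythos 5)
Your partition by Markov prefixes is essentially the one the paper uses (the paper takes $E_j = \rho(T^{k_1 m}(\Delta'_T(x_j)))$ for $x_j\in\varphi^{-k_1m}(x')\cap C'(m)$, which coincides with your $E_{\underline c}$ once $n_0 = k_1 m$), and the two ingredients you name---the separation supplied by the $C'(m)$ construction and bounded distortion of $\prod\lambda$---are indeed the two ingredients the paper combines. But there are concrete gaps in the execution.

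First, the FTC argument you sketch for ``the Lipschitz bound for $h|_{E_{\underline c}}$'' addresses the wrong direction: the theorem asks for $h^{-1}$ Lipschitz, i.e.\ $d(t',e')\leq L\,d(h(t'),h(e'))$, not for $h$ itself Lipschitz. Moreover, even for the direction it addresses, the FTC bound does not close on its own: if $\underline a'_1,\underline a'_2$ first diverge after $k_0 m$ letters, then both $\lVert\rho D_xS(\cdot,\underline a'_1)-\rho D_xS(\cdot,\underline a'_2)\rVert$ and the initial gap $|\rho S(x',\underline a'_1)-\rho S(x',\underline a'_2)|$ are of order $\lambda^{k_0m}$, so the ratio is a \emph{fixed constant}, not ``small with $n_0$''. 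Since $d(x',x)$ may be of order $1$, the error term $d(x',x)\cdot\sup\lVert D(\text{difference})\rVert$ need not be dominated by the initial gap, so the FTC inequality alone does not give a uniform two-sided bound.

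Second, the phrase ``bounded distortion transports this separation from the fiber over $x'$ to the fiber over $x$'' glosses over the step the paper actually needs. The paper's mechanism is: pick $k_1$ \emph{depending on $d=d(x',x)$} so that every component of $\varphi^{-k_1m}(B(x,d/2))$ has diameter $\leq\delta_1$; for $t',e'$ in the same $E_j$, set $k_0\geq k_1$ the first divergence time of the backward itineraries, so that $\tilde t_{k_0}\in W^u_{\delta_1}(\tilde t'_{k_0})$ and $\tilde e_{k_0}\in W^u_{\delta_1}(\tilde e'_{k_0})$; apply Proposition~\ref{Prop 5.6} (not \ref{Prop 5.7}, which is used in Lemma~\ref{Lema 5.6} instead) to the points $\tilde t'_{k_0},\tilde e'_{k_0}\in\Delta'_T(x'_{k_0})$ with $x'_{k_0}\in C'(m)$, concluding $d(\rho\tilde t_{k_0},\rho\tilde e_{k_0})\geq\delta_1$; push forward $k_0m$ steps to get $d(t,e)\geq\delta_1\prod_w\lambda(t_w)$, pair with $d(t',e')\leq 2\prod_w\lambda(t^*_w)$, and only \emph{then} invoke the fiberwise distortion bound $\prod\lambda(t^*_w)/\prod\lambda(t_w)\leq K_8$ (via $d(x_w,x^*_w)\leq\underline\beta^{-w}$ and $d(z_w,z^*_w)\leq K_7\overline\lambda^w$). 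You should also note that because $k_1$ depends on $d(x',x)$, the depth $n_0$ of your partition must as well; a fixed $n_0$ chosen only from the thresholds in Propositions~\ref{Lema 5.2}--\ref{Prop 5.2} is not sufficient. Finally, those transversality propositions are not invoked directly in this argument: they feed into the Geometric Lemma and hence into the construction of $C'(m)$ and the constant $\delta_1$, which is what this proof actually uses.
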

		
		\begin{proof}
			Consider $m, \ x'\in C'(m), x\in\mathbb{T}^{l}$, $d$, $\tilde h$ and $h$  as before. By Proposition \ref{Prop 5.6},  for any $x^{*}\in C'(m)$ and $t_1,t_2\in\Delta'_{T}(x^{*})$, with $\pi(T^{-m}(t_1))\neq\pi(T^{-m}(t_2))$, it is valid that 	\begin{equation*}d(\rho(W^{u}_{\delta_1}(t_{1})), \rho(W^{u}_{\delta_1}(t_{2})) )\geq \delta_1,	\end{equation*}
			because $W^{u}_{\delta_1}(t_{1})\subset T^{m}\big( \Delta'_{T}(\pi(T^{-m}(t_1)))\big) $ and $W^{u}_{\delta_1}(t_{2})\subset T^{m}(\Delta'_{T}(\pi(T^{-m}(t_2))))$. 
			
			Consider $\mathbf{B} = B(x,d/2)$ and $k_1$ positive integer large enough such that the diameter of every component of $\varphi^{-k_{1}m}(\mathbf{B})$ is at most $\delta_1$.
			
			Suppose that  $\{x_{1},x_{2},...,x_{q}\} =  \varphi^{-k_{1}m}(x')\cap C'(m)$.  Define  	\begin{equation*}E_{j}=\rho(T^{k_{1}m}(\Delta_{T}'(x_j))), \  \  j=1,...,q.	\end{equation*}
			We have that $\rho(\Delta_{T}'(x'))= \bigcup_{j=1}^{q}E_{j}$. Since  $\rho|_{\Delta_{T}'(x^{*})}$ is injective for every $x^{*}\in C'(m)$, it is valid that the sets $E_{j}, \ j=1,...,q$ are pairwise disjoint.
			
			Let us prove now that the restriction $h|_{E_{j}}, \ j=1,...,q$, is injective and that its inverse is Lipschitz continuous. Consider $t',  e'\in \rho(\Delta_{T}'(x'))$ so that $t',e'$ is in the same $E_{j}$, for some $j=1,2,...,q$,  with $t'\neq e'$.  There exist unique $\tilde{t'}$ and $\tilde{e'}$ in $\Delta'_{T}(x')$ with $\rho(\tilde{t'})=t'$ and $\rho(\tilde{e'})=e'$.  Denote: 	\begin{equation*}\tilde{t}=\tilde{h}(\tilde{t'})  , \quad t=\rho(\tilde{t})=\rho(\tilde{h}(\tilde{t'}))=h(t') , 	\end{equation*}
			\begin{equation*}\tilde{t'}_{i}=T^{-im}(\tilde{t'})  , \quad  \tilde{t}_{i}=T^{-im}(\tilde{t}) , \quad \pi(\tilde{t'}_{i})=\pi(\tilde{e'}_{i})=x'_{i},  	\end{equation*}
			\begin{equation*}\tilde{e}=\tilde{h}(\tilde{e'}), \quad e=\rho(\tilde{e})=\rho(\tilde{h}(\tilde{e'}))=h(e'), 	\end{equation*}
			\begin{equation*}\tilde{e'}_{i}=T^{-im}(\tilde{e'}) , \quad \tilde{e}_{i}=T^{-im}(\tilde{e}) , \quad  \pi(\tilde{t}_{i})=\pi(\tilde{e}_{i})=x_{i}	\end{equation*}
			for $i=0,1,2...$
			
			Let $k_0$ be the greatest integer such that  $\pi(T^{-k_{0}m}(\tilde{t'}))=\pi(T^{-k_{0}m}(\tilde{e'}))$.  Since $t',e'$ are in the same $E_{j}$, it follows that $k_0\geq k_1$.  Then
			\begin{equation*}d(\pi(T^{-k_{0}m}(\tilde{t'})),\pi(T^{-k_{0}m}(\tilde{e}) )=d(x'_{k_{0}},x_{k_{0}})\leq \delta_1,	\end{equation*} that is, $\tilde{t_{k_{0}}}\in W^{u}_{\delta_{1}}(\tilde{t'}_{k_{0}})$ and $\tilde{e_{k_{0}}}\in W^{u}_{\delta_{1}}(\tilde{e'}_{k_{0}})$. 
			Then we conclude that  $d(\rho(\tilde{t_{k_{0}}}),\rho(\tilde{e_{k_{0}}}))\geq \delta_1,$ that is,
			$d(\rho(T^{-k_{0}m}(\tilde{t})),\rho(T^{-k_{0}m}(\tilde{e})))\geq \delta_1$.  
			
			Therefore,
			\begin{equation*}d(t,e)=d(\rho(\tilde{t}),\rho(\tilde{e}))\geq\big( \prod_{w=1}^{k_{0}m}\lambda (t_{w})\big) d(\rho(T^{-k_{0}m}(\tilde{t})),\rho(T^{-k_{0}m}(\tilde{e})))\geq \delta_1 \big( \prod_{w=1}^{k_{0}m}\lambda  (t_{w})\big),	\end{equation*}
			for some $t_{w}=(x_{w},z_{w})\in \rho(T^{w}(\mathbb{D}(x)))$. Then, $h(t')\neq h(e') $, what means that $h|_{E_{j}}$ is injective.
			
			On the other hand, we have that $d(\rho(T^{-k_{0}m}(\tilde{t'})),\rho(T^{-k_{0}m}(\tilde{e'})))\leq 2$. Then 	\begin{equation*}d(t',e')=d(\rho(\tilde{t'}), \rho(\tilde{e'}))\leq\Big( \prod_{w=1}^{k_{0}m}\lambda(t^{*}_{w})\Big)d(\rho(T^{-k_{0}m}(\tilde{t'})),\rho(T^{-k_{0}m}(\tilde{e'})))\leq 2\big( \prod_{w=1}^{k_{0}m}\lambda(t^{*}_{w})\big),	\end{equation*}
			where $t^{*}_{w}=(x^{*}_{w},z^{*}_{w})\in \rho(T^{w}(\mathbb{D}(x')))$. We conclude that 
			\begin{equation}\label{eq.5.16}
				d(t',e')\leq \frac{2}{\delta_1} \frac{\prod_{w=1}^{k_{0}m}\lambda(t^{*}_{w})}{\prod_{w=1}^{k_{0}m}\lambda(t_{w})}   d(t,e).
			\end{equation} 	
			
			Notice that $d(x_{w},x^{*}_{w})\leq \underline{\beta}^{-w}$ and $d(z_{w},z^{*}_{w})\leq K_7\overline\lambda^{w} $ for some $K_7>0$.  Then 	\begin{equation*}d(t_{w},t^{*}_{w})\leq \underline{\beta}^{-w}+ K_7\overline\lambda^{w}.  	\end{equation*}
			So 	\begin{equation*}\lambda_{2}(t^{*}_{w})\leq \overline\lambda d(t_{w},t^{*}_{w})+\lambda(t_{w})   \leq \overline\lambda(\underline{\beta}^{-w}+ K_7 \overline\lambda_{max}^{w})+\lambda(t_{w}). 	\end{equation*}
			
			Using it in (\ref{eq.5.16}), we have  	
			\begin{equation*}
				d(t',e')\leq 
				\frac{2}{\delta_1}\prod_{w=1}^{\infty}\Big( \overline\lambda (\underline{\beta}^{-w}+ K_7\overline\lambda^{w})\underline\lambda^{-1}+1 \Big) d(t,e) \leq K_8 d(t,e) 	\end{equation*}
			for 
			$K_8 = \prod_{w=1}^{\infty}\big( \overline\lambda (\underline{\beta}^{-w}+ K_7\overline\lambda^{w}) \underline\lambda^{-1}+1 \big)$.  
			%

		\end{proof}

		\subsection{{Proof of the Theorems}} 
		
		\begin{proof}[Proof of the Theorem A]
			For every $\epsilon>0$ we can consider  $m$ large,
			$C'(m)\subset \mathbb{T}^{l} $, $\Delta'_{T}(m)\subset\pi^{ -1}(C'(m))\cap \Delta_{T}$ and $\Delta'_{T}(x')$ for some  $x'\in C'(m)$, 
			such  that 
			$$\dim_H(\rho (\Delta'_T(x')))  \geq d_0 - \epsilon	$$
			for every $x' \in C' = C'(m)$.
			For every $x\in \mathbb{T}^{l}$, consider the applications 
			$\tilde{h}:\Delta'_{T}(x')\longrightarrow  \Delta_{T}(x)$  and  $ h:=\rho \tilde{h}\rho^{-1}:\rho(\Delta'_{T}(x'))\to\rho(\Delta_{T}(x)).	$

			Partitioning $\rho( \Delta'_T(x'))=E_1 \cup \cdots \cup E_r$ as in Theorem \ref{TeoB2}, we have $\dim_H E_i \geq d_0 - \epsilon$ for some $i$, then $\dim_H \rho(\Delta_T(x)) \geq \dim h(E_i) \geq d_0 - \epsilon$.  
			Since  $\rho$ is  Lipschtiz continuous, we have 	$$d_0 - \epsilon \leq \dim_{H}\rho(\Delta_{T}(x))\leq \dim_{H}( \Delta_{T }(x)) \leq d_0	$$
			for every $x$ and $\epsilon$. 
			Therefore,
			$\dim_{H}(\Delta_{T}(x))=d_{0}$ for every $x \in \mathbb{T}^l$,
			what implies of the Theorem \ref{Teo 5.1}.

		\end{proof}

		For the proof of Corollary B, we  use the following result:
		
		\begin{theorem}\label{Teo5.1}
			Let $F$ be a measurable subset of $\mathbb{R}^n$, $E$ a measurable subset of $\mathbb{R}^{k}$, with $k < n$  and $m_{k}(E)>0$, $L$ a $(n-k)$-dimensional subspace of $\mathbb{R}^n$ and $L_{x}=L+ x$ the translation of $L$ by  $x\in E$. If
			$\dim_{H}(F \cap L_x)\geq t > 0$ for Lebesgue almost every $x\in E$, then $\dim_H (F) \geq t + \dim_H (E)$.
		\end{theorem}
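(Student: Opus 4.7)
The approach I would take is the classical Marstrand slicing argument via an Eilenberg-type inequality. First observe that the hypothesis $m_k(E) > 0$ forces $\dim_H E = k$, so the conclusion reduces to showing $\dim_H F \geq t + k$. The plan is to fix any $0 < s < t$ and prove that $\mathcal{H}^{s+k}(F) = +\infty$; letting $s \nearrow t$ then yields the desired lower bound on $\dim_H F$.

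The key technical step is the following Fubini/Eilenberg-type inequality: for every countable cover $\mathcal{U} = \{U_i\}$ of $F$ with $\diam U_i < \delta$, one has
\begin{equation}
\int^{*}_{E} \mathcal{H}^{s}_{\delta}(F \cap L_x)\, dm_k(x) \;\leq\; c_{n,k} \sum_{i} (\diam U_i)^{s+k},
\end{equation}
where $c_{n,k}$ depends only on $n$ and $k$ (the upper integral avoids any measurability issue for the slice function). To see this, let $P\colon \mathbb{R}^n \to \mathbb{R}^k$ denote the orthogonal projection along $L$. Since $P$ is $1$-Lipschitz, $P(U_i)$ sits in a Euclidean ball of radius $\diam U_i$ in $\mathbb{R}^k$, which gives $m_k(\{x \in E : L_x \cap U_i \neq \emptyset\}) \leq c_{n,k}(\diam U_i)^k$. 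For each $x \in E$, the subfamily $\{U_i : U_i \cap L_x \neq \emptyset\}$ covers $F \cap L_x$ by sets of diameter $< \delta$, so $\mathcal{H}^{s}_{\delta}(F \cap L_x) \leq \sum_{i : L_x \cap U_i \neq \emptyset} (\diam U_i)^{s}$, and integrating in $x$ and exchanging sum with integral via Tonelli yields the claim.

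Taking the infimum over all such covers $\mathcal{U}$ and then applying monotone convergence as $\delta \to 0^+$ (the integrands $\mathcal{H}^{s}_{\delta}(F \cap L_x)$ increase pointwise to $\mathcal{H}^{s}(F \cap L_x)$) produces
\begin{equation}
\int^{*}_{E} \mathcal{H}^{s}(F \cap L_x)\, dm_k(x) \;\leq\; c_{n,k}\, \mathcal{H}^{s+k}(F).
\end{equation}
By hypothesis $\mathcal{H}^{s}(F \cap L_x) = +\infty$ for a.e.\ $x \in E$, and since $m_k(E) > 0$ the left-hand side equals $+\infty$. Hence $\mathcal{H}^{s+k}(F) = +\infty$, and letting $s \nearrow t$ gives $\dim_H F \geq t + k = t + \dim_H E$.

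The main obstacle is establishing the Eilenberg inequality cleanly, especially handling measurability of the slice map $x \mapsto \mathcal{H}^{s}_{\delta}(F \cap L_x)$ and justifying the interchange of sum and integral. Working with upper integrals (or, equivalently, reducing first to the case of $F$ a Borel or even compact set via the inner regularity of Hausdorff measure) sidesteps these issues in a standard way, and the geometric estimate $m_k(P(U_i)) \leq c_{n,k}(\diam U_i)^k$ is elementary once the projection $P$ is fixed.
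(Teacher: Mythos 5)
Your proof is correct and follows essentially the same Eilenberg/Marstrand slicing route as the paper's: both derive the Fubini-type inequality $\int \mathcal{H}^{s}(F\cap L_x)\,dm_k(x) \leq c_{n,k}\,\mathcal{H}^{s+k}(F)$ by covering $F$, bounding how much the covers contribute to each slice, integrating, and passing to the limit as $\delta \to 0$. The only differences are cosmetic: you bound $m_k(\{x : L_x \cap U_i \neq \emptyset\})$ via the orthogonal projection, while the paper encloses each $U_i$ in an axis-aligned cube and computes the slice contributions directly; and you conclude immediately from $\mathcal{H}^{s}(F\cap L_x)=+\infty$ a.e.\ for $s<t$, where the paper instead runs a truncation argument with the sets $E_m=\{x:\mathcal{H}^{t-\epsilon''}(F\cap L_x)>1/m\}$ to get $\mathcal{H}^{t-\epsilon''+k}(F)>0$ — both yield the same conclusion.
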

		\begin{proof}
			Denote by $m_{d}$ the $d-$dimensional Lesbegue measure. Let us first prove that if $k\leq s\leq n$, then \begin{equation*}\int_{\mathbb{R}^{k}}\mathcal{H}^{s-k}(F\cap L_{x})\operatorname{d}x\leq \mathcal{H}^{s}(F).\end{equation*} 
			
			Given $\epsilon>0$, consider $\{U_{i}\}_{i}$ a $\delta-$cover of $F$ such that $\sum_{i}|U_{i} |^{s}\leq \mathcal{H}^{s}_{\delta}(F)+\epsilon.$ Each $U_{i}$ is contained in a $n-$cube $S_{i} $ of edge $|U_{i}|$ and with faces parallel to the canonical axes of $\mathbb{R}^{n}.$ Let $ \mathbbm{1}_{S_{i}}$ be the indicator function of $S_{i}$, that is, $\mathbbm{1}_{S_{i}}(y)=1$, if $y\in S_{i}$ and $\mathbbm{1}_{S_{i}}(y)=0$ if $y\notin S_{i}.$ For each $x\in \mathbb{R }^{k}$, the sets $S_{i}\cap L_{x}$ form a $\sqrt{(n-k)}\delta-$cover of $F\cap L_{x}.$ So, for $\delta'= \sqrt{(n-k)}\delta$, we have 
			
			\begin{align*}
				\mathcal{H}^{s-k}_{\delta'}(F\cap L_{x}) & \leq \displaystyle\sum_{i}|S_{i}\cap L_{x}|^{s-k}
				=\displaystyle\sum_{i}|S_{i}\cap L_{x}|^{s-n}|S_{i}\cap L_{x}|^{n-k}\\
				&  \leq \displaystyle\sum_{i}(\sqrt{n-k})^{s-n}|U_{i}|^{s-n}\displaystyle\int_{L_{x}}\mathbbm{1}_{S_{i}}\operatorname{d}m_{n-k}(y) .
			\end{align*}
			
			So, \begin{equation*}
				\int \mathcal{H}^{s-k}_{\delta'}(F\cap L_{x})\operatorname{d}m_{k}(x)\leq (n-k)^{\left(\frac{s-n}{2} \right) }\displaystyle\sum_{i}|U_{i}|^{s-n} \displaystyle\int \left(\displaystyle\int_{L_{x}}\mathbbm{1}_{S_{i}}\operatorname{d}m_{n-k}(y) \right)\operatorname{d}m_{k}(x). 
			\end{equation*} Note that $(n-k)^{\left(\frac{s-n}{2} \right)}\leq 1$, since $n-k\geq 1$ and $s-n\leq 0.$ Furthermore, \begin{equation*}\int \left(\int_{L_{x}}\mathbbm{1}_{S_{i}}\operatorname{d}m_{n-k}(y) \right)\operatorname{d}m_{k}(x)=\operatorname{vol}_{n}(S_{i}\cap L_{x})\leq |U_{i}|^{n}.\end{equation*} So, \begin{equation*}\int \mathcal{H}^{s-k}_{\delta'}(F\cap L_{x})\operatorname{d}m_{k}(x)\leq\displaystyle\sum_ {i}|U_{i}|^{s}\leq \mathcal{H}^{s}_{\delta}(F)+\epsilon.\end{equation*}
			
			Since $\epsilon>0$ is arbitrary and $\mathcal{H}_{\delta}^{s}$ is monotonic in $\delta$, using the Monotone Convergence Theorem, taking $\delta\to 0$, we have that
			\begin{equation}\label{eq hausd.}
				\displaystyle\int_{\mathbb{R}^{k}}\mathcal{H}^{s-k}(F\cap L_{x})\operatorname{d}x\leq \mathcal{H}^{s}(F).
			\end{equation}

			Now, let us consider $s=\dim_{H}(F)+\epsilon'$, in particular $\mathcal{H}^{s}(F)=0.$
			Note that we must have $dim_{H}(F)>k$. In fact, by hypothesis, there exists a set $E'$ of points $x$ in $E$ with positive Lebesgue measure such that $\dim_{H}(F\cap L_{x})\ge t>0$ for all $x\in E'$. Given $\epsilon''>0$, for each $m$ consider the sets 
			$E_{m}=\{x\in E':\mathcal{H}^{t-\epsilon''}(F\cap L_{x})> \frac{1}{m}\}$. We have $E_{m}\subset E_{m+1}$ and $E'\subset\displaystyle\bigcup_{m}E_{m}.$ Then, $\displaystyle\lim_{m}m_{k}(E_ {m})\geq m_{k}(E')>0$ and so, there exists $N$ such that $m_{k}(E_{N})>0$. We have  \begin{equation*}
				\int_{\mathbb{R}^{k}}\mathcal{H}^{t-\epsilon''}(F\cap L_{x})\operatorname{d}x\geq\displaystyle\int_{E_{N}}\mathcal{H}^{t-\epsilon''}(F\cap L_{x})\operatorname{d}x\geq\displaystyle\int_{E_{N}}\dfrac{1}{N}\operatorname{d}x=\dfrac{1}{N}m_{k}(E_{N}) >0
			\end{equation*} and, by  (\ref{eq hausd.}),  \begin{equation*}\int_{\mathbb{R}^{k}}\mathcal{H}^{t-\epsilon''}(F\cap L_{x})\operatorname{d}x\leq \mathcal{H}^{t-\epsilon''+k}(F),\end{equation*} it follows that $\mathcal{H}^{t-\epsilon''+k}(F)>0$, that is, $\dim_{H}(F)\geq t-\epsilon''+k$ for every $\epsilon''>0$ small.
			{Since $k \geq \dim_H(E)$, it follows what we want.}
		\end{proof}
		
		\begin{proof}[Proof of Corollary B]	
			Consider $x_{0}  \in   \mathbb{T}^{l} $ and a ball $\mathbf{B}(x_{0},r)\subset\mathbb{T}^{l}$, with $r$ small, also consider the identification of the torus $\mathbb{T}^{l}$ with $E=[0,1]^{l}\subset\mathbb{R}^{l}$. The attractor $\Delta_{T}$ is identified with a subset of $[0,1]^{l}\times \mathbb{R}^{d}\times\mathbb{R}^{p} \subset  \mathbb{R}^{l+d+p}.$
			
			Let us apply Theorem \ref{Teo5.1} for 
			$n=l+d+p$, $k=l$, $F=\Delta_{T}$, $E=\mathbf{B}(x_{0},r) $ and  $L_{x}=\{x\}\times\mathbb{R}^{d}\times\mathbb{R}^{p}=(x+\{ 0\})\times\mathbb{R}^{d}\times\mathbb{R}^{p} $ for each $x\in E$ ($L_X$ is an affine subspace of dimension $d+p=n-k$). We have that $F\cap L_{x}=\Delta_{T}(x)$ and therefore \begin{equation*}
				\dim_{H}(F\cap L_{x})=\dim_{H}(\Delta_{T}(x))\geq d_{0} \quad  \forall \  x\in E.
			\end{equation*} Thus, using Theorem \ref{Teo5.1}, it follows that \begin{equation*}
				\dim_{H}(\Delta_{T})=\dim_{H}(F)\geq d_{0}+\dim_{H}(E)=d_{0}+\dim_{H}(\mathbf{B}(x_{0},r))= l+d_{0}.
			\end{equation*} 
			This finishes the proof.

		\end{proof}
		
		Finally, let us notice the continuity of $d_0$.
		
		\begin{proof}[Proof of Corollary C]{
				Let us fix some $T_0=T(\varphi_0, \nu_0, \psi_0)$ and its codification as described in Section 2.}
			{ By structural stability, we can consider the same codification (same shift corresponding to a Markov partition with the same letters) for every $\varphi$ in a neighborhood of $\varphi_0$ and this induces the same codification in a neighborhood $\mathcal{V}$ of $T_0$ with respect to the $C^r$-topology.}
			{		Then we can consider the same shift $\sigma:\Sigma_A\to \Sigma_A$ so that  $\sigma^{-1}$ is conjugated to every $T \in \mathcal{V} $ by the function $$ h_T(\underline{\tilde{a}})=(\tau_T \circ R (\underline{\tilde{a}}^-) , S_T(\tau_T \circ R (\underline{\tilde{a}}^-) ,\underline{a})), $$ where $\underline{\tilde{a}}=(\underline{\tilde{a}}^-,\underline{{a}})$ and  we write $S_T$, $h_T$ and $\tau_T$ to emphasize their dependence on $T$.}
			
			{ We have that $\Phi: \mathcal{V} \to S_T \in C^0( \tilde M, \R)$ is continuous since it is the uniform limit of a family of contractions (here we endow $\mathcal{U}$ with the $C^r$-topology and $C^0$ with the supremum norm).} 
			
			{Denoting by $h_{\varphi_0,\varphi}$  the conjugation between the expanding maps $\varphi_0$ and $\varphi$, it is well known that it is continuous with respect to $\varphi$, so $\tau_T= h_{\varphi_0,\varphi}  \circ \tau_{T_0}$ is continuous with respect to $T$. Then $T \in \mathcal{V} \mapsto h_T \in C^0(\Sigma_A, \mathbb{T}^l \times \R^p)$ is continuous}

			{This imply that the potential   function $T \in \mathcal{V} \mapsto \phi_T \in C^0(\Sigma_A, \R)  $ is continuous, for $ 	\phi_T(\underline{\tilde{a}})=\log\lVert D_{y}\nu(h_T(\underline{\tilde{a}}))|_{Y }\rVert$. Since the pressure is continuous with respect to the potential, it follows that the pressure function $B(d, T) := P(\sigma, d \phi_T)$ is also continuous.}

			{Now we can prove that $T \mapsto d_0(T)$ is continuous. Actually, since for every $T$ there is only one $d=d_0(T)$ such that $P(d, \phi(T)) =0$, if $T_n$ converges to $T$ in the $C^r$-topology then $d_0(T_n) \in [0,p]$ have as accumulation point the unique  $d$ such that $ P(d, \phi(T)) = \lim_{n_k} P(d_0(T_{n_k}), \phi(T_{n_k}))=0$, so $d=d_0(T)$.
			}
			
			{ When $p=1$ the set $\mathcal{U}$ is $C^1$-open because every $\nu: \mathbb{T}^l \times \R \to \R$ is conformal (condition \eqref{eq 2.1.1} is always valid for $p=1$), and the other conditions are always $C^1$-open.
			}
		\end{proof}

		
\end{document}